\documentclass[reqno]{amsart}
\usepackage{amsmath}
\usepackage{amsfonts}
\usepackage{amsthm}
\usepackage{amssymb}
\usepackage{verbatim}
\usepackage{enumerate}
\usepackage{graphicx}
\usepackage{longtable}
\usepackage[all,cmtip]{xy}
\usepackage{upref}
\usepackage{fixltx2e}
\usepackage{hyperref}
\usepackage[hang,small,bf]{caption}
\usepackage{tikz}

\topmargin=-1cm
\oddsidemargin=-0.5cm
\textwidth=16.6cm
\textheight=23cm

\newtheorem{defn}{Definition}[section]

\newtheorem{teo*}{Theorem}
\newtheorem{cor*}{Corollary}

\newtheorem{prop}[defn]{Proposition}
\newtheorem{oss}[defn]{Remark}
\newtheorem{lemma}[defn]{Lemma}
\newtheorem{cor}[defn]{Corollary}

\newtheorem*{question}{Question}
\newcommand{\bigslant}[2]{{\raisebox{.2em}{$#1$}\left/\raisebox{-.2em}{$#2$}\right.}}

\numberwithin{equation}{section}

\setlength{\textwidth}{152mm}
\setlength{\evensidemargin}{3mm}
\setlength{\oddsidemargin}{5mm}

\renewcommand{\theta}{\vartheta}

\newcommand{\T}{{\mathbb T}}
\newcommand{\Z}{{\mathbb Z}}
\newcommand{\R}{{\mathbb R}}
\newcommand{\N}{{\mathbb N}}

\newcommand{\A}{{\mathbb A}}
\newcommand{\HH}{{\mathbb H}}

\renewcommand{\phi}{\varphi}

\hyphenation{as-so-cia-ted}

\begin{document}

\title{On the existence of Euler-Lagrange orbits satisfying the conormal boundary conditions.}
\author{Luca Asselle}
\address{Ruhr Universit\"at Bochum, Fakult\"at f\"ur Mathematik\newline\indent Geb\"aude NA 4/33, D-44801 Bochum, Germany}
\email{luca.asselle@ruhr-uni-bochum.de}
\date{September 7, 2016}
\subjclass[2000]{37J45, 58E05}
\keywords{Tonelli Lagrangians, conormal bundles, Ma\~n\'e critical values}

\begin{abstract}
Let $(M,g)$ be a closed connected Riemannian manifold, $L:TM\rightarrow \R$ be a Tonelli Lagrangian. Given two closed submanifolds $Q_0,Q_1\subseteq M$ and a real number $k$, we study the existence of Euler-Lagrange orbits with energy $k$ 
connecting $Q_0$ to $Q_1$ and satisfying suitable boundary conditions, known as \textit{conormal boundary conditions}. We introduce the Ma\~n\'e critical value which is relevant for this problem and discuss existence results for supercritical and subcritical energies. 
We also provide counterexamples showing that all the results are sharp.
\end{abstract}

\maketitle

%%%%%%%%%%%%%%%%%%%%%%%%%%%%%%%%%%%%%%%%%%%%%%%

\section{Introduction}
\label{introduction}

Let $(M,g)$ be a closed connected Riemannian manifold and let $L:TM\rightarrow \R$ be a Tonelli Lagrangian (that is a smooth fiberwise $C^2$-strictly convex and superlinear function). The Euler-Lagrange equation, which in local coordinates is given by 
$$\frac{d}{dt} \frac{\partial L}{\partial v}(\gamma,\dot \gamma) - \frac{\partial L}{\partial q} (\gamma,\dot \gamma) = 0 ,$$
gives rise to a flow on $TM$, known as the \textit{Euler-Lagrange flow}. The energy function
$$E(q,v) =  d_vL(q,v)\cdot v - L(q,v)$$ 
associated with $L$ is a prime integral of the motion, meaning that it is constant along solutions of the Euler-Lagrange equation. Moreover, $E$ is Tonelli and attains its minimum at $v=0$;
in particular, the energy level sets $E^{-1}(k)$ are compact and invariant under the Euler-Lagrange flow, which therefore turns out to be complete on $TM$. Here we are interested in the following

\begin{question}
Given two non-empty closed submanifolds $Q_0,Q_1\subseteq M$, for which $k\in \R$ does there exist an Euler-Lagrange orbit $\gamma$ with energy $k$ and satisfying  the \textit{conormal boundary conditions}?
\end{question}

Without loss of generality we may suppose $Q_0,Q_1$ connected. Recall that an Euler-Lagrange orbit $\gamma:[0,R]\rightarrow M$ is said to satisfy the \textit{conormal boundary conditions} if
\begin{equation}
\left\{ \begin{array}{l} \gamma(0)\in Q_0, \ \gamma(R) \in Q_1, \\ d_vL(\gamma(0),\dot \gamma(0)) \Big |_{T_{\gamma(0)}Q_0} = 0 , \\  d_vL(\gamma(R),\dot \gamma(R)) \Big |_{T_{\gamma(R)}Q_1}= 0.\end{array}\right .
\label{cbc}
\end{equation}

In the case of geodesic flows (i.e.~when $L$ is just the kinetic energy defined by $g$), one is simply requiring that $\gamma$ is a geodesic hitting $Q_0$ and $Q_1$ orthogonally.
For sake of conciseness, throughout the paper we will call solutions of \eqref{cbc} simply \textit{connecting orbits}. 

The question above can also be formulated in the Hamiltonian setting. Let  $H:T^*M\rightarrow \R$ be the Tonelli Hamiltonian given by the Fenchel dual of $L$ 
\begin{equation}
H(q,p) = \max_{v\in T_qM} \Big [\langle p,v\rangle_q - L(q,v)\Big ],
\label{magnetichamiltonian}
\end{equation}
where $\langle \cdot,\cdot \rangle$ denotes the duality pairing between tangent and cotangent bundle. For which $k\in \R$ does $H^{-1}(k)$ carry a Hamiltonian orbit $u:[0,R]\rightarrow T^*M$ with
$$u(0) \in N^*Q_0 , \quad u(R) \in N^*Q_1 ?$$
Here, for $i=0,1$, $N^*Q_i$ is the \textit{conormal bundle} of $Q_i$  
$$N^*Q_i := \Big \{(q,p)\in T^* M \ \Big | \ q\in Q_i, \ T_q Q_i\subseteq \ker p \Big \} .$$
We refer to  \cite{AS09}, \cite{Dui76} or \cite[Section 6.4]{Hör90} for general facts and properties of conormal bundles.

\begin{oss} It follows from the Hamiltonian formulation that a necessary condition for the existence of connecting orbits is that the energy level set $H^{-1}(k)$ intersects both the conormal bundles of $Q_0$ and $Q_1$, namely 
\begin{equation}
H^{-1}(k) \cap N^*Q_i \neq \emptyset , \quad \text{for} \ i=0,1 .
\label{necessaryham}
\end{equation}
We set 
$$k(L;Q_0,Q_1):=\inf \Big \{k\in \R \ \Big |\ \eqref{necessaryham} \text{ holds}\Big \}.$$

The above observation can be phrased by saying that $k\geq k(L;Q_0,Q_1)$ is a necessary condition for the existence of connecting orbits.
This condition alone is however not sufficient, as we will show in Section \ref{counterexamples}.
\end{oss}

\vspace{2mm}

A particular class of Tonelli Lagrangians is given by the so-called \textit{magnetic Lagrangians}, i.e. smooth functions on $TM$ of the form 
\begin{equation}
L(q,v) =  \frac12 |v|^2 + \theta_q(v),
\label{magneticlag}
\end{equation}
where $|\cdot |$ is the norm induced by the Riemannian metric $g$ and $\theta$ is a smooth one-form on $M$. The reason for this terminology is that they can be thought of as 
modelling the motion of a unitary mass and charge particle under the effect of the magnetic field $\sigma = d\theta$.

In the Lagrangian setting, condition \eqref{necessaryham} for a magnetic Lagrangian is expressed by 
\begin{equation}
k \geq \max \left \{ \min_{q\in Q_0} \frac12  |\mathcal P_0w_q|^2, \min_{q\in Q_1} \frac12 |\mathcal P_1w_q|^2 \right \} , 
\label{necessarylag}
\end{equation}
where $\mathcal P_i:TM|_{Q_i}\rightarrow TQ_i$ denotes the orthogonal projection and $w_q\in T_qM$ is the unique tangent vector representing $\theta_q\in T^*_qM$. The right-hand side of \eqref{necessarylag} is precisely
$k(L;Q_0,Q_1)$; if it is non-zero, then we cannot expect existence of connecting orbits for every positive energy, even if the submanifolds intersect or if $Q_0=Q_1$.

\begin{oss}
When $Q_0=\{q_0\}$ and $Q_1=\{q_1\}$ are points in $M$, the question above reduces to the problem of finding those energy levels which contain Euler-Lagrange orbits connecting $q_0$ and $q_1$. This problem has 
an easy answer when $L$ is a mechanical Lagrangian, i.e. of the form
\begin{equation}
L(q,v) = \frac12 |v|^2 - V(q),
\label{mechaniclag}
\end{equation}
with $V$ smooth function on $M$ (potential energy), but is made extremely hard by the presence of a magnetic potential $\theta$ (see e.g. \cite[Chapter I.3 and Appendix F]{Gli97}). We will get back on this later on in this introduction and 
in the last section.
\end{oss}

\begin{oss}
The conormal boundary conditions \eqref{cbc} make also sense for submanifolds of $M\times M$ which are not necessarily of the form $Q_0\times Q_1$. In this sense, the problem of finding periodic orbits of the Euler-Lagrange 
flow can be viewed as the problem of finding Euler-Lagrange orbits satisfying the conormal boundary conditions for $\Delta \subseteq M\times M$ diagonal.
\end{oss}

The key fact that will be exploited throughout this paper is that connecting orbits with energy $k$ correspond to the critical points of the \textit{free-time Lagrangian action functional}
$$\A_k:\mathcal M_Q \longrightarrow \R, \quad \A_k(x,T)= T \int_0^1 \Big [L\Big (x(s),\frac{ x'(s)}{T} \Big ) + k \Big ]\, ds ,$$
where $Q=Q_0\times Q_1$ and $\mathcal M_Q =H^1_Q([0,1],M)\times (0,+\infty)$ is the Hilbert manifold of $H^1$-paths connecting $Q_0$ with $Q_1$ with arbitrary interval of definition. When $Q_0\cap Q_1\neq \emptyset$, we identify $Q_0\cap Q_1$ with the subset of $\mathcal M_Q$ made of constant loops with values in $Q_0\cap Q_1$.

Notice that $\A_k$ is well-defined only 
under the additional assumption that $L$ is quadratic at infinity; this is however not a problem for our purpose. Indeed, since the energy level $E^{-1}(k)$ is compact, we can always modify $L$ outside it to achieve the quadratic growth condition.
Hereafter all the Lagrangians will be thus supposed without loss of generality to be quadratic at infinity.

The goal of the present work will be to see under which assumptions the existence of critical points for $\A_k$ is guaranteed. It is clear that in this study a crucial role will be played by the analytical (e.g. ``compactness'' and the Palais-Smale condition) and geometric properties (e.g. boundedness or the 
presence of a mountain-pass geometry) of $\A_k$, as well as by the topological properties of the space $\mathcal M_Q$. However, if on the one hand the topology of $\mathcal M_Q$ clearly do not depend on $k$, on the other hand 
the properties of $\A_k$ change drastically when crossing a suitable energy value. This is actually no surprise, since also the dynamical and geometric properties of the Euler-Lagrange flow change when crossing suitable
\textit{Ma\~n\'e critical values} (cf. \cite{Abb13,Con06}). 

In general, the critical points for $\A_k$ one might expect to find are either 
\begin{itemize}
\item global (or local) minimizers,
\end{itemize}
or 
\begin{itemize}
\item mountain passes (or more generally minimax critical points).
\end{itemize}

\noindent The questions one has to address are therefore the following 

\begin{enumerate}
\item For which $k$ is $\A_k$ bounded from below on the connected components of $\mathcal M_Q$? And for those values of $k$, on which connected components of $\mathcal M_Q$ does then $\A_k$ admit minimizers? 
\vspace{2mm}
\item Assume that the topology of $\mathcal M_Q$ (or the geometry of $\A_k$) allows to define a suitable minimax class. For which $k$ does this then yield existence of critical points for $\A_k$?
\end{enumerate}

We shall however observe already at this point that a general existence result of critical points for $\A_k$ cannot be obtained since there are examples of Euler-Lagrange flows and of submanifolds $Q_0,Q_1$ for which there are no connecting orbits. Consider for instance the geodesic
flow on the flat Torus $(\T^2,g_{flat})$ and pick $Q_0,Q_1$ as in Figure \ref{figure1} below.

\begin{figure}[h]
\begin{center}
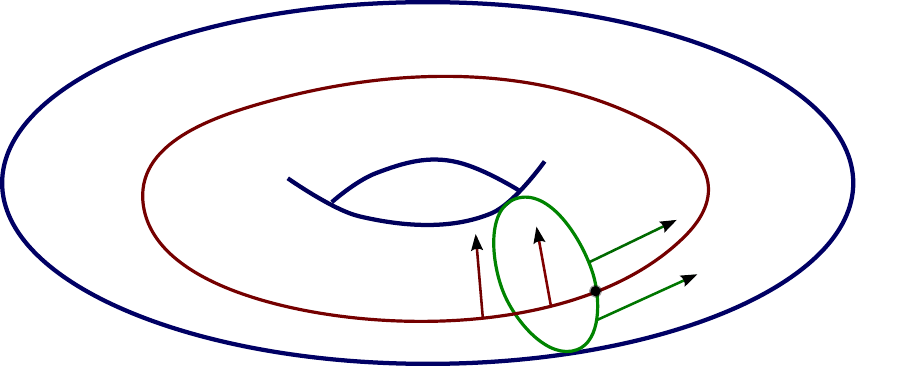
\end{center}
\caption{Existence of connecting orbits might fail in full generality.}
\label{figure1}
\end{figure}

The conormal boundary conditions \eqref{cbc} imply that geodesics connecting $Q_0$ with $Q_1$ have to hit both $Q_0$ and $Q_1$ orthogonally, which is not possible (except for the constant solution in the intersection point $q_0$). It follows that for every $k>0$ there 
are no geodesics connecting $Q_0$ with $Q_1$ and satisfying the conormal boundary conditions. From a variational viewpoint, what goes wrong in this example is that $\mathcal M_Q$ is connected, contains constant paths and $\pi_l(\mathcal M_Q,\{q\})$ is trivial for every $l\in \N$, for $\mathcal M_Q$ is contractible. This implies that 
$\A_k$ has infimum zero on $\mathcal M_Q$, for every $k>0$, and this is not attained. Also, one cannot expect to find minimax critical points, since there are no non-trivial minimax classes to play with.

This is however (under some mild assumption on the intersection if $Q_0\cap Q_1\neq \emptyset$) the only possible counterexample, at least if $k$ is ``sufficiently large'' as we now briefly explain. Let $G<\pi_1(M)$ be the smallest normal subgroup containing both $\imath_*(\pi_1(Q_0))$ 
and $\imath_*(\pi_1(Q_1))$, where $\imath:Q_i\hookrightarrow M$ is the canonical inclusion; consider the cover $M'=\widetilde M/G$, with $\widetilde M$ universal cover of $M$, and define
\begin{equation}
c(L;Q_0,Q_1):= \inf_{u\in C^\infty(M')} \sup_{q\in M'} \ H'(q,d_qu),
\label{clq0q1intro}
\end{equation}
where $H'$ is the lift to $M'$ of the Hamiltonian $H:T^*M\rightarrow \R$ associated with $L$. 

\begin{oss}
We have $c(L;Q_0,Q_1)\geq k(L;Q_0,Q_1)$ (this will be proved in Proposition \ref{klminorecl}). Moreover, if $Q_0=\{q_0\}$, $Q_1=\{q_1\}$, then 
\begin{align*}
k(L;\{q_0\},\{q_1\}) =\max \{E(q_1,0),E(q_2,0)\},\quad c(L;\{q_0\},\{q_1\}) =c_u(L).
\end{align*} 
Here $c_u(L)$ is the Ma\~n\'e critical value of the universal cover and is defined as in \eqref{clq0q1intro} replacing $M'$ with the universal cover 
of $M$.
\end{oss}

In Section \ref{themanecriticalvalueclq0q1} we will show that, for $k\geq c(L;Q_0,Q_1)$, $\A_k$ is bounded from below on each connected component of $\mathcal M_Q$ and it is unbounded from below on each connected component otherwise.
Furthermore, for $k> c(L;Q_0,Q_1)$ every Palais-Smale sequence for $\A_k$ with times bounded away from zero admits converging subsequences. These facts will allow us in Section 
\ref{existenceresultsforhighenergies} to prove the following:

\begin{teo*}
Let $\mathcal N$ be a connected component of $\mathcal M_Q$; then we have:
\begin{enumerate}
\item If $\mathcal N$ does not contain constant paths, then for all $k>c(L;Q_0,Q_1)$ there exists a global minimizer of $\A_k|_{\mathcal N}$.
\item Suppose now that $\mathcal N$ contains constant paths and define 
$$k_{\mathcal N}(L):= \sup \Big \{ k\in \R \ \Big |\ \inf_{\mathcal N} \A_k < 0 \Big \} \in [c(L;Q_0,Q_1),+\infty).$$
Then the following hold:
\begin{enumerate}
\item  For all $k\in (c(L;Q_0,Q_1), k_{\mathcal N}(L))$ there is a global minimizer of $\A_k|_{\mathcal N}$. 
\item If $Q_0\cap Q_1$ is connected, $\mathcal N$ has the retraction property (see the beginning of Section \ref{existenceresultsforhighenergies} for 
the definition), and $\pi_l(\mathcal N,Q_0\cap Q_1)\neq 0$ for some $l\geq 1$, then for all $k>k_{\mathcal N}(L)$ there is a minimax critical point for $\A_k|_{\mathcal N}$.
\item If $Q_0\cap Q_1$ is not connected and at least one of its connected component is isolated, then for all $k>k_{\mathcal N}(L)$ there exists a minimax critical point for $\A_k|_{\mathcal N}$.
\end{enumerate}
\end{enumerate}
\label{teosupercriticalintro}
\end{teo*}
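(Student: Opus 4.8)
The plan is, in all cases, to produce a critical point of $\A_k|_{\mathcal N}$, which by the variational characterization recalled in the introduction is a connecting orbit with energy $k$. I will rely on the two analytic facts established in Section \ref{themanecriticalvalueclq0q1}: for $k\ge c:=c(L;Q_0,Q_1)$ the functional $\A_k$ is bounded from below on $\mathcal N$, with a coercivity estimate that is linear in the time variable; and for $k>c$ every Palais--Smale sequence whose times stay bounded away from $0$ subconverges. Writing $e_0(q):=E(q,0)$, I note at the outset that boundedness from below applied to the constant paths $(q,T)$ with $q\in Q_0\cap Q_1$, for which $\A_k(q,T)=T\big(k-e_0(q)\big)$, forces $k\ge e_0(q)$, hence $k>e_0(q)$ strictly whenever $k>c$; in particular $\A_k$ has no constant critical points when $k>c$.

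For part (1) I would argue by direct minimization: take a minimizing sequence for $\A_k|_{\mathcal N}$, make it Palais--Smale via Ekeland's variational principle, bound the times from above by the coercivity estimate, and bound them away from $0$ using that $\mathcal N$ contains no constant paths --- so the $H^1$-paths in $\mathcal N$ have length bounded below by some $\ell_0>0$, whence $\A_k(x_n,T_n)\ge \ell_0^2/(4T_n)+T_n(k-C)$ from $L(q,v)\ge\frac14|v|^2-C$ --- and then apply the Palais--Smale condition. For part (2)(a) I would run the same scheme for $k\in(c,k_{\mathcal N}(L))$, where $\inf_{\mathcal N}\A_k<0$ (by monotonicity of $k\mapsto\A_k$ and the definition of $k_{\mathcal N}(L)$) while every constant path has non-negative action, so the minimizer obtained is automatically non-constant. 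The only new point is to exclude $T_n\to0$ when the lengths of the $x_n$ also tend to $0$: then $x_n$ converges uniformly to a constant at some $q_*\in Q_0\cap Q_1$, and fiberwise convexity gives
\[
\A_k(x_n,T_n)\ \ge\ \int_{x_n} d_vL(\cdot,0)\ +\ T_n\int_0^1\big(k-e_0(x_n)\big)\,ds\ \ge\ -\|d_vL(\cdot,0)\|_\infty\cdot\mathrm{length}(x_n)\ \longrightarrow\ 0
\]
(using $k>e_0$ near $q_*$), contradicting $\A_k(x_n,T_n)\to\inf_{\mathcal N}\A_k<0$.

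In parts (2)(b) and (2)(c) we have $k>k_{\mathcal N}(L)$, so $\inf_{\mathcal N}\A_k=0$; this is not attained unless $\A_k|_{\mathcal N}$ has a non-constant minimizer, which would already be a connecting orbit, so I may assume there is none and set up a minimax. In case (b) the non-trivial class in $\pi_l(\mathcal N,Q_0\cap Q_1)$ yields a class $[f]$ of maps of pairs $(D^l,S^{l-1})\to(\mathcal N,Q_0\cap Q_1)$, and I would take $c_k:=\inf_{[g]=[f]}\max_{D^l}\A_k\circ g$: finiteness holds for any representative, and $c_k>0$ because the retraction property makes a small sublevel set $\{\A_k\le\epsilon\}\cap\mathcal N$ deformation retract onto $Q_0\cap Q_1$, so a representative with $\max\A_k\circ g$ small would be homotopic through maps of pairs to a map into $Q_0\cap Q_1$ --- impossible since $[f]\neq0$. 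In case (c) I would use instead that the connected $\mathcal N$ contains constant paths based at two distinct components of $Q_0\cap Q_1$, one of them isolated, and let $c_k$ be the mountain-pass value over continuous paths in $\mathcal N$ joining the two corresponding ``valleys''; isolation forces any such path to cross a level $\ge\epsilon_0>0$, so $c_k>0$, while $c_k<+\infty$ by connectedness.

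In both cases it remains, by the standard deformation lemma, to verify the Palais--Smale condition at the positive level $c_k$; the only part not already granted is that such a sequence $(x_n,T_n)$ cannot have $T_n\to0$, and this is the step I expect to be the main obstacle. If $T_n\to0$, then $\|x_n'\|_{L^2}^2\le 4T_n\big(\A_k(x_n,T_n)-T_n(k-C)\big)\to0$, so $x_n$ converges to a constant in $H^1$; the $\partial_T$-component of $d\A_k(x_n,T_n)\to0$ reads $\int_0^1 E(x_n,x_n'/T_n)\,ds\to k$, which by the quadratic growth of $E$ in $v$ bounds $\|x_n'/T_n\|_{L^2}$; writing $L=d_vL\cdot v-E$ with $v=x_n'/T_n$ and using $|d_vL(q,v)|\le C(1+|v|)$, one then gets $\A_k(x_n,T_n)=\int_0^1 d_vL(x_n,x_n'/T_n)\cdot x_n'\,ds+T_n\big(k-\int_0^1 E(x_n,x_n'/T_n)\,ds\big)\to0$, contradicting $\A_k(x_n,T_n)\to c_k>0$. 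The critical point produced by the minimax thus has positive action, hence is non-constant and is a genuine solution of \eqref{cbc}. Besides this compactness issue, the only delicate points are the bookkeeping in (b) and (c) needed to turn the retraction property and the isolation hypothesis into minimax classes that are non-trivial strictly above the level $0$; the rest --- Ekeland's principle, the deformation lemma, and the identification of critical points of $\A_k$ with Euler--Lagrange orbits satisfying the conormal boundary conditions --- is routine.
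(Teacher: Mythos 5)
Your overall strategy coincides with the paper's: direct minimization on complete sublevels for (1) and (2)(a), and a Lusternik--Fet type minimax over relative homotopy classes for (2)(b)--(c), with positivity of the minimax level extracted from the retraction property (resp.\ from the isolation of a component of $Q_0\cap Q_1$) and compactness restored by excluding Palais--Smale sequences with $T_n\to 0$ at a nonzero level; your computation for the latter is essentially a re-proof of the paper's Lemma \ref{yesmodification}, and your positivity argument (``representatives with uniformly small action would be deformable into $Q_0\cap Q_1$ via the retraction'') is the same input the paper uses in the form ``every representative contains a path of length at least $\lambda$, hence some $(x,T)$ in its image has $T\geq T_0$ and $\A_k(x,T)=\A_{k_{\mathcal N}(L)}(x,T)+(k-k_{\mathcal N}(L))T\geq (k-k_{\mathcal N}(L))T_0$''. (For your version you still need to check that $\{\A_k\le\epsilon\}\cap\mathcal N$ lies in the retraction neighborhood, which uses precisely $\A_k\geq(k-k_{\mathcal N}(L))T$ together with $\A_k\geq \frac aT\|x'\|_2^2+T(k-b)$; you flag this as bookkeeping, and it is of the same nature as the paper's step.)

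The one point you pass over that the paper treats as a genuine issue is the legitimacy of the ``standard deformation lemma'' itself. The manifold $\mathcal M_Q$ and the positive sublevels of $\A_k$ in $\mathcal N$ are \emph{not} complete: negative gradient flow lines may reach $T=0$ in finite time, so the deformation of the minimax class used to produce a Palais--Smale sequence at level $c_k>0$ is not automatic, and verifying the Palais--Smale condition at $c_k$ (which is what you do) does not by itself repair this. The paper's fix is Lemma \ref{convergenza}: along any negative gradient flow line with $\liminf T=0$ the action tends to $0<c_k$, so after replacing $-\nabla\A_k$ by a bounded, truncated vector field vanishing below level $c_k/2$ the relevant flow is complete, and one can invoke a minimax theorem adapted to this setting (\cite[Theorem 1.8 and Remark 1.11]{Abb13}). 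Your estimate for Palais--Smale sequences with $T_n\to0$ can be adapted to flow lines (this is how the paper proves Lemma \ref{convergenza}, via the sign of $\partial\A_k/\partial T$ along the flow), but as written this step is missing. The analogous, milder remark applies to your use of Ekeland's principle in (1) and (2)(a): it must be applied on a complete sublevel, which is exactly what Lemma \ref{completeness0} and Corollary \ref{cornegative} provide (no constant paths, resp.\ negative level); your length and $\liminf$ estimates are the right ones, but the completeness statement should be made explicit.
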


In the following we refer to \textit{supercritical energies} whenever $k>c(L;Q_0,Q_1)$ and to \textit{subcritical energies} whenever $k\in (k(L;Q_0,Q_1),c(L;Q_0,Q_1))$. 

A particular case of intersecting submanifolds is given by the choice $Q_0=Q_1$, which corresponds to (a particular case of) the \textit{Arnold chord conjecture} 
about the existence of a Reeb orbit starting and ending at a given Legendrian submanifold of a contact manifold, see \cite{Arn86, Moh01}, but in a possibly \textit{virtually} contact situation (c.f. introduction of \cite{CFP10} for the definition), since in general $c(L;Q_0,Q_0)$ might be strictly lower than $c_0(L)$. Recall indeed that energy levels 
above $c_u(L)$ are virtually contact (cf. \cite[Lemma 5.1]{CFP10}), however they are known to be not of contact type if $c_u(L)<k<c_0(L)$ (cf. \cite[Proposition B.1]{Con06}). 
Here $c_0(L)$ is defined as in \eqref{clq0q1intro} replacing $M'$ with the abelian cover of $M$ and is called the \textit{Ma\~n\'e critical value 
of the abelian cover}. 

In our setting an \textit{Arnold chord} is simply an Euler-Lagrange orbits starting and ending at $Q_0$ and satisfying the conormal boundary conditions.

\begin{cor*}
Let $Q_0\subseteq M$ be a non-empty closed connected submanifold and define $c(L;Q_0)$ as in (\ref{clq0q1intro}) just by setting $Q_0=Q_1$. Then the following hold:
\begin{enumerate}
\item For every $k>c(L;Q_0)$ and for every connected component of $\mathcal M_Q$ that does not contain constant paths there exists an Arnold chord 
with energy $k$ which is a global minimizer of $\A_k$ among its connected component.
\item Let $\mathcal N$ be the connected component of $\mathcal M_Q$ containing the constant paths. 
For every $k\in (c(L;Q_0), k_{\mathcal N} (L))$ there is an Arnold chord with energy $k$ which is a global minimizer of $\A_k$ on $\mathcal N$. 
Moreover, if $\pi_l(M,Q_0)\neq \{0\}$ for some $l\geq 2$, then for every $k> k_{\mathcal N}(L)$ there exists an Arnold chord in $\mathcal N$ with energy $k$.
\end{enumerate}

In particular, if $Q_0\neq M$, then for all $k>c(L;Q_0)$, $k\neq k_{\mathcal N}(L)$, there is an Arnold chord with energy $k$.
\label{arnoldchordsupercriticalintro}
\end{cor*}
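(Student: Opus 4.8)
The plan is to deduce the Corollary as the special case $Q_0 = Q_1$ of the main Theorem, checking in each item that the extra hypotheses of the Corollary imply the relevant hypotheses of the Theorem. First I would observe that with $Q = Q_0 \times Q_0$, the space $\mathcal M_Q$ decomposes into connected components, and a critical point of $\mathcal A_k|_{\mathcal N}$ is, by the variational characterization recalled in the introduction (connecting orbits $\leftrightarrow$ critical points of the free-time action), precisely an Euler-Lagrange orbit of energy $k$ starting and ending on $Q_0$ and satisfying the conormal boundary conditions \eqref{cbc} — i.e.\ an Arnold chord. So every assertion about existence of critical points of $\mathcal A_k$ translates verbatim into an assertion about Arnold chords.

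For item (1): a connected component $\mathcal N$ not containing constant paths falls under case (1) of the Theorem, which gives a global minimizer of $\mathcal A_k|_{\mathcal N}$ for every $k > c(L;Q_0,Q_1) = c(L;Q_0)$; nothing further is needed. For item (2), let $\mathcal N$ be the component containing the constant paths. Since $Q_0 = Q_1$, the intersection $Q_0 \cap Q_1 = Q_0$ is connected (we assumed $Q_0$ connected), so we are in case (2) of the Theorem with $Q_0 \cap Q_1$ connected. Part (2a) gives a global minimizing Arnold chord for $k \in (c(L;Q_0), k_{\mathcal N}(L))$. For $k > k_{\mathcal N}(L)$, I would invoke part (2b): here one must supply the hypothesis $\pi_l(\mathcal N, Q_0 \cap Q_1) \neq 0$ for some $l \geq 1$, together with the retraction property of $\mathcal N$. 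The Corollary instead assumes $\pi_l(M,Q_0) \neq \{0\}$ for some $l \geq 2$; so the work here is to argue that $\pi_l(\mathcal N, Q_0) \cong \pi_l(M,Q_0)$ (or at least that nonvanishing of the latter forces nonvanishing of the former in some degree $\geq 1$) and that $\mathcal N$ — the component of based/constant paths — automatically has the retraction property. The standard fibration-type argument identifies the space of $H^1$ paths from $Q_0$ to $Q_0$ lying in the component of constants with a space homotopy equivalent to $M$ relative to $Q_0$ (evaluation at the endpoint, say, with contractible fibers after restricting to the null-homotopic component), yielding the isomorphism of relative homotopy groups; and the constant-path component retracts onto $Q_0$ by shrinking paths, giving the retraction property for free.

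The main obstacle is precisely this homotopy-theoretic bookkeeping in item (2): matching the "intrinsic" hypothesis $\pi_l(M,Q_0)\neq 0$, $l \geq 2$, of the Corollary with the "path-space" hypothesis $\pi_l(\mathcal N, Q_0\cap Q_1) \neq 0$, $l \geq 1$, of the Theorem, and verifying the retraction property. The degree shift ($l \geq 2$ versus $l \geq 1$) is the expected source of friction: one uses that $\mathcal N$ deformation retracts to (a model of) $M$ rel $Q_0$ and that $\pi_l$ of a path/loop-type space in degree $l$ corresponds to $\pi_{l+1}$ of the target rel the submanifold — although here, since we take \emph{paths} with free (conormal) endpoints rather than loops, the correct statement is $\pi_l(\mathcal N,Q_0) \cong \pi_l(M,Q_0)$ directly, with no shift, so the condition $l \geq 2$ in the Corollary is in fact stronger than needed and immediately supplies an admissible $l \geq 1$ for the Theorem. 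Once this identification is in place, parts (2a)–(2b) of the Theorem close out item (2), and the final clause ("if $Q_0 \neq M$, then for all $k > c(L;Q_0)$, $k \neq k_{\mathcal N}(L)$, there is an Arnold chord") follows by combining: on components without constant paths use (1); on $\mathcal N$ use (2a) below $k_{\mathcal N}(L)$ and (2b) above it, the latter being applicable because $Q_0 \neq M$ forces $\pi_l(M,Q_0) \neq 0$ for some $l$ (as the inclusion $Q_0 \hookrightarrow M$ is then not a homotopy equivalence, so some relative homotopy group is nontrivial) — and if that nontrivial degree happens to be $l = 1$ one reduces instead to the $k < k_{\mathcal N}(L)$ regime or argues directly, which is the one remaining edge case to dispatch.
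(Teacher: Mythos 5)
Your overall strategy coincides with the paper's (specialize Theorem \ref{teosupercriticalintro} to $Q_0=Q_1$, and translate critical points into Arnold chords), and items (1) and the first half of (2) are handled exactly as in the paper. The problem is the key homotopy-theoretic step for the second half of item (2): you assert $\pi_l(\mathcal N,Q_0)\cong\pi_l(M,Q_0)$ ``with no shift'', and conclude that the hypothesis $l\geq 2$ is stronger than needed. This identification is false. The correct one, which is precisely what the paper invokes, is $\pi_l(\mathcal N,Q_0)\cong\pi_{l+1}(M,Q_0)$ for every $l\geq 0$: evaluation at the initial point fibres the space of paths with both endpoints on $Q_0$ over $Q_0$, with fibre the space of paths from a fixed $q_0$ to $Q_0$ (whose $\pi_l$ is by definition $\pi_{l+1}(M,Q_0,q_0)$), and the constant paths provide a section. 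A concrete check: for $M=S^2$ and $Q_0$ a point, $\mathcal N\simeq \Omega S^2$, so $\pi_1(\mathcal N,Q_0)\cong\pi_2(S^2)\neq 0$ while $\pi_1(M,Q_0)=0$. Hence $l\geq 2$ in the Corollary is exactly calibrated to produce a nonzero $\pi_{l-1}(\mathcal N,Q_0)$ with $l-1\geq 1$; a nonzero $\pi_1(M,Q_0)$ alone yields no admissible minimax class in $\mathcal N$ (it only reflects the existence of further components of $\mathcal M_Q$). Relatedly, your justification of the retraction property (``the constant-path component retracts onto $Q_0$ by shrinking paths'') cannot be correct as stated: if all of $\mathcal N$ retracted onto $Q_0$, all the groups $\pi_l(\mathcal N,Q_0)$ would vanish. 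What is true, and what the paper's remark preceding the proof of the Theorem records, is that a \emph{neighborhood} of the constant paths retracts onto $Q_0$, because for $Q_0=Q_1$ there are no arbitrarily short geodesics meeting $Q_0$ orthogonally at both endpoints; this is what gives the retraction property.

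The same misidentification leaves the final clause unproved: you explicitly defer the ``edge case'' in which the only nontrivial relative homotopy of $(M,Q_0)$ sits in degree $1$, and with the correct shift this is exactly the case in which the minimax statement 2(b) is unavailable. The paper disposes of it by a dichotomy you would need to reproduce: either $\mathcal M_Q$ has a connected component containing no constant paths, and then item (1) already provides chords for every $k>c(L;Q_0)$; or $\mathcal N$ is the only component, in which case $\pi_0(M,Q_0)$ and $\pi_1(M,Q_0)$ vanish, so if in addition $\pi_l(\mathcal N,Q_0)=0$ for all $l\geq 1$ then $\pi_l(M,Q_0)=0$ for all $l\geq 0$, whence by Hurewicz $H_l(M,Q_0)=0$ and $H_l(M)\cong H_l(Q_0)$ for all $l$, contradicting $Q_0\neq M$ in degree $l=\dim M$. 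Without the shift isomorphism and this dichotomy, the last sentence of the Corollary does not follow from your argument.
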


Existence results for subcritical energies are harder to achieve than the corresponding ones for supercritical energies and the reason for that are of various nature. 

First, when $k<c(L;Q_0,Q_1)$, $\A_k$ could have Palais-Smale sequences with times going to infinity. In fact, the lack of the Palais-Smale condition for subcritical energies is ultimately 
responsible for the fact that one gets existence results which hold only for \textit{almost every energy} in a suitable range of subcritical energies. 

Second, in case the intersection $Q_0\cap Q_1$ is empty the problem might have no solutions for every $k\in (k(L;Q_0,Q_1),c(L;Q_0,Q_1))$ as it contains, as a very special case, the problem of finding the energy levels for which any two points in $M$ can be joined by an Euler-Lagrange 
orbit. In this direction it has been proven by Ma\~ne in \cite[Page 151]{Man96} that, for every $k>c_0(L)$, every pair of points in $M$ can be joined by an Euler-Lagrange orbit. This result has been strenghtened by Contreras in \cite{Con06} to every $k>c_u(L)$. 
In Section \ref{counterexamples} we provide an example showing that Contreras' result, and Theorem \ref{teosupercriticalintro} as well, are actually sharp.

\begin{teo*}
There exist a Tonelli Lagrangian $L:T\T^2\rightarrow \R$ and two disjoint submanifolds $Q_0,Q_1\subseteq \T^2$ such that $c_u(L)<c(L;Q_0,Q_1)$ and with
no connecting orbits having energy $k\leq c(L;Q_0,Q_1)$. Moreover, there are points $q_0\in Q_0$ and $q_1 \in Q_1$ such 
that there are no Euler-Lagrange orbits connecting them with energy $k\leq c_u(L)$. 
\label{counterexample1intro}
\end{teo*}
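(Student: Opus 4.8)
The plan is to build an explicit example on $\T^2$ by perturbing the geodesic flow of a flat metric with a well-chosen magnetic term, so that the relevant Ma\~n\'e critical values separate and the conormal problem has no low-energy solutions. Concretely, I would start from a one-form $\theta$ on $\T^2$ whose cohomology class is nontrivial, so that the closed form $\sigma=d\theta$ vanishes but $\theta$ itself is not exact; passing to the universal cover $\R^2$, the primitive of the lift of $\theta$ is a genuinely unbounded function, which is exactly the mechanism that makes $c_u(L)$ finite while still allowing the conormal critical value to be pushed up. Recall from the excerpt that for a magnetic Lagrangian $k(L;Q_0,Q_1)$ is computed by the formula \eqref{necessarylag} in terms of the tangential part of the vector $w_q$ dual to $\theta_q$, so by choosing $\theta$ so that on $Q_0$ and $Q_1$ the form $\theta$ has a large component \emph{tangent} to the submanifolds, I can force $k(L;Q_0,Q_1)$ — and hence a fortiori $c(L;Q_0,Q_1)\ge k(L;Q_0,Q_1)$ — to be strictly larger than $c_u(L)$, which depends only on the global cohomological size of $\theta$ and can be kept small.

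The submanifolds themselves I would take to be two disjoint embedded circles $Q_0,Q_1\subseteq\T^2$; the natural candidates are two parallel noncontractible circles (say two horizontal circles at different heights), as in Figure \ref{figure1} but with the magnetic term switched on. The point of disjointness and of the homotopy type is twofold: first, it removes the constant solution in the intersection that would otherwise trivially satisfy \eqref{cbc}; second, with $Q_0,Q_1$ both horizontal circles, the group $G$ generated by their fundamental groups inside $\pi_1(\T^2)=\Z^2$ is the $\Z$ coming from the horizontal direction, so the cover $M'=\widetilde M/G$ is an infinite cylinder, and $c(L;Q_0,Q_1)$ is the Ma\~n\'e critical value on that cylinder, which I can compute (or at least bound below) directly. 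I would then verify the two claims separately: (i) that no connecting orbit exists for $k\le c(L;Q_0,Q_1)$, and (ii) that there are points $q_0\in Q_0$, $q_1\in Q_1$ with no Euler-Lagrange orbit between them of energy $k\le c_u(L)$, the latter being the sharpness statement against Contreras' theorem.

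For step (i), the cleanest argument is variational together with the energy identity: a connecting orbit of energy $k$ is a critical point of $\A_k$ on $\mathcal M_Q$, and on any energy level below $k(L;Q_0,Q_1)$ the necessary condition \eqref{necessaryham} already fails, so $H^{-1}(k)$ does not even meet $N^*Q_0$ or $N^*Q_1$ and there is nothing to connect; for $k$ in the possible gap $(k(L;Q_0,Q_1),c(L;Q_0,Q_1)]$ I would argue that on the component of $\mathcal M_Q$ consisting of paths homotopic to a fixed short arc from $Q_0$ to $Q_1$, lifting to the cylinder $M'$ and using the explicit primitive $f$ of the lifted $\theta$ (with $H'(q,d_q f)\le k$ for $k\ge c(L;Q_0,Q_1)$, and the reverse strict inequality failing below) forces $\A_k$ to have no critical points: any critical point would yield, via the Hamilton--Jacobi comparison, an orbit confined to a region where the dynamics is that of a subcritical (non-contact) level, and on the cylinder one checks by hand (using the concrete $L=\tfrac12|v|^2+\theta$) that such an orbit cannot meet both conormal bundles. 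For step (ii), I would localize: choosing $q_0,q_1$ to be the ``worst'' points where the tangential magnetic obstruction is largest, I reduce to the point-to-point problem and invoke the known fact that below $c_u(L)$ the Ma\~n\'e potential between these two specific points can fail to be realized by a minimizing extremal — again made explicit on the cylinder cover.

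The main obstacle I anticipate is step (i) in the critical range $k\in(k(L;Q_0,Q_1),c(L;Q_0,Q_1)]$: showing \emph{non}-existence of critical points of $\A_k$ is genuinely harder than showing existence, because one cannot appeal to Palais--Smale or to a minimax principle — one must rule out \emph{all} extremals, including possibly long ones that wrap around the torus. The honest way to do this is to exploit the very rigid structure of the magnetic flow on a flat $\T^2$ (where the Euler--Lagrange equation is explicitly integrable — the orbits are, up to the flat coordinates, circles or lines), combine it with the conormality constraint \eqref{cbc} that at the endpoints $d_vL$ must kill the tangent direction of the horizontal circles, and show that the resulting boundary-value problem has no solution for the chosen $\theta$ and these energies. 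Getting the quantitative inequalities $c_u(L)<c(L;Q_0,Q_1)$ to hold simultaneously with the non-existence will require a careful but elementary tuning of the size and direction of $\theta$, and it is precisely this tuning that I would spell out in detail in the body of Section \ref{counterexamples}.
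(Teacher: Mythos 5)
Your construction has two genuine gaps, and they interact. First, you insist that $\sigma=d\theta$ vanish, i.e.\ that $\theta$ be closed and non-exact. Then the magnetic term does not affect the dynamics at all: the Euler--Lagrange flow is the flat geodesic flow, and since the lift of a closed form to $\R^2$ is exact, $c_u(L)=e_0(L)=0$ (your assertion that $c_u$ ``depends on the global cohomological size of $\theta$'' is not correct -- the universal cover kills the class entirely). With $c_u(L)=0$ the ``moreover'' part of the statement becomes vacuous (orbits of energy $\le 0$ are constants, and $Q_0\cap Q_1=\emptyset$), so the argument you actually describe for step (ii) -- invoking a ``known fact'' that below $c_u$ the Ma\~n\'e potential between two points may fail to be realized by an extremal -- is both inapplicable to your own example and not a citable fact: exhibiting such failure at positive energies $e_0<k\le c_u$ is precisely the content to be proved. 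The paper does this with a genuinely magnetic (non-closed) form $\theta=\psi(y)\,dx$ on the flat torus, for which $c_u(L)=\tfrac18$ and $c(L;Q_0,Q_1)=c(L)=\tfrac12$, and the point-to-point non-connectivity for $k\le\tfrac18$ is obtained from the first integral $I(q,v)=v_x+\psi(y)$, whose ranges on the two fibers over $q_0=(\tfrac12,0)$ and $q_1=(\tfrac12,\tfrac12)$ are disjoint for $k<\tfrac18$ (with a short separate analysis at $k=\tfrac18$). This conserved-quantity mechanism, which is what makes the counterexample to Contreras' theorem non-trivial, is absent from your proposal.

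Second, your step (i) is not established on the whole range $k\le c(L;Q_0,Q_1)$. You correctly identify the danger zone $(k(L;Q_0,Q_1),c(L;Q_0,Q_1)]$, but the ``Hamilton--Jacobi comparison'' you sketch there is not an argument; worse, your stated tuning strategy (make $\theta$ have a large, hence generically non-constant, tangential component along $Q_0,Q_1$ to push $k(L;Q_0,Q_1)$ up) will typically \emph{create} connecting orbits in that gap: with $d\theta=0$ the extremals are straight lines, and one only needs endpoints on the two circles with matching tangential components of $\theta$, which an intermediate-value argument produces as soon as $k$ exceeds the minimum of the tangential obstruction. The way out -- and the crux of the paper's example -- is to arrange the tangential obstruction to be constant along $Q_1$ (there $Q_0$ is a point and $Q_1=\{y=\tfrac12\}$ with $\psi\equiv1$ on it), so that $k(L;Q_0,Q_1)=c(L;Q_0,Q_1)$, the gap is empty, nonexistence below the critical value follows at once from \eqref{necessarylag}, and at the critical energy the only candidates allowed by \eqref{cbc} are the invariant horizontal orbits lying on $Q_1$, which never reach $Q_0$. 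If you make this choice (e.g.\ the harmonic representative along horizontal circles) your first claim goes through by exactly this elementary reasoning, but the second claim then holds only in the vacuous sense above; to prove the theorem with the content it is meant to have you need $0<c_u(L)<c(L;Q_0,Q_1)$, hence a non-closed $\theta$, and a concrete obstruction such as the paper's first integral.
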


In order to get existence results for subcritical energies one has therefore to assume that $Q_0\cap Q_1 \neq \emptyset$. Denote with $\mathcal N$ the connected component of $\mathcal M_Q$ containing the constant paths and with $\Omega$ an isolated connected component 
of $Q_0\cap Q_1$, meaning that there exists $\epsilon >0$ such that $B_\epsilon (\Omega)$ is disjoint from any other connected components of $Q_0\cap Q_1$. Now set
$$k_{\Omega} := \min \left \{c(L;Q_0,Q_1), \max_{q\in \Omega} E(q,0) + \lambda \cdot \max_{q\in \Omega} |d_vL(q,0)|^2\right \} ,$$
where $\lambda>0$ is a constant depending only on $L$ which equals $\frac 12$ in case $L$ is a magnetic Lagrangian (see Section \ref{existenceresultsforlowenergies}
for the precise definition). The definition of $k_\Omega$ in case $\Omega$ is not isolated is more delicate and will be postponed to Section \ref{existenceresultsforlowenergies}.

\begin{oss}
The energy values $k(L;Q_0,Q_1)$ and $k_\Omega$ strongly depend on how the submanifolds $Q_0$ and $Q_1$ sit inside $M$. This is in sharp contrast with what 
happens for the critical value $c(L;Q_0,Q_1)$ which only depends on the homotopy classes of $(M,Q_0)$ and $(M,Q_1)$, meaning that if we have a continuous map $\{F_t:(M,Q_0^0)\rightarrow (M,Q_0^t)\}_{t\in I}$, then
$$c(L;Q_0^0,Q_1)=c(L;Q_0^t,Q_1),\quad \forall t\in I.$$
Moreover it follows directly from the definition that $c(L;Q_0,Q_1)=c(L;Q_1,Q_0)$.
\end{oss}

In general the energy value $k_{\Omega}$ need not coincide with $c(L;Q_0,Q_1)$; examples will be provided in Section \ref{counterexamples}. 
The relevance of $k_{\Omega}$ relies on the fact that, for $k\in (k_{\Omega},c(L;Q_0,Q_1))$ the free-time action functional has a mountain-pass geometry on $\mathcal N$. The two valleys are represented by the set of constant paths 
and by the the set of paths with negative action (which is non-empty as $k<c(L;Q_0,Q_1))$. 
Exploiting this mountain-pass geometry we get the following

\begin{teo*}
For almost every $k\in (\displaystyle \inf_{\Omega} k_{\Omega},c(L;Q_0,Q_1))$ there is a connecting orbit with energy $k$.
\label{teosubcriticalintro}
\end{teo*}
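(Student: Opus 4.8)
The plan is to exploit the mountain-pass geometry of $\A_k$ on the connected component $\mathcal N$ containing the constant paths, combined with the Struwe–type monotonicity argument that replaces the missing Palais–Smale condition in the subcritical range. Fix $k_* \in (\inf_\Omega k_\Omega, c(L;Q_0,Q_1))$; I want to show that a connecting orbit of energy $k$ exists for almost every $k$ in this interval. First I would record that for $k$ in this range the two ``valleys'' promised before the statement are genuine: the constant paths in $Q_0\cap Q_1$ (or rather in the isolated component $\Omega$) form a set on which $\A_k$ is close to $T\cdot(E(q,0)+k)$ for small $T$, which is positive once $k$ is slightly above $\max_\Omega E(q,0)$, while the quantitative bound built into the definition of $k_\Omega$ (the $\lambda\cdot\max|d_vL(q,0)|^2$ term) guarantees that there is a strict potential well near $\Omega$: a small punctured neighbourhood of $\Omega$ in $\mathcal N$ on which $\A_k$ stays bounded below by a positive constant $\alpha_k>0$. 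On the other side, since $k<c(L;Q_0,Q_1)$, the results of Section \ref{themanecriticalvalueclq0q1} give a path in $\mathcal N$ with strictly negative action. This produces a mountain-pass class $\Gamma_k$ of paths in $\mathcal N$ joining $\Omega$ to the sublevel $\{\A_k<0\}$, with $c(k):=\inf_{\gamma\in\Gamma_k}\max_{s}\A_k(\gamma(s))\ge \alpha_k>0$.

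The second step is the monotonicity trick. The map $k\mapsto c(k)$ is monotone (non-decreasing, up to an affine correction coming from the $kT$ term in $\A_k$, which is handled by the standard observation that $k\mapsto \A_k(x,T)$ is affine with slope $T>0$), hence differentiable at almost every $k$. At a point of differentiability one extracts, by a now-classical argument (Struwe; see also Contreras \cite{Con06} and Abbondandolo \cite{Abb13} in the Mañé-critical-value context), a bounded Palais–Smale sequence $(x_n,T_n)$ at level $c(k)$ whose times $T_n$ are bounded \emph{above}: differentiability of $c$ forces the near-optimal paths in $\Gamma_{k'}$ for $k'$ close to $k$ to have uniformly bounded length, which in turn bounds $T_n$. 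Since $c(k)>0$ and $\A_k$ is uniformly bounded on this sequence, the energy identity together with the Tonelli growth of $L$ prevents the times from collapsing to zero (a sequence with $T_n\to0$ would have action tending to a nonnegative multiple of a boundary term incompatible with staying at a strictly positive level while the free endpoints sit near $\Omega$; more precisely the quantitative lower bound $\alpha_k$ is exactly what rules out $T_n\to 0$). Thus $T_n$ is bounded away from $0$ and from $\infty$.

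The third step is compactness: once $T_n\in[\delta, \Delta]$ with $0<\delta<\Delta<\infty$, a Palais–Smale sequence for $\A_k$ at a positive level converges (up to a subsequence) in $\mathcal M_Q$ — this is the standard compactness statement for the free-time action functional on a fixed-time-bounded region, and it does not require $k>c(L;Q_0,Q_1)$ since that hypothesis was only needed to control sequences with $T_n\to\infty$. The limit $(x,T)$ is a critical point of $\A_k$ at level $c(k)>0$, hence nonconstant (constants have action $T(E(q,0)+k)$, and the mountain-pass level lies strictly above the value on the $\Omega$-valley by construction), and by the variational characterisation of the conormal problem recalled in the introduction it is precisely an Euler–Lagrange orbit of energy $k$ satisfying \eqref{cbc}, i.e.\ a connecting orbit.

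The main obstacle I expect is the ``times bounded away from zero'' claim for the minimax sequence, i.e.\ verifying that the mountain-pass level $c(k)$ stays uniformly positive and, more delicately, that one can choose the Struwe sequence with $T_n$ bounded below. This is exactly where the definition of $k_\Omega$ — and in particular the constant $\lambda$ and the term $\max_\Omega|d_vL(q,0)|^2$ — must be used in an essential way: it is the precise threshold below which the short-time behaviour of $\A_k$ near $\Omega$ develops the strict local well that both creates the mountain pass and obstructs $T_n\to0$. Handling the case in which $\Omega$ is not isolated (so that the valley near $\Omega$ could be ``leaked into'' along other components of $Q_0\cap Q_1$) requires the more careful definition of $k_\Omega$ deferred to Section \ref{existenceresultsforlowenergies}, and the minimax class must be set up relative to that refined valley; this is the other point where care is needed, but the Struwe/monotonicity machinery then applies verbatim.
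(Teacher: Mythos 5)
Your proposal is correct and follows essentially the same route as the paper: the mountain-pass geometry on the component containing the constant paths, created precisely by the threshold $k_\Omega$ (Lemma \ref{lemmaminimax1}, with isolating families handling non-isolated $\Omega$), the monotonicity and hence almost-everywhere differentiability of the minimax function, Struwe's argument producing a bounded Palais--Smale sequence at the positive minimax level (Lemma \ref{struwe}), the positivity of that level excluding $T_h\to 0$ (Lemma \ref{yesmodification}), and compactness for Palais--Smale sequences with times bounded and bounded away from zero (Lemma \ref{lemmalimitatezza}). This matches the paper's proof in all essential respects.
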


In the theorem above, by taking the infimum of $k_\Omega$ over all connected components $\Omega$ of $Q_0\cap Q_1$ we get a critical value which is a priori smaller than $k_{Q_0\cap Q_1}$
and, hence, a sharper result. 

The ``almost every'' relies exactly on the lack of the Palais-Smale condition for $\A_k$ for subcritical energies. To overcome this difficulty one has to use an argument originally due to Struwe \cite{Str90}, which has already been intensively applied 
to the existence of periodic Euler-Lagrange orbits for subcritical energies \cite{Abb13,AMMP14,AB14,AB15a,Con06}, called the \textit{Struwe monotonicity argument}.

As a trivial corollary we get the following existence result of Arnold chords for subcritical energies. 
\begin{cor*}
Let $Q_0\subseteq M$ be a non-empty closed connected submanifold. Then for almost every $k\in (k_{Q_0}, c(L;Q_0))$ there is an Arnold chord 
with energy $k$. 
\label{arnoldchord}
\end{cor*}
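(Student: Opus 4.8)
The plan is to obtain this statement as an immediate specialization of Theorem~\ref{teosubcriticalintro} to the case $Q_1=Q_0$. First I would record the two identifications that make the two statements line up: by the definition given in the excerpt, an \emph{Arnold chord} for $Q_0$ is precisely a solution of the conormal boundary conditions \eqref{cbc} for the pair $(Q_0,Q_1)$ with $Q_1=Q_0$, i.e.\ a connecting orbit in the sense used throughout the paper; and the Ma\~n\'e critical value $c(L;Q_0)$ appearing in the statement is, again by definition, equal to $c(L;Q_0,Q_0)$, so the right endpoint of the energy interval is the correct one.

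Next I would handle the lower endpoint. Since $Q_0$ is assumed non-empty, closed and connected, the intersection $Q_0\cap Q_1=Q_0$ is itself connected, hence it has exactly one connected component, namely $\Omega=Q_0$, and this component is vacuously isolated (there is no other component from which $B_\epsilon(\Omega)$ must be kept disjoint). Therefore $k_\Omega$ is the quantity defined in the excerpt for isolated components, and the infimum $\inf_\Omega k_\Omega$ occurring in Theorem~\ref{teosubcriticalintro} collapses to the single value
$$k_{Q_0}=\min\Big\{c(L;Q_0),\ \max_{q\in Q_0} E(q,0)+\lambda\cdot\max_{q\in Q_0}|d_vL(q,0)|^2\Big\},$$
which is exactly the $k_{Q_0}$ of the statement.

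Finally I would simply invoke Theorem~\ref{teosubcriticalintro} for the pair $(Q_0,Q_0)$: it produces, for almost every $k\in(k_{Q_0},c(L;Q_0))$, a connecting orbit with energy $k$, which by the first paragraph is precisely an Arnold chord with energy $k$. There is no genuine obstacle here; the only thing to be careful about is the bookkeeping above, namely that $c(L;Q_0)$ and $k_{Q_0}$ are literally the specializations of $c(L;Q_0,Q_1)$ and of $\inf_\Omega k_\Omega$, and that connectedness of $Q_0$ is exactly what forces $Q_0\cap Q_1$ to be a single isolated component so that the ``almost every $k$'' range of Theorem~\ref{teosubcriticalintro} becomes the stated interval. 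In particular the ``almost every'' (rather than ``every'') persists, since it originates from the Struwe monotonicity argument used to cope with the failure of the Palais--Smale condition at subcritical energies, and nothing in the specialization removes it.
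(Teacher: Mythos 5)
Your proposal is correct and follows exactly the paper's route: the corollary is obtained as the trivial specialization of Theorem~\ref{teosubcriticalintro} to $Q_1=Q_0$, with $Q_0\cap Q_1=Q_0$ a single (vacuously isolated) component so that $\inf_\Omega k_\Omega=k_{Q_0}$ and $c(L;Q_0,Q_0)=c(L;Q_0)$. Your bookkeeping of the endpoints and the persistence of the ``almost every'' matches the paper's intent.
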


As we will see in Section \ref{counterexamples}, also Theorem \ref{teosubcriticalintro} above is sharp. 

\begin{teo*}
For every closed surface $\Sigma$, there exist a Tonelli Lagrangian $L:T\Sigma \rightarrow \R$ and intersecting submanifolds $Q_0,Q_1\subseteq \Sigma$ such 
that 
$$k(L;Q_0,Q_1)<k_\Omega<c(L;Q_0,Q_1)$$
and with no connecting orbits having energy $k<k_\Omega$.
\label{counterexample2intro}
\end{teo*}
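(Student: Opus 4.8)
The plan is to build, for each closed surface $\Sigma$, an explicit magnetic Lagrangian together with two intersecting submanifolds witnessing the strict chain of inequalities $k(L;Q_0,Q_1)<k_\Omega<c(L;Q_0,Q_1)$ and obstructing low-energy connecting orbits. I would take $\theta$ a closed one-form supported in an annular region of $\Sigma$ so that $d\theta=0$ and hence $c_u(L)$, $c_0(L)$ and $c(L;Q_0,Q_1)$ are all governed purely by how far $\theta$ can be killed by an exact perturbation on the relevant cover (cf. the formula \eqref{clq0q1intro} with $H'(q,p)=\tfrac12|p-\theta_q|^2$); this makes $c(L;Q_0,Q_1)$ computable and allows me to inflate it by scaling $\theta$. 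The two submanifolds $Q_0,Q_1$ will be chosen to be two circles meeting transversally in exactly one point $q_0$ (so $\Omega=\{q_0\}$ is automatically isolated), positioned so that near $q_0$ the form $\theta$ is large: concretely I want $E(q_0,0)=0$ (pure kinetic part) but $|d_vL(q_0,0)|^2=|\theta_{q_0}|^2>0$ large, so that $k_\Omega=\tfrac12|\theta_{q_0}|^2$ sits strictly between $k(L;Q_0,Q_1)=\max\{\tfrac12|\mathcal P_0w_{q}|^2,\ldots\}$ (which I arrange to vanish or be small, e.g. by letting $\theta$ be orthogonal to $T_{q_0}Q_i$ so $\mathcal P_iw_{q_0}=0$, and controlling the minimum over $Q_i$) and $c(L;Q_0,Q_1)$ (which I make as large as I like by scaling). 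The first task is thus purely bookkeeping: verify \eqref{necessarylag} gives the claimed $k(L;Q_0,Q_1)$, verify $k_\Omega=\tfrac12|\theta_{q_0}|^2$ with $\lambda=\tfrac12$, and verify $c(L;Q_0,Q_1)$ is large, i.e. check $k<k_\Omega<c(L;Q_0,Q_1)$ holds.

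The substantive part is the nonexistence of connecting orbits with energy $k<k_\Omega$. Here the idea is a direct dynamical/variational obstruction near $q_0$. A connecting orbit $\gamma$ with $\gamma(0)=\gamma(R)=q_0$ (the only intersection point) satisfying \eqref{cbc} must have $d_vL(q_0,\dot\gamma(0))$ annihilating $T_{q_0}Q_0$ and $d_vL(q_0,\dot\gamma(R))$ annihilating $T_{q_0}Q_1$; writing $d_vL(q_0,v)=v^\flat+\theta_{q_0}$, this forces $\dot\gamma(0)^\flat+\theta_{q_0}\perp T_{q_0}Q_0$ and similarly at the endpoint. Combined with the energy constraint $\tfrac12|\dot\gamma|^2=k$ (since $E=\tfrac12|v|^2$ for a magnetic Lagrangian with closed $\theta$, the potential part being absent), one gets $|\dot\gamma(0)|^2=2k<2k_\Omega=|\theta_{q_0}|^2$, so $\dot\gamma(0)$ is a \emph{short} vector while $\theta_{q_0}$ is long; the orthogonality condition on $\dot\gamma(0)^\flat+\theta_{q_0}$ relative to the line $T_{q_0}Q_0$ then pins down $\dot\gamma(0)$ (and $\dot\gamma(R)$) almost completely, up to the sign/choice forced by $|\theta_{q_0}|>\sqrt{2k}$. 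One must rule out the \emph{nonconstant} orbits emanating with these pinned initial conditions: either they immediately leave the region where $\theta$ is supported (so they become straight geodesics that cannot return to hit $Q_1$ orthogonally, as in the flat-torus counterexample of the introduction), or they stay in the magnetic region, where I would use that the Lagrangian has been tuned so that for $k<k_\Omega$ the Euler--Lagrange flow on $E^{-1}(k)$ restricted near $q_0$ has no orbit chord realizing both boundary conditions — this can be enforced by making the magnetic region thin and $\theta$ concentrated, so the orbit curves back before it can reach $Q_1$. The constant solution at $q_0$ is excluded from ``connecting orbit'' in the relevant sense, or has energy $0<k$.

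The main obstacle I anticipate is exactly this last nonexistence step: ensuring \emph{no} nonconstant Euler--Lagrange orbit of energy $k<k_\Omega$ closes up the boundary conditions, uniformly in the (small) range of $k$, and doing so for \emph{every} closed surface $\Sigma$ simultaneously. The surface-independence is handled by observing the whole construction is local — put the annulus, the form $\theta$, and the circles $Q_0,Q_1$ inside a disk embedded in $\Sigma$, and let $L$ be the flat metric plus $\theta$ there, extended arbitrarily (Tonelli) outside; since any connecting orbit of low energy that wanders outside the disk costs too much action or cannot satisfy the orthogonality at $q_0$, the global topology of $\Sigma$ is irrelevant. The delicate calibration — choosing the diameter of the magnetic annulus, the size $|\theta_{q_0}|$, and the energy window so that all three inequalities are strict and the orbit-exclusion argument goes through — is where the real work lies, and I would carry it out by first doing the model computation on $\R^2$ (or a flat cylinder) with $\theta=f(r)\,d\phi$ for a suitable bump $f$, computing the magnetic geodesics explicitly (they are arcs of circles when $f$ is locally constant), and then showing directly that no such arc can meet the two fixed lines through $q_0$ orthogonally in the $d_vL$-sense when $k<\tfrac12|\theta_{q_0}|^2$.
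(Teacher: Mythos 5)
Your two key design choices defeat the statement before the dynamics even enters. First, you take $\theta$ closed and (for surface-independence) you put $\theta$, $Q_0$, $Q_1$ inside an embedded disk. If $d\theta=0$, the lift of $\theta$ to the universal cover is exact, so by \eqref{hamiltoniancharacterization} one gets $c_u(L)=0$; and if $Q_0,Q_1$ lie in a disk they are contractible, so $G_0=G_1=\{1\}$ and $c(L;Q_0,Q_1)=c_u(L)=0$. Since by \eqref{kq0q1} $k_\Omega=\min\{c(L;Q_0,Q_1),\dots\}\leq c(L;Q_0,Q_1)$ and $k(L;Q_0,Q_1)\geq 0$ for a magnetic Lagrangian, the chain $k(L;Q_0,Q_1)<k_\Omega<c(L;Q_0,Q_1)$ is then impossible. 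On $S^2$ this cannot be repaired by making the annulus essential, because $H^1(S^2;\R)=0$ forces any closed $\theta$ to be exact and hence $c(L;Q_0,Q_1)=0$; so ``for every closed surface'' is out of reach along this route. There are also two internal inconsistencies: your model form $\theta=f(r)\,d\phi$ with a bump $f$ is \emph{not} closed, and with a genuinely closed $\theta$ the Euler--Lagrange flow is the geodesic flow, so orbits do not ``curve back'' and the confinement you invoke in the nonexistence step does not exist. Finally the calibration is backwards: you want $|\theta_{q_0}|$ \emph{large} at the intersection, but by \eqref{chainofcriticalvalues} (with $e_0(L)=0$, $a=\tfrac12$ for magnetic $L$) one has $c(L;Q_0,Q_1)\leq k_0(L)\leq \tfrac12\|\theta\|_\infty^2$, so if $|\theta_{q_0}|=\|\theta\|_\infty$ then $k_\Omega=\min\{c(L;Q_0,Q_1),\tfrac12|\theta_{q_0}|^2\}=c(L;Q_0,Q_1)$ and the middle inequality degenerates to an equality. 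You need $|\theta|$ \emph{small} near $\Omega$ and large elsewhere.

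For comparison, the paper keeps your ``local in a disk'' idea but with a genuinely nonzero magnetic field: it takes a cut-off of $\phi(x)\cdot\theta$ on a large compact piece of the hyperbolic plane, where $\theta$ is the primitive of the area form and $\phi\in[1,2]$ equals $1$ on a strip containing $Q_1$ and $2$ outside. Because $d\theta\neq 0$, even \emph{contractible} loops acquire negative action, which is what makes $c_u(L)\geq\tfrac32$ (hence $c(L;Q_0,Q_1)\geq\tfrac32$) despite the construction being local; $|\theta|=1$ on $\Omega$ gives $k_\Omega=\tfrac12$, and $k(L;Q_0,Q_1)=0$ from the vertical edges of $Q_0$. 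The nonexistence for $k<\tfrac12$ then follows from exactly the two ingredients you gestured at but could not supply: the pointwise conormal condition at the $Q_0$-endpoint, $|\mathcal P_0 w_q|\leq\sqrt{2k}$, forces the orbit to start on the vertical edges of a long rounded rectangle $Q_0$ (on the horizontal edges $|\mathcal P_0 w_q|\geq 1$, cf.\ \eqref{necessarylag}); and in the strong-field region the energy-$k$ orbits are circles of uniformly bounded size, so by making the rectangle long enough they are confined away from the strip containing $Q_1$. This confinement mechanism is precisely what vanishes in your zero-field setting, and it is also what makes the construction surface-independent, including on $S^2$.
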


We end this introduction giving a brief summary of the contents of the paper:

\begin{itemize}
\item In Section \ref{thefreetimelagrangianactionfunctional} we introduce the free-time Lagrangian action functional $\A_k$ rigorously and discuss its properties (with particular attention to the Palais-Smale condition and to the completeness of the negative gradient flow). 
\item In Section \ref{themanecriticalvalueclq0q1} we define the Ma\~n\'e critical value $c(L;Q_0,Q_1)$ which is relevant for the problem and show how the properties of $\A_k$ change when considering ``subcritical'' rather than ``supercritical'' values of $k$. 
\item In Section \ref{existenceresultsforhighenergies} we deal with the case of supercritical energies and prove Theorem \ref{teosupercriticalintro}.
\item In Section \ref{existenceresultsforlowenergies} we consider the case of subcritical energies and prove Theorem \ref{teosubcriticalintro}.
\item Finally, in Section \ref{counterexamples}, we prove Theorems \ref {counterexample1intro} and \ref{counterexample2intro}.
\end{itemize}

%%%%%%%%%%%%%%%%%%%%%%%%%%%%%%%%%%%%%%%%%%%%%%%%%%%%%%%

\section{The free-time Lagrangian action functional}
\label{thefreetimelagrangianactionfunctional}

For any given absolutely continuous curve $\gamma:[0,T]\rightarrow M$ we define $x:[0,1]\rightarrow M$ as $x(s):=\gamma(s\, T)$. Throughout the whole work we will identify $\gamma$ with the pair $(x,T)$.

To avoid confusion we will always denote with a \textit{dot} the derivative with respect to $t$ and with a \textit{prime} the derivative with respect to $s$.

Fix a real number $k$, the value of the energy for which we would like to find Euler-Lagrange orbits satisfying the conormal boundary conditions \eqref{cbc}.
Recall that, since the energy level $E^{-1}(k)$ is compact, up to the modification of $L$ outside it, we may assume the Tonelli Lagrangian $L$ to be quadratic at infinity. In particular
\begin{align}
L(q,v) &\geq a |v|^2 - b , \quad \forall (q,v)\in TM , \label{firstinequality} \\ 
d_{vv}L(q,v)[u,u] &\geq 2a |u|^2, \quad  \forall (q,v)\in TM ,\ \forall u\in T_qM, \label{secondinequality}
\end{align}
for suitable numbers $a >0$, $b\in \R$ and
\begin{equation}
S_k(x,T) := \int_0^T \Big[L(\gamma(t),\dot{\gamma}(t))+k \Big ]\, dt= T\int_0^1 \Big[ L\Big(x(s),\frac{x'(s)}{T} \Big ) + k\Big ]\, ds 
\label{freetimeactionfunctional}
\end{equation}
is well-defined for every $x\in H^1([0,1],M)$. Hence, we get a well-defined functional
$$S_k : H^1([0,1],M) \times (0,+\infty) \longrightarrow \R\, ,$$
called the \textit{free-time action functional}. The domain of definition $\mathcal M:=H^1([0,1],M)\times (0,+\infty)$ of $S_k$ can be interpreted as the space of $H^1$-paths in $M$ 
with arbitrary interval of definition through the identification $\gamma=(x,T)$ above and it has a natural structure of product Hilbert manifold given by the product metric 
\begin{equation}
g_{\mathcal M} := g_{H^1}+ dT^2,
\label{productmetric}
\end{equation}
where $g_{H^1}$ is the standard metric on $H^1([0,1],M)$ induced by the given Riemannian metric $g$ on $M$ (see \cite{AS09} for further details). Obviously, $(\mathcal M,g_{\mathcal M})$ is not complete as the factor $(0,+\infty)$ is 
not complete with respect to the Euclidean metric. The following proposition is about the regularity of the free-time action functional $S_k$; for the proof we refer again to \cite{AS09} (see also \cite[Proposition 3.1.1]{Ass15}).

\vspace{1mm}

\begin{prop}
The following hold:
\begin{enumerate}
\item $S_k \in C^{1,1}(\mathcal M)$ and it has second Gateaux differential at every point.
\item $S_k$ is twice Fr\'ech\'et differentiable at every point if and only if $L$ is electromagnetic on the whole $TM$; in this case, $S_k$ is actually smooth.
\end{enumerate}
\label{propregularity}
\end{prop}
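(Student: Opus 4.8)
The plan is to reduce the statement to a local computation on $H^1([0,1],\R^n)$ and to feed it into the theory of Nemytskii (superposition) operators, the only genuine bookkeeping being to determine, for each partial derivative of $L$, which Lebesgue space it lands in once precomposed with an $H^1$-curve. Since regularity is local and the variable $T$ is finite-dimensional, I would fix $(x_0,T_0)\in\mathcal M$, work in a chart of $H^1([0,1],M)$ around $x_0$ — so that $M$ becomes an open subset of $\R^n$ and $L$ a Lagrangian which is still quadratic at infinity, with constants uniform on the compacta of $TM$ relevant near $x_0$ — and handle the $T$-derivatives by ordinary differentiation under the integral sign. The quadratic-at-infinity hypothesis gives that $d_qL$ grows at most quadratically in $v$, that $d_vL$ and $d_{qv}L$ grow at most linearly, and that $d_{vv}L$ is bounded; hence along an $H^1$-curve (where $x'/T\in L^2$) one has $d_qL(x,x'/T),\,d_{qq}L(x,x'/T)\in L^1$, $\ d_vL(x,x'/T),\,d_{qv}L(x,x'/T)\in L^2$ and $d_{vv}L(x,x'/T)\in L^\infty$.

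For part (1), the candidate first differential on $H^1([0,1],\R^n)\times\R$ is
\[
dS_k(x,T)[(\xi,\tau)]=\int_0^1\Big[d_qL\big(x,\tfrac{x'}{T}\big)\cdot\xi+d_vL\big(x,\tfrac{x'}{T}\big)\cdot\tfrac{\xi'}{T}\Big]\,ds+\tau\int_0^1\Big[L\big(x,\tfrac{x'}{T}\big)-d_vL\big(x,\tfrac{x'}{T}\big)\cdot\tfrac{x'}{T}+k\Big]\,ds,
\]
a bounded linear functional by H\"older (the first summand pairs $L^1$ with $L^\infty$ and $L^2$ with $L^2$; the $\tau$-summand is an $L^1$-integral). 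A mean-value argument plus dominated convergence — dominating by the $L^1$ and $L^2$ bounds above — identifies it with the Gateaux, hence the Fr\'ech\'et, differential of $S_k$, and $(x,T)\mapsto dS_k(x,T)$ is continuous because the Nemytskii operators $(x,T)\mapsto d_qL(x,x'/T)$ and $(x,T)\mapsto d_vL(x,x'/T)$ are continuous into $L^1$, respectively $L^2$; thus $S_k\in C^1(\mathcal M)$. For the $C^{1,1}$ bound one estimates $\|dS_k(x_1,T_1)-dS_k(x_2,T_2)\|$, using that $d_qL$ and $d_vL$ are Lipschitz on the relevant compacta with constants controlled in the $v$-direction by the bounds on $d_{qq}L$, $d_{qv}L$, $d_{vv}L$, so that the difference is dominated by $\|x_1-x_2\|_{H^1}+|T_1-T_2|$, uniformly on bounded subsets of $\mathcal M$. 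Differentiating $dS_k$ once more formally produces the symmetric bilinear form $d^2S_k(x,T)$, evaluated on $(\xi,\tau),(\eta,\sigma)$, whose principal terms are $\int_0^1 d_{qq}L[\xi,\eta]$, $\tfrac1T\int_0^1\big(d_{qv}L[\xi,\eta']+d_{qv}L[\eta,\xi']\big)$ and $\tfrac1{T^2}\int_0^1 d_{vv}L[\xi',\eta']$, plus terms bounded in $(\tau,\sigma)$; this form is bounded exactly by the pairings $L^1$--$L^\infty$, $L^2$--$L^2$, $L^\infty$--$L^2$, and a final dominated-convergence argument identifies it with the second Gateaux differential of $S_k$. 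This proves (1).

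For part (2), I would argue that $S_k$ is twice Fr\'ech\'et differentiable at every point exactly when the (everywhere-defined) map $(x,T)\mapsto d^2S_k(x,T)$ is continuous into the space of bounded bilinear forms, and that this holds iff $L$ is electromagnetic; the bootstrap to $C^\infty$ comes last. Since the Nemytskii operators $(x,T)\mapsto d_{qq}L(x,x'/T)\in L^1$ and $(x,T)\mapsto d_{qv}L(x,x'/T)\in L^2$ are always continuous, the sole possible obstruction is the term $\tfrac1{T^2}\int_0^1 d_{vv}L(x,x'/T)[\xi',\eta']$, whose contribution to the operator norm is comparable to $\|d_{vv}L(x,x'/T)\|_{L^\infty}$ because $\xi',\eta'$ range only over $L^2$; and $u\mapsto d_{vv}L(\cdot,u(\cdot))$ is continuous from $L^2$ into $L^\infty$ if and only if $d_{vv}L$ is independent of $v$, i.e. $L$ is electromagnetic. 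In the converse case one exhibits the discontinuity by perturbing a curve $x_0$ for which $(x_0(s_0),x_0'(s_0)/T_0)$ sits where $d_{vv}L$ genuinely varies in $v$, using bumps $a_n\rho\big((\cdot-s_0)/\epsilon_n\big)$ with $\epsilon_n\sim a_n\to0$: these tend to $0$ in $H^1$ yet change $x'/T$ by an $O(1)$ amount on a set of measure $\sim\epsilon_n$, so $d_{vv}L$ evaluated along the whole segment from $x_0$ to the perturbed curve stays a fixed $L^\infty$-distance from $d_{vv}L(x_0,x_0'/T_0)$, and through the identity $dS_k(x_0+h)-dS_k(x_0)=\int_0^1 d^2S_k(x_0+rh)[h,\cdot]\,dr$ this is incompatible with Fr\'ech\'et differentiability of $dS_k$ at $(x_0,T_0)$. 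Conversely, if $L$ is electromagnetic then $d_{vv}L=d_{vv}L(q)$ and $L$ is a polynomial of degree at most two in $v$, so every Nemytskii operator occurring in $d^2S_k$ — and, by the same reasoning, in every higher differential — is continuous into $C^0$ (for the $x$-dependent coefficients) or into $L^1$/$L^2$ (for the factors carrying $x'$); hence $S_k\in C^\infty$.

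The step I expect to be the main obstacle is the ``only if'' half of (2): one must pin down which portion of the operator norm of $d^2S_k(x,T)$ is governed by $\|d_{vv}L(x,x'/T)\|_{L^\infty}$, construct the high-frequency $H^1$-perturbations witnessing its failure of continuity, and turn this into an honest failure of pointwise twice Fr\'ech\'et differentiability via the integral identity above. The remainder is routine, although the $C^{1,1}$ estimate in (1) still calls for some care in tracking constants under the change of chart and in the coupling with the $T$-variable.
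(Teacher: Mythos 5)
Your overall strategy (localize in a chart, treat $T$ as a finite-dimensional parameter, and run everything through growth conditions for the Nemytskii operators generated by $d_qL$, $d_vL$, $d_{qq}L$, $d_{qv}L$, $d_{vv}L$) is exactly the route of the source the paper itself defers to for this proposition, namely \cite{AS09} (see also \cite[Proposition 3.1.1]{Ass15}); the paper contains no independent proof. For part (1) and for the ``if'' half of part (2) your outline is essentially correct: with $L$ quadratic at infinity the stated growth rates are right, the first differential is as you write it up to a missing overall factor of $T$ in the $x$-part (it should read $T\int_0^1 d_qL\cdot\xi\,ds+\int_0^1 d_vL\cdot\xi'\,ds$, and similarly the second differential carries factors $T$, $1$, $1/T$ rather than $1$, $1/T$, $1/T^2$), which is immaterial since $T$ is locally bounded and bounded away from zero; and in the electromagnetic case the fiberwise-polynomial structure plus the embedding $H^1\hookrightarrow C^0$ gives smoothness.

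The genuine gap is in the ``only if'' half of (2). Your framing rests on the claim that twice Fr\'echet differentiability at every point is equivalent to continuity of $(x,T)\mapsto d^2S_k(x,T)$ into the space of bounded bilinear forms; that equivalence is false in general (continuity of the second Gateaux differential characterizes $C^2$, which is strictly stronger than pointwise twice Fr\'echet differentiability), so the fact that the Nemytskii operator $u\mapsto d_{vv}L(\cdot,u)$ is continuous from $L^2$ to $L^\infty$ only when $d_{vv}L$ is $v$-independent does not by itself rule out twice Fr\'echet differentiability for a non-electromagnetic $L$. You do supplement this with the correct tool --- bumps $a_n\rho((\cdot-s_0)/\epsilon_n)$ with $a_n\sim\epsilon_n$, together with the identity $dS_k(x_0+h)-dS_k(x_0)=\int_0^1 d^2S_k(x_0+rh)[h,\cdot]\,dr$ --- but the conclusion you draw from it (``$d_{vv}L$ along the segment stays a fixed $L^\infty$-distance away, hence incompatibility'') is not yet a proof: an operator-norm lower bound on $d^2S_k(x_0+rh_n)-d^2S_k(x_0)$ says nothing about its value on the particular pair $(h_n,\eta)$, the discrepancy lives only on a set of measure $\sim\epsilon_n$, it degenerates as $r\to 0$, and the integrand $\big(d_{vv}L(x_0+rh_n,(x_0'+rh_n')/T)-d_{vv}L(x_0,x_0'/T)\big)[h_n',\eta']$ can change sign in $s$ and in $r$. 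To close the argument one must choose $(q_0,v_0)$, the velocity increment $w$, the bump profile and a test field $\eta_n$ (e.g.\ with $\eta_n'$ supported where $\rho'$ has a fixed sign) so that the Taylor remainder of $d_vL(q_0,\cdot)$ between $v_0$ and $v_0+w$ --- nonzero whenever $v\mapsto L(q_0,v)$ is not a quadratic polynomial --- produces a contribution of size $\gtrsim\epsilon_n\sim\|h_n\|_{H^1}\|\eta_n\|_{H^1}$ without cancellation, contradicting the $o(\|h_n\|_{H^1})$ requirement. This quantitative step, which you yourself flag as ``the main obstacle,'' is precisely the content of the cited proof and is missing from the proposal.
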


Let now $Q_0,Q_1\subseteq M$ be non-empty closed connected submanifolds. Since we want to prove the existence of connecting orbits, we shall consider the restriction of $S_k$ to the smooth submanifold 
$$\mathcal M_Q:= H^1_Q([0,1],M) \times (0,+\infty),$$
where $Q=Q_0\times Q_1$ and $H^1_Q([0,1],M)$ is the space of $H^1$-paths $x:[0,1]\rightarrow M$ connecting $Q_0$ with $Q_1$. We denote with $\A_k$ the restriction $S_k|_{\mathcal M_Q}$.
The importance of $\A_k$ relies on the following:

\begin{prop}
A curve $\gamma=(x,T)$ is a connecting orbit with energy $E(\gamma,\dot{\gamma})= k$ 
if and only if $(x,T)$ is a critical point of the free-time action functional $\A_k$.
\label{equivalence}
\end{prop}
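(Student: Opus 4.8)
The plan is to establish the equivalence by the standard first-variation computation, being careful to separate the interior Euler-Lagrange equation, the energy condition coming from the free-time parameter, and the conormal transversality conditions at the endpoints coming from the fact that we restrict $S_k$ to $\mathcal M_Q$. First I would compute the differential of $\A_k=S_k|_{\mathcal M_Q}$ at a point $(x,T)$ in the two independent directions: a variation $\xi$ of the path (a section of $x^*TM$ along $x$ that is tangent to $Q_0$ at $s=0$ and to $Q_1$ at $s=1$, since these are the admissible variations in $H^1_Q([0,1],M)$), and a variation $\tau\in\R$ of the time $T$. Reparametrizing back to $\gamma:[0,T]\to M$ via $\gamma(t)=x(t/T)$, the path-variation gives, after an integration by parts, a bulk term
\[
\int_0^T \Big(\partial_q L(\gamma,\dot\gamma)-\tfrac{d}{dt}\,\partial_v L(\gamma,\dot\gamma)\Big)\cdot\xi\, dt
\]
plus the boundary term $\big[\,\partial_v L(\gamma,\dot\gamma)\cdot\xi\,\big]_0^T = \partial_v L(\gamma(T),\dot\gamma(T))\cdot\xi(T) - \partial_v L(\gamma(0),\dot\gamma(0))\cdot\xi(0)$, while the $T$-variation produces (by the usual computation for the free-time action) precisely $\big(k - E(\gamma(\cdot),\dot\gamma(\cdot))\big)$ evaluated appropriately — more precisely $\partial_T\A_k = \int_0^1\!\big(L - \tfrac{x'}{T}\!\cdot\!\partial_v L + k\big)ds = -\tfrac1T\int_0^T\!\big(E(\gamma,\dot\gamma)-k\big)dt$.

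Next I would read off the three conditions. Since $\tau$ and $\xi$ vary independently, $d\A_k(x,T)=0$ forces $\partial_T\A_k=0$ and $\partial_x\A_k=0$ separately. Testing $\partial_x\A_k=0$ first against variations $\xi$ with compact support in $(0,T)$ (which are always admissible) gives the Euler-Lagrange equation on the interior in the weak sense; standard Tonelli regularity (the fiberwise strict convexity \eqref{secondinequality} lets one solve for $\dot\gamma$ in terms of $\partial_v L$, hence bootstrap) upgrades the $H^1$ solution to a genuine smooth Euler-Lagrange orbit. With the bulk term now vanishing, $\partial_x\A_k=0$ reduces to the boundary term: $\partial_v L(\gamma(0),\dot\gamma(0))\cdot\xi(0)=\partial_v L(\gamma(T),\dot\gamma(T))\cdot\xi(T)=0$ for all $\xi(0)\in T_{\gamma(0)}Q_0$ and all $\xi(1)\in T_{\gamma(T)}Q_1$ (these endpoint vectors range over the full tangent spaces of $Q_0,Q_1$ since $x$ lies in $H^1_Q$), which is exactly $d_vL(\gamma(0),\dot\gamma(0))|_{T_{\gamma(0)}Q_0}=0$ and $d_vL(\gamma(R),\dot\gamma(R))|_{T_{\gamma(R)}Q_1}=0$ with $R=T$. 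Finally, $\partial_T\A_k=0$ together with the fact that $E$ is constant along the Euler-Lagrange flow gives $E(\gamma,\dot\gamma)\equiv k$. This proves that a critical point yields a connecting orbit with energy $k$; the converse is immediate by running the same first-variation identity backwards — if $\gamma$ solves \eqref{cbc} with energy $k$, the bulk term vanishes by the Euler-Lagrange equation, the boundary term vanishes by the conormal conditions, and the $T$-derivative vanishes because $E\equiv k$, so every admissible variation kills $d\A_k$.

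The one genuinely delicate point — and the step I expect to cause the most trouble to write cleanly — is the regularity upgrade: a priori a critical point of $\A_k$ is only an $H^1$ curve, and one must argue it is $C^2$ (indeed smooth) so that the classical Euler-Lagrange equation and the pointwise boundary conditions even make sense. This is where Tonelli convexity is essential: from the weak Euler-Lagrange equation one gets that $t\mapsto \partial_v L(\gamma(t),\dot\gamma(t))$ is absolutely continuous with an $L^2$ (in fact, once $\dot\gamma$ is bounded, continuous) derivative, and the Legendre transform being a diffeomorphism onto $T^*M$ then lets one solve for $\dot\gamma$ as a continuous function, after which a standard bootstrap gives $\gamma\in C^\infty$. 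Since the paper explicitly defers such background to \cite{AS09} and Proposition \ref{propregularity} already records the $C^{1,1}$ structure of $S_k$, I would invoke those references for the regularity and the boundary-term computation rather than reproving it, and keep the emphasis on correctly identifying which boundary conditions arise from restricting to $\mathcal M_Q$. I would also note explicitly that $\partial(0,+\infty)$ being open is what makes the $T$-variation two-sided, so that $\partial_T\A_k=0$ (an equality, not an inequality) is legitimate.
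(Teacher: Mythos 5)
Your proposal is correct and follows essentially the same route as the paper: split $d\A_k$ into the path direction and the $T$ direction, identify the Euler--Lagrange equation plus the conormal boundary terms from the first variation in $x$ (which the paper simply delegates to \cite{AS09}, including the Tonelli regularity upgrade you spell out), and use $\partial_T\A_k=\frac1T\int_0^T\bigl[k-E(\gamma,\dot\gamma)\bigr]\,dt$ together with conservation of energy to get $E\equiv k$. The extra detail you provide (weak Euler--Lagrange equation, Legendre-transform bootstrap, two-sidedness of the $T$-variation) is consistent with, and merely expands on, the paper's citation-based argument.
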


\begin{proof}
The pair $(x,T)$ is a critical point for $\A_k$ if and only if 
$$d\A_k(x,T)[(\zeta,H)]= 0$$ 
for any choice of $(\zeta,H)$. It is well-known (see e.g. \cite{AS09}) that the condition 
$$d_x\A_k(x,T)[(\zeta,0)]=0$$
is equivalent to $\gamma(t):=x(t/T)\in H^1_Q([0,T],M)$ being an Euler-Lagrange orbit satisfying the conormal boundary conditions (\ref{cbc}). Furthermore, a simple computation shows that
\begin{equation}
\frac{\partial \A_k}{\partial T}(x,T)\ =\ \frac{1}{T}\int_0^T \Big [k-E(\gamma(t),\dot{\gamma}(t))\Big ]\, dt
\label{derivatainT}
\end{equation}
which implies that $E(\gamma,\dot{\gamma})= k$, since the energy is constant along $\gamma$.
\end{proof}

\vspace{4mm} 

\noindent \textbf{Completeness properties for $\A_k$.} Since the Hilbert manifold $\mathcal M_Q$ is not complete, it is useful to know whether sublevel sets of the free-time action functional $\A_k$ are complete or not. With the next lemma we see that completeness on a given connected component $\mathcal N$ of $\mathcal M_Q$ only depends 
on the fact that $\mathcal N$ contains constant paths or not. 

\begin{lemma}
The following statements hold:

\begin{enumerate}
\item The sublevel sets of $\A_k$  in each connected component $\mathcal N$ of $\mathcal M_Q$  not containing constant paths are complete.
\item If $(x_h,T_h)$ is such that $T_h\rightarrow 0$, then
\begin{equation}
\liminf_{h\rightarrow +\infty} \A_k(x_h,T_h) \geq 0.
\label{liminf}
\end{equation}
\end{enumerate}
\label{completeness0}
\end{lemma}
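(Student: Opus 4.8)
For part (2), the natural idea is to use the coercivity estimate \eqref{firstinequality}. Writing $\A_k(x_h,T_h) = T_h\int_0^1 [L(x_h,x_h'/T_h)+k]\,ds$ and applying $L(q,v)\geq a|v|^2-b$ with $v = x_h'(s)/T_h$, I get
\[
\A_k(x_h,T_h) \;\geq\; T_h\int_0^1\Big[\tfrac{a}{T_h^2}|x_h'(s)|^2 - b + k\Big]\,ds \;=\; \frac{a}{T_h}\int_0^1|x_h'(s)|^2\,ds + (k-b)T_h.
\]
The first term is nonnegative, and the second tends to $0$ as $T_h\to 0$. Hence $\liminf_h \A_k(x_h,T_h)\geq 0$. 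This is the easy part and essentially immediate.

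For part (1), let $\mathcal N$ be a connected component of $\mathcal M_Q$ that contains no constant paths, and let $(x_h,T_h)$ be a Cauchy sequence in $\mathcal N$ lying in a sublevel set $\{\A_k\leq C\}$, with respect to the incomplete metric $g_{\mathcal M} = g_{H^1}+dT^2$. Since $H^1([0,1],M)$ is complete and $(0,+\infty)$ fails to be complete only at $T=0$, the only obstruction to $(x_h,T_h)$ converging in $\mathcal M_Q$ is the possibility that $T_h\to T_\infty = 0$ while $x_h\to x_\infty$ in $H^1_Q([0,1],M)$. So I must rule out this scenario. The bound $\A_k(x_h,T_h)\leq C$ together with the estimate above gives $\frac{a}{T_h}\int_0^1|x_h'|^2\,ds \leq C - (k-b)T_h$, which stays bounded; since $T_h\to 0$ this forces $\int_0^1|x_h'(s)|^2\,ds\to 0$. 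Thus $x_\infty$ has $\int_0^1|x_\infty'(s)|^2\,ds = 0$, i.e.\ $x_\infty$ is a constant path. But a Cauchy sequence $x_h\to x_\infty$ in $H^1$ stays (for large $h$) in the connected component of $x_\infty$; since $x_\infty$ is constant, that component is the one containing constant paths, contradicting the hypothesis that $\mathcal N$ contains no constant paths (here I use that $x_h\in\mathcal N$ for all $h$, so they all lie in the same component, namely $\mathcal N$). Hence $T_h$ cannot go to $0$: it is bounded away from $0$, $(0,+\infty)$ is complete away from $0$, and the sequence converges in $\mathcal M_Q$, so the sublevel set is complete.

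The main point to be careful about — and the only place where a little thought is needed — is the topological step: passing from ``$x_\infty$ is constant'' to ``the limit would lie in the component containing constant paths, contradicting $\mathcal N$ having none''. One should note that the issue is that the limit point $(x_\infty, 0)$ is not in $\mathcal M_Q$ at all, so ``$\mathcal N$ contains no constant paths'' must be interpreted as: the closure of $\mathcal N$ in $H^1_Q([0,1],M)\times[0,+\infty)$ contains no pair $(x,T)$ with $x$ constant (equivalently, the $H^1$-closure of $\{x : (x,T)\in\mathcal N\}$ in $H^1_Q$ contains no constant path). Under this reading — which is the one implicitly used throughout the paper, since constant paths are what cause non-completeness — the argument closes. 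One should also confirm that $x_h\to x_\infty$ in $H^1$ genuinely implies the $x_h$ are eventually in the same path-component of $H^1_Q([0,1],M)$ as $x_\infty$, which holds because $H^1_Q$ is a Hilbert manifold, hence locally path-connected, so path-components are open.
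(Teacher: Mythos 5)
Your part (2) is exactly the paper's argument: the same coercivity estimate coming from \eqref{firstinequality}. For part (1) you take a genuinely different route. The paper deduces completeness from a geometric fact it states without proof, namely that on a component $\mathcal N$ containing no constant paths the length $l(x)$ is bounded away from zero, so that $\A_k(x,T)\geq \frac{a}{T}l(x)^2+T(k-b)$ forces $T$ to be bounded away from zero on every sublevel set, which then sits as a closed subset of the complete space $H^1_Q([0,1],M)\times[T_*,+\infty)$. You instead argue by contradiction along a Cauchy sequence: if $T_h\to 0$, the same estimate forces $\int_0^1|x_h'(s)|^2\,ds\to 0$, so the $H^1$-limit $x_\infty$ is a constant path (whose value lies in $Q_0\cap Q_1$ since $x_\infty(0)\in Q_0$, $x_\infty(1)\in Q_1$), and local path-connectedness of the Hilbert manifold $H^1_Q([0,1],M)$ places $x_\infty$ in the same component as the $x_h$ — a contradiction. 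Both proofs hinge on the same estimate; yours buys a self-contained justification of the point the paper leaves implicit (why small $T$, equivalently small length, cannot occur in $\mathcal N$), at the cost of explicit Cauchy-sequence bookkeeping.

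One remark: your closing worry that the hypothesis must be reinterpreted via a closure in $H^1_Q([0,1],M)\times[0,+\infty)$ is unnecessary. Connected components of $H^1_Q([0,1],M)$ are open, hence also closed, so the constant limit $x_\infty$ lies in the component $\mathcal N_0$, where $\mathcal N=\mathcal N_0\times(0,+\infty)$; then $(x_\infty,T)\in\mathcal N$ for every $T>0$ is itself a constant path of $\mathcal M_Q$, contradicting the hypothesis exactly as stated, with no passage to a closure.
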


\begin{proof}
By (\ref{firstinequality}) we have the chain of inequalities
\begin{align}
\A_k(x,T) &= T \int_0^1 \Big  [L\Big (x(s),\frac{x'(s)}{T}\Big ) + k\Big ] ds \nonumber  \\
		&\geq  T\int_0^1 \Big [ a \, \frac{|x'(s)|^2}{T^2} - b + k \Big ] ds  \nonumber \\
                &= \frac{a}{T} \int_0^1 |x'(s)|^2 \, ds +T(k-b) \nonumber \\ 
                &\geq \frac{a}{T} l(x)^2 + T(k-b) \label{stimaazione}
\end{align}
where $l(x)$ denotes the length of the path $x$. Since $\mathcal N$ does not contain constant paths, the length of any path in $\mathcal N$ is bounded away from zero 
by a suitable positive constant. Therefore, $T$ is bounded away from zero on
$$\Big \{(x,T) \in \mathcal N\  \Big | \ \A_k(x,T) \leq c\Big \}$$
for any $c\in \R$, proving the statement.

Inequality (\ref{stimaazione}) actually also proves the second statement. In fact, if $ T_h\rightarrow 0$ then 
$$ T_h(k-b) \longrightarrow 0 , \quad \text{for} \  h\longrightarrow +\infty$$
and hence the action $\A_k(x_h,T_h)$ is eventually bigger than $-\epsilon$, for arbitrary $\epsilon>0$.
\end{proof}

\vspace{2mm}

\begin{cor}
If $c<0$, then the sublevel set $\{\A_k\leq c\}$ is complete.
\label{cornegative}
\end{cor}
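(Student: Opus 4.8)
The plan is to deduce completeness of $\{\A_k \le c\}$ for $c<0$ from Lemma \ref{completeness0} by checking it component by component. Let $\mathcal{N}$ be any connected component of $\mathcal{M}_Q$ and consider a Cauchy sequence $(x_h,T_h)$ in $\{\A_k\le c\}\cap\mathcal{N}$ with respect to the product metric $g_{\mathcal M}$. If $\mathcal N$ does not contain constant paths, then part (1) of Lemma \ref{completeness0} already says that all sublevel sets of $\A_k$ in $\mathcal N$ are complete, so the sequence converges in $\mathcal N$ and we are done for such components.

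The remaining case is when $\mathcal N$ does contain constant paths. The only way completeness can fail in $\mathcal{M}_Q = H^1_Q([0,1],M)\times(0,+\infty)$ is that the $H^1$-component converges in $H^1_Q([0,1],M)$ (which is complete) while $T_h\to 0$, so the limit point falls out of the open factor $(0,+\infty)$. But if $T_h\to 0$ then part (2) of Lemma \ref{completeness0} gives
$$\liminf_{h\to+\infty}\A_k(x_h,T_h)\ \ge\ 0,$$
which contradicts $\A_k(x_h,T_h)\le c<0$ for all $h$. Hence $T_h$ cannot converge to $0$; since $(T_h)$ is Cauchy in $\R$, it converges to some $T_\infty>0$, and together with the convergence of $x_h$ in $H^1_Q([0,1],M)$ this shows $(x_h,T_h)$ converges in $\mathcal{M}_Q$ to a point of $\mathcal N$. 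Finally, $\A_k$ is continuous (indeed $C^{1,1}$ by Proposition \ref{propregularity}), so the limit lies in $\{\A_k\le c\}$, establishing completeness.

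I do not expect a genuine obstacle here: the corollary is essentially a repackaging of Lemma \ref{completeness0}, the point being that a strictly negative threshold $c$ rules out the only mechanism ($T_h\to 0$) by which a Cauchy sequence could escape $\mathcal{M}_Q$, and on components without constant paths the issue does not arise at all. The only mild care needed is to phrase the argument uniformly over both types of connected components and to note explicitly that $H^1_Q([0,1],M)$ is itself complete so that no escape can happen ``in the $x$-direction''.
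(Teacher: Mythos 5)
Your proposal is correct and follows essentially the same route as the paper, which simply observes that the corollary follows from Statement 2 of Lemma \ref{completeness0}: a Cauchy sequence in $\{\A_k\leq c\}$ with $c<0$ cannot have $T_h\to 0$, and then convergence in the complete factor $H^1_Q([0,1],M)$ together with $T_h\to T_\infty>0$ gives a limit point, which lies in the sublevel set by continuity of $\A_k$. Your write-up just makes explicit the Cauchy-sequence bookkeeping that the paper leaves implicit.
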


\begin{proof}
Follows directly from Statement 2 in Lemma \ref{completeness0}.
\end{proof}

\vspace{2mm}

We end this section studying the possible sources of non-completeness of the negative gradient flow of $\A_k$ on $\mathcal M_Q$.  Up to changing $-\nabla \A_k$ with 
the conformally equivalent bounded vector field
$$- \frac{\nabla \A_k}{\sqrt{1+|\nabla \A_k|^2}}\, $$
we may assume the negative gradient flow to be complete on every connected component of $\mathcal M_Q$ not containing constant paths. Also, on the connected components containing constant paths, 
incompleteness occurs only if there are flow-lines for which $T(\cdot) \rightarrow 0$ in finite time. The next lemma ensures that, for such flow lines, $\A_k$ necessarily goes to zero.

\begin{lemma}
Let $\big (x(\cdot),T(\cdot)\big ):[0,\sigma^*)\rightarrow \mathcal M_Q$ be a negative gradient flow-line with 
$$\liminf_{\sigma\rightarrow \sigma^*} T(\sigma)= 0 .$$
\noindent Then 
$$\lim_{\sigma \rightarrow \sigma^*} \A_k\big (x(\sigma),T(\sigma)\big )= 0 .$$
\label{convergenza}
\end{lemma}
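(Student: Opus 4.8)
The plan is to exploit the elementary lower bound \eqref{stimaazione} together with the fact that along a negative gradient flow-line the action $\A_k$ is monotonically non-increasing. First I would observe that, by Lemma \ref{completeness0}(2), for the given flow-line one has $\liminf_{\sigma\to\sigma^*}\A_k(x(\sigma),T(\sigma))\geq 0$ along any sequence $\sigma_h\to\sigma^*$ with $T(\sigma_h)\to 0$; since the liminf of $T(\sigma)$ is zero, at least one such sequence exists, and therefore $\liminf_{\sigma\to\sigma^*}\A_k(x(\sigma),T(\sigma))\geq 0$. On the other hand, $\sigma\mapsto\A_k(x(\sigma),T(\sigma))$ is non-increasing (its derivative is $-|\nabla\A_k|^2/\sqrt{1+|\nabla\A_k|^2}\leq 0$), hence it has a limit $\ell\in[-\infty,+\infty)$ as $\sigma\to\sigma^*$, and the liminf bound forces $\ell\geq 0$.

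It remains to rule out $\ell>0$, i.e.\ to show the limit is exactly zero. The idea is that if $\A_k$ stayed bounded below by a positive constant along the flow-line, then the \emph{length} of $\A_k$ traversed (the integral of $|\nabla\A_k|^2$ suitably normalized) would be finite, which together with $\liminf T(\sigma)=0$ should be used to control the behaviour of $T(\sigma)$. More precisely, I would argue by contradiction: suppose $\ell>0$. Since $\A_k(x(\sigma),T(\sigma))\searrow\ell>0$, the value of $T(\sigma)$ cannot actually converge to $0$ along the whole net if $\A_k$ stays large (by \eqref{stimaazione}, a small $T$ forces the action close to $0^+$ or, if $l(x)$ does not shrink, makes $\frac{a}{T}l(x)^2$ blow up — in either case the action cannot sit at a positive constant $\ell$ unless $l(x(\sigma))\to 0$ at a compatible rate). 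So the flow-line must oscillate: $T(\sigma)$ returns near $0$ along a sequence but also stays bounded away from $0$ along another sequence, since $\limsup_{\sigma\to\sigma^*}T(\sigma)$ could still be positive. Here I would invoke the finiteness of $\int_0^{\sigma^*}|\dot\gamma_{\mathrm{flow}}|\,d\sigma$ — which follows from $\int_0^{\sigma^*}|\nabla\A_k|\,d\sigma<\infty$, itself a consequence of $\A_k$ being non-increasing and bounded below — to show that $T(\sigma)$ is actually a Cauchy net as $\sigma\to\sigma^*$, hence has a genuine limit, which must then be $0$ by hypothesis. With $T(\sigma)\to 0$ genuinely, a refinement of \eqref{stimaazione} (noting $T(k-b)\to 0$ and $\frac aT l(x(\sigma))^2\geq 0$) shows the action can only accumulate at values $\geq$ something tending to $0$ from below, i.e.\ $\limsup_{\sigma\to\sigma^*}\A_k\leq 0$; combined with $\ell\geq 0$ this gives $\ell=0$, the desired contradiction with $\ell>0$.

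The technical heart, and the step I expect to be the main obstacle, is the passage from "$\liminf T(\sigma)=0$ and $\A_k$ has finite negative-gradient length" to "$T(\sigma)$ has a limit, necessarily $0$." The subtlety is that, a priori, the flow-line could wander with $T$ oscillating between $0$ and some positive value; ruling this out requires using that the normalized gradient flow has finite length on $[0,\sigma^*)$ (which bounds the total variation of $T(\sigma)$, since $|dT/d\sigma|\leq$ the speed of the flow-line) so that $T(\sigma)$ is Cauchy. One must be careful that the bound on $\int|\nabla\A_k|^2$ controls $\int|\nabla\A_k|$ only on the finite interval $[0,\sigma^*)$ via Cauchy–Schwarz when $\sigma^*<\infty$, and handle separately the case $\sigma^*=+\infty$ — but in that case the flow-line is complete, so incompleteness in finite time is the only scenario where $T\to 0$, and there $\sigma^*<\infty$, making Cauchy–Schwarz applicable. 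Once $T(\sigma)\to 0$ genuinely, the conclusion follows from \eqref{stimaazione} exactly as in the proof of Lemma \ref{completeness0}(2).
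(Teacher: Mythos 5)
Your first step is fine: monotonicity of $\A_k$ along the flow-line plus Lemma \ref{completeness0}(2) applied to a sequence with $T\to 0$ does show that the limit $\ell$ exists and satisfies $\ell\geq 0$. The gap is in the second half, where you must rule out $\ell>0$. Your plan is to show that $T(\sigma)\to 0$ genuinely (via finite length of the flow-line) and then to conclude $\limsup_{\sigma\to\sigma^*}\A_k\leq 0$ from ``a refinement of \eqref{stimaazione}''. This last step cannot work: \eqref{stimaazione} is a \emph{lower} bound, $\A_k(x,T)\geq \frac{a}{T}l(x)^2+T(k-b)$, and from $T\to 0$ it yields only $\liminf \A_k\geq 0$, never an upper bound. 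Nothing in that estimate excludes a flow-line with $T(\sigma)\to 0$, $l(x(\sigma))^2\sim \ell\, T(\sigma)/a$ and $\A_k\to\ell>0$; so knowing that $T$ converges to zero is simply not enough. What is missing is an \emph{upper} bound for the action in terms of $T$, and this is the actual content of the paper's proof: using that $E\geq c_0L-c_1$ (both being quadratic at infinity) in the identity \eqref{derivatainT}, one gets $\A_k(x,T)\leq \frac{T}{c_0}\bigl[C-\frac{\partial \A_k}{\partial T}(x,T)\bigr]$; then one picks a sequence $\sigma_h\uparrow\sigma^*$ with $T(\sigma_h)\to 0$ \emph{and} $T'(\sigma_h)\leq 0$, notes that along a negative gradient flow-line $T'=-\partial\A_k/\partial T$, so the bracket is at most $C$, and concludes $\A_k(x(\sigma_h),T(\sigma_h))\leq \frac{C}{c_0}T(\sigma_h)\to 0$. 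Combined with monotonicity and the lower bound this forces $\ell=0$. Your proposal never produces any such upper bound, so the contradiction with $\ell>0$ is not reached.

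A second, lesser problem: your reduction to $\sigma^*<\infty$ is not justified. The hypothesis $\liminf_{\sigma\to\sigma^*}T(\sigma)=0$ is perfectly compatible with $\sigma^*=+\infty$ (the lemma is stated and used in that generality), and in that case your Cauchy--Schwarz argument for finite length fails, so you cannot even conclude that $T(\sigma)$ has a limit. Note that the paper's argument needs no such dichotomy and no convergence of $T(\sigma)$: it only uses a single sequence of times at which $T$ is small and non-increasing, which exists directly from the liminf hypothesis.
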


\vspace{-5mm}

\begin{proof}
The proof is analogous to the one of Lemma 3.3 in \cite{Abb13}, where the case of periodic orbits is considered.
Since both $E$ and $L$ are quadratic at infinity, we have
$$E(q,v)\geq c_0  L(q,v)-c_1$$
for some $c_0>0$ and $c_1\in \R$. Therefore from (\ref{derivatainT}) it follows that
\begin{align*}
\frac{\partial \A_k}{\partial T} (x,T) &= \frac{1}{T}\int_0^T \Big [k-E(\gamma(t),\dot{\gamma}(t))\Big ]\, dt \\
&\leq \frac{1}{T} \int_0^T \Big [k-c_0\, L(\gamma(t),\dot{\gamma}(t))+c_1\Big ]\, dt\\
&= (c_0+1) k + c_1 - \frac{c_0}{T} \A_k(x,T)
\end{align*}
and hence 
\begin{equation}
\A_k(x,T) \leq \frac{T}{c_0}\, \Big [(c_0+1) k + c_1 - \frac{\partial \A_k}{\partial T}(x,T)\Big ]= \frac{T}{c_0}  \Big [C-\frac{\partial \A_k}{\partial T}(x,T)\Big ] ,
\label{stima}
\end{equation}
where $C$ is a suitable constant. By assumption, there is a sequence $\sigma_h\uparrow \sigma^*$ with 
$$T'(\sigma_h) \leq 0 , \quad  T(\sigma_h)\longrightarrow 0 .$$
Since $\sigma\mapsto (x(\sigma),T(\sigma))$ is a negative gradient flow-line, we have
$$0\geq T'(\sigma_h)=-\frac{\partial \A_k}{\partial T}\big (x(\sigma_h),T(\sigma_h)\big )$$ 
and hence 
$$\A_k\big (x(\sigma_h),T(\sigma_h)\big ) \leq \frac{T(\sigma_h)}{c_0} \Big [C-\frac{\partial \A_k}{\partial T}\big ( x(\sigma_h),T(\sigma_h)\big )\Big ] \leq \frac{C}{c_0}\, T(\sigma_h) .$$
Since $T(\sigma_h)\rightarrow 0$, from the inequality above we deduce that
$$\limsup_{h\rightarrow +\infty} \A_k\big (x(\sigma_h),T(\sigma_h)\big ) \leq 0 .$$ 

The assertion follows now from Statement 2 in Lemma \ref{completeness0} and from the monotonicity of the function $\sigma \mapsto \A_k(x(\sigma),T(\sigma))$.
\end{proof}

\vspace{2mm}

\noindent \textbf{The Palais-Smale condition for $\A_k$.} Recall that a Palais-Smale sequence at level $c$ for $\A_k$ is a sequence $(x_h,T_h)\subseteq \mathcal M_Q$ such that 
$$\A_k(x_h,T_h)\longrightarrow c , \quad|d\A_k(x_h,T_h)|\longrightarrow 0 ,$$
where $|\cdot|$ denotes the norm on $T^*\mathcal M_Q$ induced by the Riemannian metric $g_{\mathcal M}$ in \eqref{productmetric}.

When looking for critical points of $\A_k$ (and more generally 
of a given functional defined on a Hilbert manifold) it is natural to consider Palais-Smale sequences as a ``source of critical points'', since their limit points are by definition critical points. 
However, it is in general not true that Palais-Smale sequences have limit points. Therefore, it is worth looking for necessary and sufficient conditions for a Palais-Smale sequence to admit converging subsequences. 
Palais-Smale sequences with times going to zero surely do not possess limit points. However, they might occur only in connected components of $\mathcal M_Q$ that contain constant paths, as Lemma \ref{completeness0} shows. The next lemma ensures also that such Palais-Smale sequences may appear only at level zero. 

\begin{lemma}
Let $\gamma_h=(x_h,T_h)$ be a Palais-Smale sequence at level $c\in \R$ for $\A_k$ such that $T_h\rightarrow 0$. Then necessarily $c=0$.
\label{yesmodification}
\end{lemma}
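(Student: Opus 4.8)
The plan is to squeeze $c$ between $0$ and $0$ using, on the one hand, Statement 2 of Lemma \ref{completeness0}, and on the other hand the already-established inequality \eqref{stima} together with the Palais-Smale condition. Since $T_h\to 0$, Statement 2 of Lemma \ref{completeness0} gives immediately
$$c=\lim_{h\to+\infty}\A_k(x_h,T_h)=\liminf_{h\to+\infty}\A_k(x_h,T_h)\geq 0,$$
so only the reverse inequality $c\leq 0$ needs work.

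For the reverse inequality I would first observe that the bound \eqref{stima}, namely
$$\A_k(x,T)\leq \frac{T}{c_0}\Big[C-\frac{\partial \A_k}{\partial T}(x,T)\Big],$$
was derived purely from the expression \eqref{derivatainT} for $\partial_T\A_k$ and from the quadratic-at-infinity estimate $E\geq c_0 L-c_1$; hence it is valid at every point of $\mathcal M_Q$, in particular at each $(x_h,T_h)$. Next, since $g_{\mathcal M}=g_{H^1}+dT^2$ is a product metric, the $T$-component of the differential is dominated by its full norm, so from $|d\A_k(x_h,T_h)|\to 0$ we get
$$\Big|\frac{\partial \A_k}{\partial T}(x_h,T_h)\Big|\longrightarrow 0.$$
In particular the bracket $C-\frac{\partial \A_k}{\partial T}(x_h,T_h)$ is bounded, and multiplying by $T_h/c_0\to 0$ yields $\limsup_{h\to+\infty}\A_k(x_h,T_h)\leq 0$, i.e.\ $c\leq 0$. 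Together with $c\geq 0$ this forces $c=0$.

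There is essentially no serious obstacle here: the statement follows by combining two facts that are already in hand (the lower bound on the action for small $T$, and the pointwise inequality \eqref{stima}) with the trivial remark that the $\partial_T$-part of a Palais-Smale sequence tends to zero. The only point deserving a word of care is to note explicitly that \eqref{stima} is a general estimate on $\mathcal M_Q$ and not something tied to the gradient-flow context in which it first appeared, and that one really only uses the $T$-direction of the Palais-Smale hypothesis — the behaviour of the $x$-component plays no role.
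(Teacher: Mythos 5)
Your argument is correct. Both inequalities you need do hold: \eqref{stima} is indeed a pointwise estimate on $\mathcal M_Q$, since its derivation in the proof of Lemma \ref{convergenza} uses only \eqref{derivatainT} and the bound $E\geq c_0L-c_1$ coming from quadratic growth, with the gradient-flow structure entering only afterwards; and since $g_{\mathcal M}$ is the product metric \eqref{productmetric}, $|\partial_T\A_k(x_h,T_h)|\leq |d\A_k(x_h,T_h)|\to 0$, so the bracket in \eqref{stima} stays bounded and the factor $T_h/c_0\to 0$ kills the right-hand side. Combined with Lemma \ref{completeness0}\,(2) and the fact that $\A_k(x_h,T_h)\to c$, this pins $c=0$.

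Your route differs from the paper's in packaging rather than in substance, but the difference is worth recording. The paper's proof works directly: from $|\partial_T\A_k(x_h,T_h)|\to 0$ and $E\geq a'|v|^2-b'$ it first extracts the kinetic estimate \eqref{stimacont}, $\int_0^{T_h}|\dot\gamma_h|^2\,dt=O(T_h)$, and then sandwiches the action between $a\int|\dot\gamma_h|^2+T_h(k-b)$ and $\tilde a\int|\dot\gamma_h|^2+T_h(k+\tilde b)$, obtaining the slightly stronger conclusion $\A_k(x_h,T_h)=O(T_h)$. You instead recycle two facts already on record — the one-sided bound of Lemma \ref{completeness0}\,(2) for $c\geq 0$ and inequality \eqref{stima} for $c\leq 0$ — so your proof is shorter, avoids the upper quadratic bound on $L$ and the intermediate velocity estimate, and makes explicit that only the $T$-direction of the Palais-Smale hypothesis is used (which is also true, though less visibly, of the paper's argument). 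What you give up is only the quantitative rate $O(T_h)$, which the paper does not use elsewhere, so nothing is lost for the purposes of the lemma.
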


\begin{proof}
First  we prove that
\begin{equation}
\int_0^{T_h} |\dot \gamma_h(t)|^2 \, dt = O(T_h) ,\quad \text{for} \ h\rightarrow +\infty .
\label{stimacont}
\end{equation}
Being $(x_h,T_h)$ a Palais-Smale sequence for $\A_k$, we have 
$$|d\A_k(x_h,T_h)| = o(1) ,\quad \text{for} \ h\rightarrow +\infty .$$
In particular, using (\ref{derivatainT}) we get that 
$$\left | d\A_k(x_h,T_h)\Big [\frac{\partial}{\partial T}\Big ] \right |= \left | \frac{\partial \A_k}{\partial T}(x_h,T_h) \right |= \left | \frac{1}{T_h} \int_0^{T_h} \Big [E\big (\gamma_h(t),\dot \gamma_h(t)\big ) - k \Big ]\, dt \right |=o(1)$$
and hence 
\begin{equation}
\alpha_h:=  \frac{1}{T_h}\int_0^{T_h} \Big [E\big (\gamma_h (t),\dot \gamma_h(t)\big ) -k\Big ]\, dt  \longrightarrow 0.
\label{convenergia}
\end{equation}

Now by assumption $E$ is quadratic at infinity and hence $E(q,v)\geq a' |v|^2 - b',$ for some $a'>0$ and $b'\in \R$. Using this in (\ref{convenergia}) we get that 
$$\alpha_h = \frac{1}{T_h}\int_0^{T_h} \Big [E\big (\gamma_h(t),\dot \gamma_h(t)\big )-k\Big ]\, dt \geq  \frac{1}{T_h} \int_0^{T_h} \Big [a'\, |\dot \gamma_h(t)|^2 - b'-k\Big ]\, dt$$
and hence 
$$\int_0^{T_h} |\dot \gamma_h(t)|^2 \, dt \leq \frac{T_h}{a'} \big [\alpha_h + b'+k\big ],$$
which implies (\ref{stimacont}). Since also $L$ is quadratic at infinity we have 
$$a |v|^2 - b \leq L(q,v) \leq \tilde{a} |v|^2 + \tilde{b}$$ 
for some constants $a,\tilde{a} >0$ and $b,\tilde{b}\in \R$. The first inequality implies 
\begin{align*}
\A_k(x_h,T_h) &=  \int_0^{T_h} \Big [ L(\gamma_h(t),\dot \gamma_h(t))+k \Big ]\, dt \\
                        &\geq a \int_0^{T_h} |\dot \gamma_h(t)|^2\, dt + T_h(k-b) = O(T_h)
                        \end{align*}
while the second yields
\begin{align*}
\A_k(x_h,T_h) &=   \int_0^{T_h} \Big [ L(\gamma_h(t),\dot \gamma_h(t))+k \Big ]\, dt \\
		       &\leq \tilde{a} \int_0^{T_h} |\dot \gamma_h(t)|^2\, dt + T_h(k+\tilde{b}) =O(T_h)
		       \end{align*}
and hence obviously $\A_k(x_h,T_h) \rightarrow 0$.
\end{proof}

\vspace{2mm}

The following lemma ensures the existence of converging subsequences for any Palais-Smale sequence with times bounded and bounded away from zero. The proof is analogous (with some minor adjustments) to the one of 
\cite[Proposition 3.12]{Con06} (or \cite[Lemma 5.3]{Abb13}), where the case of periodic orbits is considered; see \cite[Lemma 3.2.2]{Ass15} for the details. This lemma combined with Lemma \ref{yesmodification} above shows that the only Palais-Smale sequences at level $c\neq 0$ which may cause troubles are those for which the times diverge.

\begin{lemma}
Let $(x_h,T_h)$ be a Palais-Smale sequence at level $c\in \R$ for $\A_k$ in some connected component of $\mathcal M_Q$ with $\, 0<T_*\leq T_h\leq T^*<+\infty$. Then, $(x_h,T_h)$ is compact in $\mathcal M_Q$, meaning that it admits a converging subsequence. 
\label{lemmalimitatezza}
\end{lemma}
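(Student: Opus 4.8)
The statement is a standard Palais-Smale compactness result: for a PS sequence $(x_h,T_h)$ with times confined to a compact subinterval $[T_*,T^*]\subseteq(0,+\infty)$, I want to extract a subsequence converging in $\mathcal M_Q$. The strategy follows the classical argument for the fixed-time action functional, adapted to the free-time setting. First I would obtain a uniform $H^1$-bound on the path component $x_h$: from the coercivity estimate \eqref{stimaazione}, namely $\A_k(x_h,T_h)\geq \frac{a}{T_h}\int_0^1|x_h'|^2\,ds+T_h(k-b)$, together with $\A_k(x_h,T_h)\to c$ and $T_*\leq T_h\leq T^*$, one reads off that $\int_0^1|x_h'|^2\,ds$ is bounded. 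Since $M$ is compact this gives a uniform bound on $\|x_h\|_{H^1}$. Passing to a subsequence, $T_h\to T_\infty\in[T_*,T^*]$, and $x_h\rightharpoonup x_\infty$ weakly in $H^1$ and strongly in $C^0$ (by Rellich, since $H^1([0,1],M)\hookrightarrow C^0$ is compact). Because $Q_0,Q_1$ are closed, the $C^0$-limit $x_\infty$ still lies in $H^1_Q([0,1],M)$, so the candidate limit $(x_\infty,T_\infty)$ belongs to $\mathcal M_Q$.

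The heart of the proof is upgrading weak $H^1$-convergence of $x_h$ to strong $H^1$-convergence. Here one uses the Palais-Smale condition: $d\A_k(x_h,T_h)\to 0$ in $T^*\mathcal M_Q$, tested against the variation $(\zeta_h,0)$ with $\zeta_h := \exp_{x_h}^{-1}(x_\infty)$ (a vector field along $x_h$ vanishing on the boundary data, since $x_h,x_\infty$ share endpoints up to the submanifolds $Q_i$ — more precisely one arranges the test vector field to be tangent to $Q_i$ at the endpoints so it is an admissible variation for $\mathcal M_Q$). The dominant term in $d_x\A_k(x_h,T_h)[(\zeta_h,0)]$ is $\frac{1}{T_h}\int_0^1 d_{vv}L(x_h,x_h'/T_h)[x_h',\zeta_h']\,ds$, and the convexity bound \eqref{secondinequality} forces $\int_0^1|x_h'-x_\infty'|^2\,ds\to 0$ once the lower-order terms (involving $d_qL$, the metric connection terms from the Levi-Civita connection on $M$, and the $C^0$-convergence of $x_h$) are shown to vanish. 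The bound $T_h\geq T_*>0$ is exactly what prevents the $1/T_h$ factors from blowing up, and $T_h\leq T^*$ keeps the fiber variable $x_h'/T_h$ in the region where \eqref{secondinequality} is available. This is the genuinely technical step, and it is where the reference to \cite[Proposition 3.12]{Con06}, \cite[Lemma 5.3]{Abb13}, or \cite[Lemma 3.2.2]{Ass15} does the real work; I would either reproduce that computation in local charts or cite it, noting that the only change from the periodic case is the presence of the conormal boundary terms, which vanish in the limit because the boundary contribution $d_vL(x_h(0),\cdot)|_{T Q_0}$ and its counterpart at $s=1$ converge (by $C^0$-convergence of endpoints and continuity of $d_vL$) and are tested against variations tangent to $Q_0,Q_1$.

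Finally, with $x_h\to x_\infty$ strongly in $H^1$ and $T_h\to T_\infty$, we get $(x_h,T_h)\to(x_\infty,T_\infty)$ in $\mathcal M_Q$ with respect to the product metric $g_{\mathcal M}$ of \eqref{productmetric}, which is the asserted compactness.

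\textbf{Main obstacle.} The delicate point is the strong-convergence step: controlling the error terms in $d_x\A_k$ — in particular the Christoffel-symbol contributions coming from differentiating in a chart and the term $\int_0^1 d_qL(x_h,x_h'/T_h)[\zeta_h]\,ds$ (which is only bounded, not obviously small, a priori) — and showing that after subtracting the convex leading term they all go to zero using $C^0$-convergence of $x_h$, the uniform $H^1$-bound, and $T_h$ bounded away from $0$ and $\infty$. Everything else (coercivity $\Rightarrow H^1$-bound, Rellich, closedness of $Q_i$) is routine.
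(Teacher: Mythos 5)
Your proposal is correct and follows essentially the same route as the paper, which does not spell out the argument but defers precisely to \cite[Proposition 3.12]{Con06}, \cite[Lemma 5.3]{Abb13} and \cite[Lemma 3.2.2]{Ass15} for the standard scheme you describe: coercivity plus $T_*\leq T_h\leq T^*$ gives the uniform $H^1$-bound, weak $H^1$/strong $C^0$ convergence of a subsequence with limit in $\mathcal M_Q$ (closedness of $Q_0,Q_1$), and the convexity bound \eqref{secondinequality} tested against $d\A_k(x_h,T_h)\to 0$ upgrades to strong $H^1$-convergence, the conormal boundary terms being handled by admissible variations tangent to $Q_0,Q_1$. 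Your identification of the delicate step (and of where the cited computations carry the load) matches the paper's treatment, so nothing further is needed.
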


%%%%%%%%%%%%%%%%%%%%%%%%%%%%%%%%%%%%%%%%%%%%%%%%

\section{The Ma\~n\'e critical value $c(L;Q_0,Q_1)$}
\label{themanecriticalvalueclq0q1}

The following numbers should be interpreted as energy levels and mark important dynamical and geometric changes for the Euler-Lagrange flow induced by the Tonelli Lagrangian $L$. The reader may take a look at 
\cite{Abb13} or \cite{CI99} for a survey on the relevance of these energy values and on their relation with the geometric and dynamical properties of the Euler-Lagrange flow. 
First, let us define the \textit{Ma\~n\'e critical value} associated to $L$ as 
\begin{equation}
c(L) := \inf \Big \{ k\in \R \ \Big |\ S_k(\gamma)\geq 0,\ \forall \gamma \text{ closed loop} \Big \}.
\label{c(L)}
\end{equation}
Second, we recall the definition of the \textit{Ma\~n\'e critical value of the Abelian cover} 
\begin{equation}
c_0(L) := \inf \Big \{ k\in \R \ \Big |\ S_k(\gamma)\geq 0 ,\ \forall \gamma \text{ closed loop homologous to zero} \Big \} .
\label{c0(L)}
\end{equation}

This is the relevant energy value, for instance, when trying to use methods coming from Finsler geometry. Indeed, for every $k>c_0(L)$ the Euler-Lagrange flow restricted to the energy level $E^{-1}(k)$ is conjugated to the geodesic flow defined by a suitable Finsler metric (see \cite{Abb13} for the details). 
On the other hand, for exact magnetic flows (i.e. Euler-Lagrange flows associated with magnetic Lagrangians) on surfaces, if $k<c_0(L)$ then there exist periodic orbits 
with energy $k$ which are local minimizers of the free-period Lagrangian action functional, as explained in 
 \cite{AMMP14} and in  \cite{CMP04}. One would be tempted to say that, at least on surfaces and for 
a suitable range of energies, a similar existence result of local minimizers for the free-time action functional $\A_k$ should hold also in our setting; this is unfortunately not the case, as we will see in Section \ref{counterexamples}.

When looking for periodic orbits, the energy value value which turns out to be relevant for the properties of the free-period action functional (see again \cite{Abb13} or \cite{Con06}) is however the so-called \textit{Ma\~n\'e critical value of the universal cover}
\begin{equation}
c_u(L) :=\inf \Big \{ k\in \R \ \Big |\ S_k(\gamma)\geq 0 ,\ \forall \gamma \text{ closed contractible loop} \Big \}.
\label{cu(L)}
\end{equation}
We also define 
\begin{equation}
e_0(L):= \max_{q\in M} E(q,0)
\label{e0(L)}
\end{equation}
to be the maximum of the energy on the zero section of $TM$. The topology of the energy level sets changes when crossing the value $e_0(L)$. In fact, for any $k>e_0(L)$, the energy level sets $E^{-1}(k)$ have all the same topology,
namely of a sphere bundle over $M$. This is instead false for $k<e_0(L)$, being the projection $E^{-1}(k) \rightarrow M$  not surjective any more. Notice that 
\begin{equation}
\min E  \leq e_0(L) \leq c_u(L) \leq c_0(L) \leq c(L) .
\label{relazionemane}
\end{equation}

If $L$ is a magnetic Lagrangian as in \eqref{magneticlag}, then $\min E=e_0(L)=0$. When the magnetic potential $\theta$ vanishes we additionally have
$$0= \min E = e_0(L)=c_u(L)=c_0(L) = c(L) ,$$ 
but in general the inequalities in (\ref{relazionemane}) are strict. See for instance \cite[Page 151]{Man97} (or Section \ref{counterexamples}) for an example where $e_0(L)< c_0(L)<c(L)$ and \cite{PP97} for an example where $c_u(L)<c_0(L)$. 
The values $c_u(L)$ and $c_0(L)$ clearly coincide when $\pi_1(M)$ is abelian; more generally, they coincide whenever $\pi_1(M)$ is amenable (cf. \cite{FM07}).

When the fundamental group of $M$ is rich, there are other Ma\~{n}\'e critical values, which are associated to the different covering spaces of $M$. We now show which one is relevant for our purposes. 
Given a covering map $p:M_1\rightarrow M$, consider the lifted Lagrangian
$$L_1 := dp\circ L:TM_1 \longrightarrow \R$$ 
and the associated critical value $c(L_1)$ as in (\ref{c(L)}). The following lemma is straightforward.

\begin{lemma}
There holds $c(L_1)\leq c(L)$. If $p$ is a finite covering, then $c(L_1)=c(L)$.
\label{coveringandmane}
\end{lemma}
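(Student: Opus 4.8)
The plan is to prove both assertions by comparing the two "inf over loops" characterizations \eqref{c(L)} of $c(L)$ and $c(L_1)$, using the fact that loops downstairs lift to (possibly non-closed) curves upstairs and loops upstairs project to loops downstairs of the same action.

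For the inequality $c(L_1)\leq c(L)$: first I would observe that the action functional is local and compatible with the covering in the sense that $S_k^{L_1}(\tilde\gamma) = S_k^{L}(p\circ\tilde\gamma)$ for any curve $\tilde\gamma$ in $M_1$, since $L_1 = L\circ dp$ (I am reading $dp\circ L$ in the excerpt as this composition, $dp$ being the differential). Now take any $k$ with $S_k^{L}(\gamma)\geq 0$ for every closed loop $\gamma$ in $M$; I want to show $S_k^{L_1}(\tilde\gamma)\geq 0$ for every closed loop $\tilde\gamma$ in $M_1$. Given such a $\tilde\gamma$, its projection $\gamma = p\circ\tilde\gamma$ is a closed loop in $M$ with $S_k^{L_1}(\tilde\gamma) = S_k^{L}(\gamma)\geq 0$. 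Hence $k$ belongs to the set defining $c(L_1)$, so $c(L_1)\leq k$; taking the infimum over all admissible $k$ gives $c(L_1)\leq c(L)$. (One should note both critical values are finite: $c(L)<\infty$ because for $k$ large $L+k\geq 0$ pointwise by \eqref{firstinequality}, and the same $k$ works upstairs.)

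For the reverse inequality when $p$ is finite, say of degree $d$: the point is that a closed loop $\gamma$ in $M$, while it need not lift to a closed loop, can be ``wrapped'' $d$ times to produce one. Concretely, fix a lift $\tilde q$ of $\gamma(0)$ and lift $\gamma$ starting at $\tilde q$; after traversing $\gamma$ once the lift ends at some other point $\tilde q_1$ in the fiber over $\gamma(0)$. Iterating, after at most $d$ traversals (more precisely, after $m\leq d$ traversals, where $m$ is the order of the corresponding deck permutation's cycle through $\tilde q$) the concatenated lift closes up into a loop $\tilde\gamma$ in $M_1$ with $S_k^{L_1}(\tilde\gamma) = m\, S_k^{L}(\gamma)$. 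Thus if $k$ is such that $S_k^{L_1}\geq 0$ on all closed loops upstairs, then $m\, S_k^{L}(\gamma) = S_k^{L_1}(\tilde\gamma)\geq 0$, whence $S_k^{L}(\gamma)\geq 0$; since $\gamma$ was arbitrary this gives $c(L)\leq k$ and therefore $c(L)\leq c(L_1)$, which together with the first part yields equality. Alternatively, and perhaps more cleanly, one can invoke that $c(L_1)$ is the infimal $k$ such that $L_1+k$ admits no negative-action loop, and characterize $c(L)$ via the existence of a closed $1$-form / calibration on $M$: a global critical subsolution upstairs that is deck-invariant descends, and for a finite cover one can average a subsolution over the (finite) deck group to make it invariant. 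I expect the main obstacle to be nothing deep but rather bookkeeping: making precise the reparametrization in the ``wrap $m$ times'' construction (the concatenated curve is only piecewise $C^1$, but the action is defined for absolutely continuous, in particular $H^1$, curves, and concatenation stays in $H^1$ after a smooth time reparametrization), and checking the degenerate edge cases (constant loops, loops of zero action) do not affect the infimum. The finiteness hypothesis enters exactly at the step where we need $m$ — equivalently the number of sheets — to be finite so that $m\,S_k^L(\gamma)$ and $S_k^L(\gamma)$ have the same sign and $m\,S_k^L(\gamma)\to$ something finite; for an infinite cover this wrapping argument breaks down, consistent with the examples (e.g. \cite{PP97}) where $c(L_1)<c(L)$ strictly.
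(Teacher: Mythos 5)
Your proposal is correct, and since the paper omits the proof entirely (calling the lemma ``straightforward''), your argument --- projecting closed loops upstairs to loops downstairs of equal action for $c(L_1)\leq c(L)$, and closing up the lift of a loop after finitely many traversals ($m\leq d$, the cycle length of the monodromy permutation) so that $S_k^{L_1}(\tilde\gamma)=m\,S_k^{L}(\gamma)$ for the reverse inequality --- is exactly the standard argument being alluded to, with the finiteness of the cover entering precisely where you say it does. One small caveat: your ``alternative'' route via averaging a Hamiltonian subsolution over the deck group requires the covering to be regular (so that the averaged function descends to $M$), which holds for the covers $\widetilde M/G$ with $G$ normal used in the paper but not for an arbitrary finite covering, whereas your main wrapping argument needs no such hypothesis.
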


\begin{oss}
Ma\~n\'e critical values have also an equivalent Hamiltonian definition (see for instance \cite{BP02}  or \cite{CI99}). As above let $M_1$ be a cover of $M$ and denote by $H_1$ the lift of the Tonelli Hamiltonian $H$ associated with $L$ to the cover $M_1$; then there holds 
\begin{equation}
c(L_1) =  \inf_{u\in C^\infty(M_1)} \sup_{q\in M_1}  H_1(q,d_qu) .
\label{hamiltoniancharacterization}
\end{equation}
\end{oss}

It is well known that regular covering spaces correspond to normal subgroups of $\pi_1(M)$, i.e. for any regular covering $p:M_1\rightarrow M$ there is a unique normal subgroup $G < \pi_1(M)$ with
$$M_1 \cong \bigslant{\widetilde{M}}{G}\, ,$$
where $\widetilde M$ denotes the universal cover of $M$. We denote the Ma\~{n}\'e critical value $c(L_1)$ of the lifted Lagrangian by
$$ c(L;G):=  c(L_1) .$$

\begin{lemma}
Let $G,G'< \pi_1(M)$ be two normal subgroups; then 
$$c(L; \langle G,G'\rangle ) = \max \big \{ c(L;G),c(L;G')\big \} ,$$
where $\langle G,G' \rangle$ denotes the (normal) subgroup generated by $G$ and $G'$.
\end{lemma}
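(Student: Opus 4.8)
\emph{The easy inequality.} The plan is to work throughout with the loop--space characterization of the Ma\~n\'e critical value of a cover: writing $M_G:=\widetilde M/G$, a closed loop in $M_G$ is exactly (the lift of) a closed loop $\gamma$ in $M$ with $[\gamma]\in G$ (this uses that $G$ is normal, so that $p_*\pi_1(M_G)=G$ regardless of the basepoint), and $S_k$ is preserved under the covering projection since it is a local isometry. Hence $c(L;G)=\inf\{k:S_k(\gamma)\ge 0\ \forall\gamma\text{ loop in }M,\ [\gamma]\in G\}$, and likewise for $G'$ and for $\langle G,G'\rangle$. Since $G,G'\subseteq\langle G,G'\rangle$, any $k$ with $S_k\ge 0$ on all loops of class in $\langle G,G'\rangle$ in particular has $S_k\ge 0$ on loops of class in $G$ and on those of class in $G'$; therefore $c(L;\langle G,G'\rangle)\ge c(L;G)$ and $c(L;\langle G,G'\rangle)\ge c(L;G')$. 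Equivalently, one may simply invoke Lemma \ref{coveringandmane} applied to the coverings $M_G\to M_{\langle G,G'\rangle}$ and $M_{G'}\to M_{\langle G,G'\rangle}$ (monotonicity of $c$ under coverings). This gives $\max\{c(L;G),c(L;G')\}\le c(L;\langle G,G'\rangle)$.

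\emph{The hard inequality.} Here I would pass to the Hamiltonian side via \eqref{hamiltoniancharacterization}. Fix $k>\max\{c(L;G),c(L;G')\}$ and choose $k'\in(\max\{c(L;G),c(L;G')\},k)$. By \eqref{hamiltoniancharacterization} there are $u_1\in C^\infty(M_G)$ and $u_2\in C^\infty(M_{G'})$ with $\sup H(q,d_qu_1)\le k'$ and $\sup H(q,d_qu_2)\le k'$, where $H$ also denotes the lifts of the Tonelli Hamiltonian. Lift $u_1,u_2$ to $\widetilde M$, obtaining $\tilde u_1$ (invariant under the deck action of $G$) and $\tilde u_2$ (invariant under $G'$), both Lipschitz viscosity subsolutions of $H(q,dw)\le k'$ on $\widetilde M$; note the Lipschitz constant is uniform, coming from the quadratic growth/properness of $H$ dual to \eqref{firstinequality}--\eqref{secondinequality}. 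Since deck transformations of $\widetilde M\to M$ preserve $H$, each translate $\tilde u_1\circ h$, $\tilde u_2\circ h$ ($h\in\pi_1(M)$) is again a subsolution of the same equation, and $\tilde u_1\circ h$ depends only on the coset $Gh$, $\tilde u_2\circ h$ only on $G'h$. One then forms
$$
w:=\inf_{h\in\langle G,G'\rangle}\ \min\{\tilde u_1\circ h,\ \tilde u_2\circ h\},
$$
which is $\langle G,G'\rangle$--invariant by construction. Being an infimum of viscosity subsolutions of $H(q,dw)\le k'$ that are uniformly Lipschitz, \emph{provided $w\not\equiv -\infty$} the function $w$ is itself a Lipschitz viscosity subsolution of $H(q,dw)\le k'$; it therefore descends to $M_{\langle G,G'\rangle}$ and, after smoothing, \eqref{hamiltoniancharacterization} yields $c(L;\langle G,G'\rangle)\le k'<k$. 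Letting $k\downarrow\max\{c(L;G),c(L;G')\}$ finishes the proof.

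\emph{The main obstacle.} The delicate point is exactly the finiteness $w\not\equiv-\infty$: a carelessly chosen subsolution $u_1$ on $M_G$ need not be bounded below on the fibres of $M_G\to M_{\langle G,G'\rangle}$, which would force $\inf_h \tilde u_1\circ h\equiv-\infty$. The content of the lemma is that $u_1$ and $u_2$ can be chosen so that the combined infimum stays finite, and this is possible precisely because $k'$ exceeds \emph{both} $c(L;G)$ and $c(L;G')$ (if it exceeded only one, the infimum would necessarily be $-\infty$, consistently with $c(L;\langle G,G'\rangle)\ge\max$). I expect this step to require a compactness argument in the spirit of the weak KAM / critical subsolution theory used in \cite{Con06} (as in the proof that supercritical Mañé potentials are finite), replacing $u_1,u_2$ by suitable Lax--Oleinik-type regularizations adapted to the two deck actions before taking the infimum. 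As a sanity check one can note that when $[\langle G,G'\rangle:G]$ (or $[\langle G,G'\rangle:G']$) is finite the claim is immediate from the second statement of Lemma \ref{coveringandmane}, so the genuine difficulty is confined to the infinite-index case; and, as the triangle inequality for the Mañé potential only provides upper bounds, no purely loop-by-loop argument can replace this analytic input.
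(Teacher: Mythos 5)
Your first inequality is fine and coincides with the paper's argument: monotonicity of the critical value under the coverings $\widetilde M/G\to\widetilde M/\langle G,G'\rangle$ and $\widetilde M/G'\to\widetilde M/\langle G,G'\rangle$, i.e.\ Lemma \ref{coveringandmane}. The second half, however, is not a proof. The step you yourself single out --- that $w=\inf_{h\in\langle G,G'\rangle}\min\{\tilde u_1\circ h,\tilde u_2\circ h\}$ is not identically $-\infty$ --- is precisely the entire content of the inequality $c(L;\langle G,G'\rangle)\le\max\{c(L;G),c(L;G')\}$, and you leave it as an expectation (``I expect this step to require a compactness argument\dots''). Nothing in the proposal indicates how $u_1,u_2$ should be chosen or regularized so that the infimum stays finite; when $G$ has infinite index in $\langle G,G'\rangle$ a generic subsolution on $M_G$ is indeed unbounded below along the fibres of $M_G\to M_{\langle G,G'\rangle}$, so the construction as written can collapse. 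Hence there is a genuine gap: beyond the easy direction, the hard inequality is asserted, not proved.

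The paper's own proof is entirely elementary and goes exactly through the route you declare impossible. Given $k<c(L;\langle G,G'\rangle)$, it produces a loop of negative $(L+k)$-action of the form $\gamma=\alpha_1\#\beta_1\#\cdots\#\alpha_n\#\beta_n$ with $[\alpha_i]\in G$ and $[\beta_i]\in G'$, and uses that $S_k$ is \emph{exactly additive} under concatenation, $S_k(\gamma)=\sum_i\big(S_k(\alpha_i)+S_k(\beta_i)\big)$; some piece then has negative action, so $k<c(L;G)$ or $k<c(L;G')$, which is the desired inequality. Your objection that ``the triangle inequality for the Ma\~n\'e potential only provides upper bounds'' misses this point: the paper never invokes the Ma\~n\'e potential, only the additivity identity for concatenated loops (the paper treats the existence of such a concatenated negative-action representative as immediate from the definition of $c(L;\langle G,G'\rangle)$; that reduction is the only point one could ask to see spelled out, and it is in any case far lighter than the weak-KAM machinery you propose). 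So, besides being incomplete, your proposal replaces a two-line loop argument by an analytic scheme whose crucial step remains open.
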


\begin{proof}
Since $G < \langle G,G'\rangle$ is a normal subgroup, we have a covering 
$$p: \bigslant{\widetilde{M}}{G} \longrightarrow \bigslant{\widetilde{M}}{\langle G,G'\rangle}$$ 
and hence, by Lemma \ref{coveringandmane}, $c(L;G)\leq c(L;\langle G,G'\rangle)$. The same holds clearly also when considering $G'$ instead of $G$ and hence we get 
$$\max \big \{ c(L;G),c(L;G')\big \} \leq c(L;\langle G,G'\rangle ) .$$
Conversely,  let $k<c(L;\langle G,G'\rangle )$. By definition there exists 
$$\gamma =\alpha_1 \# \beta_1 \# ... \# \alpha_n \# \beta_n$$ 
with $\alpha_i\in G$, $\beta_i\in G'$ for all $i=1,...,n$, such that $S_k(\gamma)<0$. It follows
$$S_k(\gamma)=S_k(\alpha_1)+S_k(\beta_1) + ... +S_k(\alpha_n) + S_k(\beta_n) < 0 .$$ 

In particular there is one loop, say $\alpha_1$, such that $S_k(\alpha_1)<0$; hence, by definition we have  $k<c(L;G)$. This implies the opposite inequality.
\end{proof}

\vspace{4mm}

We want now to understand when the action functional $\A_k$ is bounded from below on each connected component of $\mathcal M_Q$. Thus, let $q_0\in Q_0$, $q_1\in Q_1$ and denote by
\begin{equation}
G_0 :=\ \big \langle \imath_*(\pi_1(Q_0,q_0))\big \rangle\, ,\ \ \ \ G_1 :=\ \big \langle \imath_*(\pi_1(Q_1,q_1))\big \rangle 
\end{equation}
the smallest normal subgroups in $\pi_1(M)$ which contain $\imath_*(\pi_1(Q_0))$, $\imath_*(\pi_1(Q_1))$ respectively, where $\imath:Q_0\hookrightarrow M$, $\imath:Q_1\rightarrow M$ are the inclusion maps.

Suppose that there exists a loop $\delta$ freely-homotopic to an element in $\imath_*(\pi_1(Q_0,q_0))$ and with $S_k(\delta)<0$. Under this assumption we want to show 
that $\A_k$ is unbounded from below on every connected component of $\mathcal M_Q$.
Without loss of generality we may assume that  $\delta \in \imath_*(\pi_1(Q_0,q_0))$, as otherwise we can choose any path $\nu$ from $q_0$ to $\delta(0)$,
$\eta\in \pi_1(M,Q_0)$ such that $\eta^{-1}\#\nu^{-1}\#\delta \#\nu\#\eta \in \imath_*(\pi_1(Q_0,q_0))$, and $n\in \N$ large enough such that 
$$S_k(\eta^{-1}\# \nu^{-1}\# \delta^n \# \nu\#\eta)  < 0.$$
Now fix $\sigma \in \mathcal M_Q$. Since $Q_0$ is connected there exists a path $\mu:[0,1]\rightarrow Q_0$ such that $(\mu^{-1}\#\sigma\# \mu) (0)=q_0$. Furthermore, we can choose $\mu$ in such a way that 
$$\A_k(\mu^{-1}\#\sigma\# \mu ) = S_k(\mu^{-1})+\A_k(\sigma)+S_k(\mu) \leq \A_k(\sigma) + c,$$
where $c$ is some constant independent of $\sigma$. Therefore, up to adding a uniformly bounded quantity to $\A_k(\sigma)$, we can assume without loss of generality that $\sigma(0)=q_0$. We claim that, for all $n\in\N$, the path $\sigma \#  \delta^n$ lies in the same connected component of $\sigma$. Indeed, there exists $\alpha \in \pi_1(Q_0,q_0)$ such that $\imath \circ \alpha$ is homotopic  to $\delta$ with 
base point $q_0$ fixed; in particular $\sigma \# \delta \sim \sigma \# (\imath \circ \alpha)$ and now it is easy to see that $\sigma \# (\imath \circ \alpha)\sim \sigma$. A homotopy
is for instance given by 
$$F:[0,1]\times [0,1]\rightarrow M,\quad F(s,\cdot ):= \sigma \# (\imath \circ \alpha)|_{[s,1]}.$$

Since $\A_k\big (\sigma \#  \delta^n \big )\rightarrow -\infty$ as $n\rightarrow +\infty$,  
we may conclude that, if such a loop $\delta$ exists then the free-time action functional $\A_k$ is unbounded from below on each connected component of $\mathcal M_Q$. 
In other words, $\A_k$ is unbounded from below on each connected component of $\mathcal M_Q$ if $k<c(L;G_0)$ (observe indeed that, by definition, 
for every $k<c(L;G_0)$ there exists a loop $\delta$ satisfying the requirements). Clearly the same 
holds when considering $Q_1$ instead of $Q_0$. Therefore we define the \textit{Ma\~n\'e critical value of the pair} $Q_0,Q_1$ as 
\begin{equation}
c(L;Q_0,Q_1) := c(L;\langle G_0,G_1\rangle) = \max \big \{c(L;G_0),c(L;G_1)\big \} .
\label{clh0h1}
\end{equation}

We can sum up the discussion above in the following

\begin{lemma}
For every $k< c(L;Q_0,Q_1)$, the free-time action functional $\A_k$ is unbounded from below on each connected component of $\mathcal M_Q$.
\label{unboundedness}
\end{lemma}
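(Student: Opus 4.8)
The plan is to reduce the statement to the computation already carried out in the discussion preceding Lemma \ref{unboundedness}, making sure the argument is uniform across all connected components of $\mathcal M_Q$. Fix $k<c(L;Q_0,Q_1)$. By \eqref{clh0h1} we have $c(L;Q_0,Q_1)=\max\{c(L;G_0),c(L;G_1)\}$, so $k<c(L;G_i)$ for at least one $i\in\{0,1\}$; without loss of generality $k<c(L;G_0)$. The first step is to produce, from the definition of $c(L;G_0)=c(L_1)$ (with $L_1$ the lift to $\widetilde M/G_0$), a loop $\delta$ in $M$ with $S_k(\delta)<0$ whose free-homotopy class lies in $G_0$. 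By \eqref{c(L)} applied to $L_1$, since $k<c(L_1)$ there is a closed loop $\tilde\delta$ in $\widetilde M/G_0$ with $S_k(\tilde\delta)<0$; its projection $\delta$ to $M$ satisfies $S_k(\delta)<0$ and, by construction of the cover, $[\delta]\in G_0$ after free homotopy. Using the splicing trick already spelled out in the text (conjugating by a path $\nu$ from $q_0$ to $\delta(0)$, an element $\eta\in\pi_1(M,Q_0)$, and raising to a large power $n$), we may arrange $\delta\in\imath_*(\pi_1(Q_0,q_0))$ with $S_k(\delta)<0$ still.

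The second step is the uniformization over connected components. Let $\mathcal N$ be an arbitrary connected component of $\mathcal M_Q$ and pick any $\sigma\in\mathcal N$. As in the text, connectedness of $Q_0$ lets us choose a path $\mu$ in $Q_0$ with $(\mu^{-1}\#\sigma\#\mu)(0)=q_0$, and since $\mu$ ranges over a compact manifold we may bound $S_k(\mu^{-1})+S_k(\mu)\le c$ by a constant independent of $\sigma$; thus up to adding a bounded quantity we assume $\sigma(0)=q_0$. The key homotopy-theoretic point, already established in the excerpt via the homotopy $F(s,\cdot):=\sigma\#(\imath\circ\alpha)|_{[s,1]}$, is that for every $n\in\N$ the path $\sigma\#\delta^n$ lies in the same connected component $\mathcal N$ as $\sigma$, because $\delta$ is homotopic rel endpoints to $\imath\circ\alpha$ for some $\alpha\in\pi_1(Q_0,q_0)$ and $\sigma\#(\imath\circ\alpha)\sim\sigma$ in $H^1_Q([0,1],M)$.

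The third step is the elementary additivity and the limit: $\A_k(\sigma\#\delta^n)=\A_k(\sigma)+nS_k(\delta)$, and since $S_k(\delta)<0$ this tends to $-\infty$ as $n\to\infty$. As $\sigma\#\delta^n\in\mathcal N$ for all $n$, the functional $\A_k$ is unbounded from below on $\mathcal N$. Since $\mathcal N$ was arbitrary, the claim follows; the case $k<c(L;G_1)$ is identical with $Q_1$ in place of $Q_0$.

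I expect the only genuinely delicate point to be the homotopy claim that $\sigma\#\delta^n$ stays in $\mathcal N$ — i.e.\ that concatenating with $\delta$ does not change the connected component of $\mathcal M_Q$. This hinges on $\delta$ being freely homotopic into $\imath_*(\pi_1(Q_0,q_0))$ rather than merely into some conjugate in $\pi_1(M)$, which is exactly why the conjugation-and-power reduction in the first step is needed; once $\delta$ genuinely bounds a disk-homotopy to a loop of the form $\imath\circ\alpha$ with $\alpha$ a loop in $Q_0$, the explicit homotopy $F$ does the job and everything else is a one-line estimate.
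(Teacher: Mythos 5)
Your route is the paper's own: reduce via \eqref{clh0h1} to the case $k<c(L;G_0)$, produce a loop $\delta$ of negative $(L+k)$-action attached to $Q_0$, normalize $\sigma$ so that $\sigma(0)=q_0$ at uniformly bounded cost, check that $\sigma\#\delta^n$ stays in the connected component of $\sigma$ via the homotopy $F(s,\cdot)=\sigma\#(\imath\circ\alpha)|_{[s,1]}$, and let $n\to\infty$. The one place where you go beyond the paper's text is the production of $\delta$, and that is exactly where your argument breaks. Projecting a negative-action closed loop of $\widetilde M/G_0$ down to $M$ only gives a loop $\delta$ whose class lies in the normal closure $G_0=\big\langle \imath_*(\pi_1(Q_0,q_0))\big\rangle$, i.e.\ a product $\prod_j g_j a_j g_j^{-1}$ of conjugates of elements $a_j\in\imath_*(\pi_1(Q_0,q_0))$. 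Such an element need not be conjugate in $\pi_1(M)$ to any element of $\imath_*(\pi_1(Q_0,q_0))$, and the splicing trick cannot repair this: replacing $\delta$ by $\eta^{-1}\#\nu^{-1}\#\delta^n\#\nu\#\eta$ only produces classes of the form $g[\delta]^n g^{-1}$, which lie in $\imath_*(\pi_1(Q_0,q_0))$ only if some power of $[\delta]$ is already conjugate into that subgroup --- precisely what membership in $G_0$ does not guarantee when $\pi_1(M)$ is non-abelian. In the paper, the conjugation-and-power step serves a weaker purpose: it adjusts the base point of a loop that is assumed from the outset to be freely homotopic to a loop of the form $\imath\circ\alpha$ with $\alpha$ a loop in $Q_0$.

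So the missing ingredient is exactly the statement that $k<c(L;G_0)$ yields a loop of negative action that is freely homotopic to an element of $\imath_*(\pi_1(Q_0,q_0))$, and not merely one whose class lies in $G_0$; this stronger property is what the homotopy $F$ --- hence the claim that $\sigma\#\delta^n$ remains in $\mathcal N$ --- actually consumes, as you yourself point out. The paper does not obtain it the way you attempt to: it takes the existence of such a $\delta$ as the standing hypothesis of the discussion preceding the lemma and then asserts that this hypothesis is fulfilled whenever $k<c(L;G_0)$, without passing through ``class in $G_0$'' and back. As written, your first step proves less than your second and third steps use, so the proof does not close. A repair would require either an argument that the critical value computed using only loops freely homotopic into $Q_0$ is at least $c(L;G_0)$, or a different mechanism for driving $\A_k$ to $-\infty$ inside a fixed connected component of $\mathcal M_Q$ using loops whose class is only known to lie in the normal closure.
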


\begin{prop}
We have $k(L;Q_0,Q_1)\leq c(L;Q_0,Q_1)$.
\label{klminorecl}
\end{prop}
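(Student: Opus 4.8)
The statement to prove is $k(L;Q_0,Q_1)\leq c(L;Q_0,Q_1)$, i.e.\ that whenever $k\geq c(L;Q_0,Q_1)$ the necessary condition $H^{-1}(k)\cap N^*Q_i\neq\emptyset$ for $i=0,1$ holds. It clearly suffices to prove, for each $i$, that $k\geq c(L;G_i)$ implies $H^{-1}(k)\cap N^*Q_i\neq\emptyset$; then one takes the maximum over $i=0,1$ and uses \eqref{clh0h1}. So fix $i$, set $Q:=Q_i$, $G:=G_i$, and let $M_1=\widetilde M/G$ be the corresponding regular cover with lifted Hamiltonian $H_1$. By definition \eqref{cu(L)}-style, $c(L;G)=c(L_1)=\inf_{u\in C^\infty(M_1)}\sup_{q\in M_1}H_1(q,d_qu)$ by \eqref{hamiltoniancharacterization}. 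The key geometric observation is that, since $G$ contains $\imath_*(\pi_1(Q))$, the submanifold $Q$ lifts to $M_1$: more precisely, a connected component $\widetilde Q$ of the preimage $p^{-1}(Q)$ maps diffeomorphically (or at least as a covering that is trivial on $\pi_1$) onto $Q$ under $p:M_1\to M$, because all loops in $Q$ become contractible — well, rather, null in $G$ — hence $\widetilde Q$ is a genuine copy of $Q$ sitting inside $M_1$ with $\imath_*(\pi_1(\widetilde Q))$ killed. The relevant consequence is that the conormal bundle $N^*\widetilde Q\subseteq T^*M_1$ projects onto $N^*Q\subseteq T^*M$ and $H_1|_{N^*\widetilde Q}=H|_{N^*Q}\circ p$.

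The plan is then as follows. First I would reduce, as above, to showing: if $k\geq c(L_1)$ then $H_1^{-1}(k)\cap N^*\widetilde Q\neq\emptyset$. Second, I would argue by contradiction: suppose $H_1^{-1}(k)\cap N^*\widetilde Q=\emptyset$. Since $H_1$ is Tonelli (superlinear, fiberwise convex) and achieves its fiberwise minimum $E_1(q,0)$ on the zero section, and $N^*\widetilde Q$ contains the zero section over $\widetilde Q$, the assumption forces $H_1(q,p)>k$ for \emph{every} $(q,p)\in N^*\widetilde Q$ or $H_1(q,p)<k$ for every such $(q,p)$; but since $N^*\widetilde Q$ is noncompact (the conormal fibers are vector spaces) and $H_1$ is superlinear on fibers, $H_1$ is unbounded above on $N^*\widetilde Q$, so the only possibility consistent with $N^*\widetilde Q\cap H_1^{-1}(k)=\emptyset$ is $H_1(q,p)>k$ for all $(q,p)\in N^*\widetilde Q$; in particular $\min_{q\in\widetilde Q}E_1(q,0)>k$. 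Third — and this is the crux — I would show that $\min_{q\in\widetilde Q}E_1(q,0)>k$ contradicts $k\geq c(L_1)$. To do this, recall the Lagrangian characterization: $c(L_1)\geq \inf_{\gamma}\{$action-per-unit-"penalty"$\}$ bounds, and more concretely that for $k<\min_{q}E_1(q,0)$ over \emph{some point}, one can produce a very short constant-ish loop based at a minimizing point $q_*\in\widetilde Q$ on which $S_k<0$: namely $S_k$ of the constant loop at $q_*$ equals $k-L_1(q_*,0)=k-(\,-E_1(q_*,0)+d_vL_1(q_*,0)\cdot 0\,)$, hmm, more cleanly, using $L_1(q_*,0)=-E_1(q_*,0)$, the constant loop at $q_*$ of time length $T$ has action $T(k+L_1(q_*,0))=T(k-E_1(q_*,0))<0$. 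But a constant loop is contractible, hence certainly represents an element of $G$ (the trivial one), so this shows $k<c(L_1)$ if $k<E_1(q_*,0)$ for the minimizing $q_*$, i.e.\ if $k<\min_{\widetilde Q}E_1(\cdot,0)$ — wait, I only need $k$ strictly below $E_1$ at one point, and the minimizing point gives the strongest such statement. This yields $c(L_1)\geq \min_{q\in\widetilde Q}E_1(q,0)$, contradicting $\min_{\widetilde Q}E_1(\cdot,0)>k\geq c(L_1)$.

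Finally I would assemble: the contradiction shows $H_1^{-1}(k)\cap N^*\widetilde Q\neq\emptyset$ for all $k\geq c(L_1)=c(L;G_i)$, hence projecting down $H^{-1}(k)\cap N^*Q_i\neq\emptyset$ for $k\geq c(L;G_i)$; taking $k\geq\max\{c(L;G_0),c(L;G_1)\}=c(L;Q_0,Q_1)$ gives both intersections nonempty, so $k(L;Q_0,Q_1)\leq c(L;Q_0,Q_1)$ by the definition of $k(L;Q_0,Q_1)$ as the infimum of $k$ for which \eqref{necessaryham} holds. The main obstacle I anticipate is the bookkeeping around the lift: one must justify cleanly that $Q_i$ lifts to the cover $M_1=\widetilde M/G_i$ so that $N^*Q_i$ lifts to $N^*\widetilde Q_i$ with $H_1$ restricting correctly — this uses that $\langle\imath_*\pi_1(Q_i)\rangle\subseteq G_i$, so lifts of loops in $Q_i$ close up — together with the elementary but slightly fiddly fact that $c(L_1)\geq E_1(q,0)$ at every point (equivalently $c(L)\geq e_0(L)$, already recorded in \eqref{relazionemane}, and this inequality lifts). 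Everything else is soft: superlinearity of $H_1$ on conormal fibers and the constant-loop computation.
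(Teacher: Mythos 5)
Your reduction (work with one $Q_i$ at a time, pass to the cover $\widetilde M/G_i$ where $Q_i$ lifts, and show that $H>k$ on the whole conormal bundle forces $k<c(L;G_i)$) is sound up to the point where you extract a contradiction, but the crucial step is wrong. From $H_1>k$ on $N^*\widetilde Q$ you conclude ``in particular $\min_{q\in\widetilde Q}E_1(q,0)>k$'', on the grounds that $H_1$ attains its fiberwise minimum $E_1(q,0)$ on the zero section. This is false in general: the fiberwise minimum of $H(q,\cdot)$ over $T^*_qM$ is indeed $E(q,0)$, but it is attained at $p=d_vL(q,0)$, not at $p=0$ (one has $H(q,0)=-\min_vL(q,v)\geq -L(q,0)=E(q,0)$, usually strictly), and the covector $d_vL(q,0)$ need not lie in $N^*_qQ$. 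So $H>k$ on $N^*Q$ only says $\min_{N^*_qQ}H>k$, which gives no lower bound on $E(q,0)$. The failure occurs exactly in the situation the proposition is about: for a magnetic Lagrangian $L(q,v)=\tfrac12|v|^2+\theta_q(v)$ one has $E(q,0)\equiv 0$, while $\min_{N^*_qQ_0}H=\tfrac12|\mathcal P_0w_q|^2$ can be strictly positive; for $0<k<\min_{Q_0}\tfrac12|\mathcal P_0w_q|^2$ your hypothesis $H>k$ on $N^*Q_0$ holds, yet every constant loop has action $Tk>0$, so the constant-loop contradiction you rely on cannot be produced. (The inequality $c(L_1)\geq\min_{\widetilde Q}E_1(\cdot,0)$ that your constant loops would establish is true but useless here, since it does not conflict with $k\geq c(L_1)$.)

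What is missing is the mechanism that converts $\min_{N^*Q_0}H>k$ into a loop of negative $(L+k)$-action whose class lies in $G_0$, and for this one must move along $Q_0$, not sit at a point. This is the paper's argument: take the smooth section $q\mapsto\theta_q\in N^*_qQ_0$ given by the fiberwise minimizer of $H$ on the conormal fibers, and set $u_q:=\frac{\partial H}{\partial p}(q,\theta_q)$. Fiberwise minimality forces $u_q\in T_qQ_0$ (the derivative of $H$ in the conormal directions vanishes), and Fenchel duality gives, along any integral curve of $u$ inside $Q_0$, an action rate $L(q,u_q)+k=\theta_q(u_q)-H(q,\theta_q)+k=k-H(q,\theta_q)<0$ (in the magnetic case this is the flow of $-\mathcal P_0w$ with rate $k-\tfrac12|\mathcal P_0w_q|^2$). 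Integrating long enough and closing up inside the compact connected $Q_0$ with uniformly bounded cost yields a loop contained in $Q_0$, hence with class in $\imath_*(\pi_1(Q_0))\subseteq G_0$, of negative action, so $k<c(L;G_0)\leq c(L;Q_0,Q_1)$; letting $k\uparrow k(L;Q_0,Q_1)$ gives the proposition. Your covering-space bookkeeping is not needed for this (the loop already lies in $Q_0$), but it is the tangency of $\partial_pH$ at the conormal minimizer plus the Fenchel computation, absent from your proposal, that makes the proof work.
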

\begin{proof}
We prove this rigorously in case $L$ is a magnetic Lagrangian and then show how to adjust the proof in the general case. Recall that, if $L(q,v)=\frac 12 |v|_q^2 +\theta_q(v)$, then 
$$k(L;Q_0,Q_1)= \max \left \{\min_{q\in Q_0} \frac 12 |\mathcal P_0 w_q|^2, \min_{q\in Q_1} \frac 12 |\mathcal P_1 w_q|^2\right \},$$
where $\mathcal P_i:TM|_{Q_i}\rightarrow TQ_i$ is the orthogonal projection and $w_q\in T_qM$ is the unique vector representing $\theta_q \in T_q^*M$.
Suppose without loss of generality that 
$$k(L;Q_0,Q_1)=\min_{q\in Q_0} \frac 12 |\mathcal P_0 w_q|^2>0$$ 
and fix $k<k(L;Q_0,Q_1)$ (if $k(L;Q_0,Q_1)=0$ 
then there is nothing to prove, for we trivially have $c(L;Q_0,Q_1)\geq 0$). 
We now prove that there exists $\delta \in \imath_*(\pi_1(Q_0))$ with negative $(L+k)$-action; this yields that $k<c(L;G_0)$ and 
hence in particular $k<c(L;Q_0,Q_1)$, thus showing our claim. Consider a curve $u:[0,T]\rightarrow Q_0$ satisfying $\dot u(t)=-\mathcal P_0 w_{u(t)}$ 
for every $t\in [0,T]$ (observe that by assumption $\mathcal P_0w_q\neq 0$ for every $q\in Q_0$); a straightforward computation shows that 
$$S_k(u)=\int_0^T \left (k - \frac12 |\mathcal P_0 w_{u(t)}|^2 \right )\, dt \leq T \left (k - \min_{q\in Q_0} \frac 12 |\mathcal P_0 w_q|^2\right )\rightarrow -\infty$$
as $T\rightarrow +\infty$. Since $Q_0$ is compact and connected, for every $T>0$ we can find a path $\gamma_T:[0,1]\rightarrow Q_0$ connecting $u(T)$ with $u(0)$ 
and with $S_k(\gamma_T)$ uniformly bounded. It follows that, for $T$ large enough, $\delta:=\gamma_T\#u$ is a loop in $Q_0$ with negative $(L+k)$-action.

In the general case consider the restriction $H:N^*Q_0\rightarrow \R$; since $H$ is Tonelli, there exists a smooth section $q_0\mapsto \theta_{q_0}$
of the bundle $N^*Q_0\rightarrow Q_0$. Consider now 
$$u_{q_0}:= \frac{\partial H}{\partial p}(q_0,\theta_{q_0})\in T_{q_0}M.$$
Actually we have $u_{q_0}\in T_{q_0}Q_0$. Indeed if $p_0\in N_{q_0}^*Q_0$ then
$$p_0(u_{q_0})=\frac{\partial H}{\partial p}(q_0,\theta_{q_0})[p_0]=0.$$
By definition we have 
$$\min_{N^*Q_0} H = \min_{q_0\in Q_0} E(q_0,u_{q_0}).$$
The assertion follows now, as above, integrating the vector field $u$ on $Q_0$ long enough. 
\end{proof} 

The discussion above actually also proves that the connected components of $\mathcal M_Q$ correspond, with a slight abuse of notation, to the elements of $\pi_1(M,q_0)/\langle G_0,G_1\rangle$.

\begin{prop}
The connected components of $\mathcal M_Q$ correspond to the classes in $\pi_1(M,q_0)/\langle G_0,G_1\rangle$.
\end{prop}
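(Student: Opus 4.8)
The plan is to read off $\pi_0(\mathcal M_Q)$ from the topology of the endpoint evaluation map, reusing exactly the elementary homotopies from the discussion preceding Lemma \ref{unboundedness}. Since the factor $(0,+\infty)$ is contractible, $\pi_0(\mathcal M_Q)=\pi_0\big(H^1_Q([0,1],M)\big)$, and since the inclusion of $H^1$-paths into continuous paths is a homotopy equivalence compatible with the endpoint constraints, it suffices to count components of the space $P$ of continuous paths from $Q_0$ to $Q_1$. Fix $q_0\in Q_0$, $q_1\in Q_1$ and a reference path $\sigma_0$ from $q_0$ to $q_1$. To $x\in P$ I associate, after choosing (using connectedness of $Q_0,Q_1$) paths $\mu_0\subseteq Q_0$ from $q_0$ to $x(0)$ and $\mu_1\subseteq Q_1$ from $x(1)$ to $q_1$, the class $\Phi([x]):=[\mu_0\#x\#\mu_1\#\sigma_0^{-1}]\in\pi_1(M,q_0)/\langle G_0,G_1\rangle$.

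First I would check well-definedness. Replacing $\mu_0$ multiplies the based loop on the left by an element of $\imath_*(\pi_1(Q_0,q_0))\subseteq G_0$; replacing $\mu_1$ inserts an element of $\imath_*(\pi_1(Q_1,q_1))$ in the \emph{interior} of the concatenation, hence changes the based class by a \emph{conjugate} of such an element, i.e.\ by an element of the normal closure $G_1$ — this is precisely why one must pass to the normal subgroups $G_0,G_1$ rather than to $\imath_*\pi_1(Q_i)$ themselves. Independence of the chosen representative of $[x]$ follows by running the construction along a homotopy $H(\cdot,\cdot)$ in $P$, with $\mu_i$ replaced by $\mu_i^0$ composed with the endpoint track $s\mapsto H(s,i)$: one obtains a continuous family of based loops, whose homotopy class is therefore constant and matches $\Phi([x])$, $\Phi([x'])$ at the two ends. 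Surjectivity is immediate, since $x:=\ell\#\sigma_0\in P$ has $\Phi([x])=[\ell]$.

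The heart of the matter is injectivity. Here the plan is: (i) by dragging $x(0)$, $x(1)$ to $q_0$, $q_1$ along paths in $Q_0$, $Q_1$ — deformations that stay inside $P$ — reduce to $x,x'$ based at $q_0,q_1$, so that the hypothesis reads $[x\#\sigma_0^{-1}]\equiv[x'\#\sigma_0^{-1}]$ in $\pi_1(M,q_0)/\langle G_0,G_1\rangle$; (ii) invoke the homotopy $F(s,\cdot):=(\imath\circ\alpha)|_{[s,1]}\#x$ of the discussion above, which gives $(\imath\circ\alpha)\#x\sim x$ in $P$ for $\alpha\in\pi_1(Q_0,q_0)$, together with its mirror $x\#(\imath\circ\beta)\sim x$ for $\beta\in\pi_1(Q_1,q_1)$, to move the class of $x$ by left multiplication by $\imath_*(\pi_1(Q_0,q_0))$ and right multiplication by $\sigma_0\imath_*(\pi_1(Q_1,q_1))\sigma_0^{-1}$ without leaving its component. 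The clean way to organise this is via the fibration $P\to Q_0\times Q_1$ given by the endpoints, whose fibre is the space of paths $q_0\to q_1$ (with $\pi_0\cong\pi_1(M,q_0)$): its homotopy exact sequence identifies $\pi_0(\mathcal M_Q)$ with the orbit set of the monodromy action of $\pi_1(Q_0,q_0)\times\pi_1(Q_1,q_1)$, and a direct computation of this action (keeping track of the basepoint transport by $\sigma_0$) shows the orbits to be the cosets of $\langle G_0,G_1\rangle$, yielding the stated correspondence. The main obstacle I expect is exactly this last step: verifying that $P\to Q_0\times Q_1$ is genuinely a (Hurewicz) fibration so that the exact sequence applies, and that the elementary moves above account for \emph{precisely} the identifications imposed by $\langle G_0,G_1\rangle$ and no others (this being clean when $\imath_*\pi_1(Q_i)$ are normal, e.g.\ $\pi_1(M)$ abelian, and the content of the ``slight abuse of notation'' in general).
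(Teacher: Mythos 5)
Your reduction steps are fine (the $(0,+\infty)$ factor is contractible, $H^1$ versus continuous paths is standard, and the endpoint map is a genuine Hurewicz fibration, being the pullback of the free path fibration $M^{[0,1]}\to M\times M$ along $Q_0\times Q_1\hookrightarrow M\times M$), as are well-definedness and surjectivity of $\Phi$. The gap is the step you yourself flag at the end, and it is not a routine verification: it is the whole content of the statement. What the monodromy action of $\pi_1(Q_0,q_0)\times\pi_1(Q_1,q_1)$ (equivalently, your elementary moves: attaching loops of $Q_0$ at the initial point and loops of $Q_1$ at the final point) actually computes is the double-coset space
$$\imath_*\pi_1(Q_0,q_0)\ \backslash\ \pi_1(M,q_0)\ /\ \sigma_0\,\imath_*\pi_1(Q_1,q_1)\,\sigma_0^{-1},$$
i.e.\ identifications by the \emph{images} themselves, transported by $\sigma_0$ — nothing in your list of moves produces a conjugate $\eta^{-1}\#\alpha\#\eta$ with $\eta$ an arbitrary loop of $M$, which is what generates the normal closure $G_0$. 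The canonical map from this double-coset space onto $\pi_1(M,q_0)/\langle G_0,G_1\rangle$ is surjective but in general not injective: whether $u$ and $(\eta^{-1}\#\alpha\#\eta)\#u$ lie in the same component amounts to the membership $\eta^{-1}\alpha\eta\in \imath_*\pi_1(Q_0,q_0)\cdot\big([u]\,\imath_*\pi_1(Q_1,q_1)\,[u]^{-1}\big)$, which can fail when the images are not normal (already for $Q_1$ a point and $\imath_*\pi_1(Q_0)$ a non-normal cyclic subgroup of a higher-genus surface group, where $\beta\alpha\beta^{-1}\notin\langle\alpha\rangle$). So your argument as written establishes surjectivity of $\Phi$ but not injectivity; the identification of the monodromy orbits with cosets of $\langle G_0,G_1\rangle$ is clean exactly in the situations you mention (images normal, e.g.\ $\pi_1(M)$ abelian, which covers the paper's torus examples), and in general it requires either extra hypotheses or additional homotopies in $\mathcal M_Q$ that the proposal never produces.

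For comparison, the paper's proof is organized differently but places its weight on the same point: it fixes the reference path $\gamma$, identifies $G_1$ with $\psi(G_1)<\pi_1(M,q_0)$, \emph{asserts} that components of $\mathcal M_Q$ biject with $\pi_0(\mathcal M_q)/\sim_{Q_0,Q_1}$ where $[u]\sim[g_1\#v\#g_0]$ with $g_0,g_1$ already ranging over the normal closures $G_0,G_1$, and then the map $[u]\mapsto[u\#\gamma]$ is an easy bijection precisely because $G_0,G_1$ are normal (so $\langle G_0,G_1\rangle=G_0G_1$). In other words, the step you defer is not hiding in the paper as a lemma you could quote; it is exactly where the "slight abuse of notation" lives. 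To complete your proof you would have to either prove that identification (supplying homotopies in $\mathcal M_Q$ realizing the conjugated generators of $G_0,G_1$), or weaken the conclusion to the double-coset description together with the canonical surjection onto $\pi_1(M,q_0)/\langle G_0,G_1\rangle$ — which is in fact all that the later arguments of the paper (e.g.\ the boundedness Lemma via the cover $\widetilde M/\langle G_0,G_1\rangle$) use.
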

\begin{proof}
Observe that $G_1$ is not a subgroup of $\pi_1(M,q_0)$, but it can be naturally identified with a subgroup of it. Consider 
$\gamma:[0,1]\rightarrow M$ such that $\gamma(0)=q_0$ and $\gamma(1)=q_1$, the induced isomorphism 
$$\psi:\pi_1(M,q_1)\rightarrow \pi_1(M,q_0),\quad \psi ([\alpha]):=[\gamma^{-1}\#\alpha \#\gamma],$$
and the subgroup $\psi(G_1)<\pi_1(M,q_0)$. Obviously, $\psi(G_1)$ is independent of the choice of the path $\gamma$.

Notice furthermore that the connected components of $\mathcal M_Q$ are in bijection with $\pi_0(\mathcal M_q)/\sim_{Q_0,Q_1}$, where $\mathcal M_q$ is the 
space of paths connecting $q_0$ with $q_1$ and $[u] \sim_{Q_0,Q_1}[v]$ if and only if there exist $g_0\in G_0$ and $g_1\in G_1$ such that $[u]=[g_1 \# v \# g_0]$.
The desired bijection is now given by 
$$\hspace{37mm} \pi_0(\mathcal M_q)/\sim_{Q_0,Q_1} \rightarrow \pi_1(M,q_0)/\langle G_0,G_1\rangle, \quad [u]\longmapsto [u\# \gamma].
\hspace{23mm} \qedhere$$
\end{proof}

\vspace{1mm}

\begin{oss}
If $Q_0\cap Q_1$ is not connected, there might be more than one connected component of $\mathcal M_Q$ containing constant paths as the following example shows.
Consider $\T^2$ as the square $[0,1]^2$ with identified sides and let $Q_0$ be a circle with center in $(\frac 12,\frac 12)$ and radius $r_0<\frac 12$. Let now
$Q_1$ be another circle with center in $(0,\frac 12)$ and radius $r_1<\frac 12$ such that $r_0+r_1>\frac 12$ and denote with $q_0,q_1,q_2,q_3$ the four intersection points (see Figure \ref{conncomp}). Clearly, the constant paths in $q_0$ and $q_1$ are contained in the same connected component of $\mathcal M_Q$; the same holds for the 
constant paths in $q_2$ and $q_3$. On the other hand, it is easy to see that the path 
$$a:[0,1]\rightarrow \T^2, \quad a(t)= ((q_0)_x + t, (q_0)_y)$$ is homotopic to the
constant path in $q_2$; in particular, the constant 
paths in $q_0$ and in $q_2$ are in different connected components of $\mathcal M_Q$, since $[a]\neq 0$ in $\pi_1(M,q_0)/\langle G_0,G_1\rangle=\pi_1(M,q_0)$.
\end{oss}

\begin{figure}[h]
\begin{center}
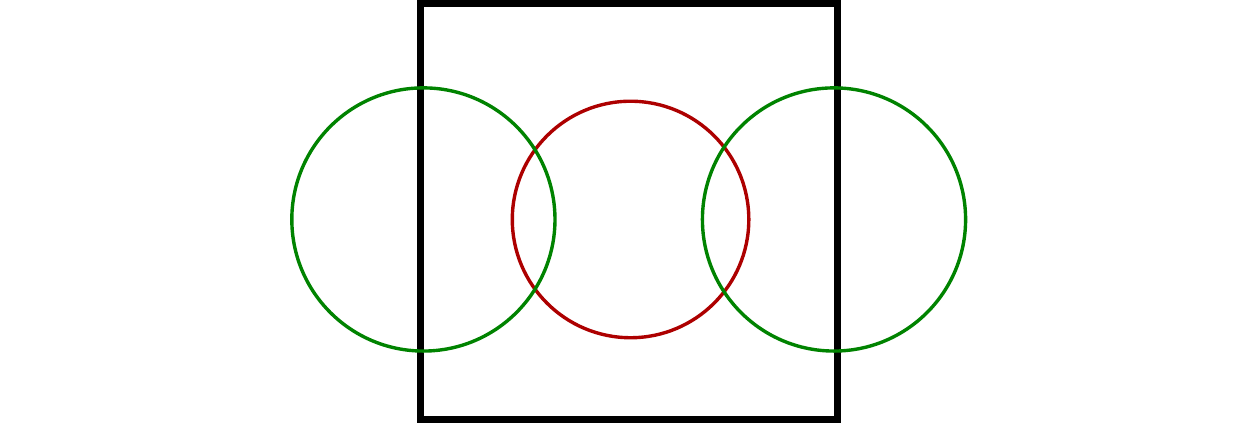
\end{center}
\caption{Constant paths may be contained in different connected components of $\mathcal M_Q$.}
\label{conncomp}
\end{figure}

We show now that, for $k\geq c(L;Q_0,Q_1)$, $\A_k$ is bounded from below on each connected component of $\mathcal M_Q$.
The proof is analogous to the one of \cite[Lemma 4.1]{Abb13}, where the case of periodic orbits is treated and $c(L;Q_0,Q_1)$ is replaced by $c_u(L)$.
Before proving this we need the following 

\begin{lemma}
Let $G<\pi_1(M)$ be a normal subgroup containing $G_0$ and let $p_G:M_G\rightarrow M$ be the corresponding 
covering map, where $M_G:=\widetilde M/G$ is endowed with the metric obtained by lifting the given metric on $M$. Then there exists $Q_G\subseteq M_G$ such that $Q_G\cong Q_0$ and 
$$p_G^{-1}(Q_0)=\bigcup_{\Gamma \in \pi_1(M)/G} \Gamma \cdot Q_G.$$
\label{lemmacovering}
\end{lemma}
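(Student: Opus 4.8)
Let $G<\pi_1(M)$ be a normal subgroup containing $G_0$, $p_G:M_G\to M$ the associated covering, $M_G=\widetilde M/G$ with the lifted metric. Then there is $Q_G\subseteq M_G$ with $Q_G\cong Q_0$ and $p_G^{-1}(Q_0)=\bigcup_{\Gamma\in\pi_1(M)/G}\Gamma\cdot Q_G$.

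\emph{Plan.} The idea is that $Q_G$ should be a \emph{lift} of the inclusion $\imath:Q_0\hookrightarrow M$ to the covering $p_G:M_G\to M$, and that the decomposition of $p_G^{-1}(Q_0)$ into deck-translates of $Q_G$ is a formal consequence of $p_G$ being a \emph{normal} (Galois) covering, which holds because $G$ is a normal subgroup of $\pi_1(M)$. So the proof splits into three independent pieces: (i) produce the lift $\widetilde\imath:Q_0\to M_G$; (ii) check it is a smooth embedding, so that $Q_G:=\widetilde\imath(Q_0)\cong Q_0$; (iii) use transitivity of the deck group on the fibers to get $p_G^{-1}(Q_0)=\bigcup_{\Gamma\in\pi_1(M)/G}\Gamma\cdot Q_G$.

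For (i) I fix $q_0\in Q_0$ and a base point $\widetilde q_0\in p_G^{-1}(q_0)$; since $G$ is normal, $(p_G)_*\pi_1(M_G,\widetilde q_0)=G$ \emph{independently of the choice of $\widetilde q_0$ in the fiber}. By definition of $G_0$ and by hypothesis we have the chain $\imath_*(\pi_1(Q_0,q_0))\subseteq G_0\subseteq G$, and $Q_0$ is connected and locally path-connected; hence the lifting criterion for covering maps applies to $\imath:(Q_0,q_0)\to(M,q_0)$ and yields a (unique, smooth) lift $\widetilde\imath:Q_0\to M_G$ with $p_G\circ\widetilde\imath=\imath$ and $\widetilde\imath(q_0)=\widetilde q_0$.

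For (ii): $\widetilde\imath$ is injective because $p_G\circ\widetilde\imath=\imath$ is injective; its differential is injective because $dp_G$ is a fiberwise isomorphism (local diffeomorphism) and $d\imath$ is injective; and, $Q_0$ being compact and $M_G$ Hausdorff, the continuous injection $\widetilde\imath$ is a topological embedding. Thus $Q_G:=\widetilde\imath(Q_0)$ is an embedded submanifold of $M_G$ and $p_G|_{Q_G}:Q_G\to Q_0$ is a diffeomorphism (in fact an isometry for the lifted metric), i.e. $Q_G\cong Q_0$. For (iii), the inclusion $\supseteq$ is immediate since $p_G(\Gamma\cdot x)=p_G(x)$ forces $\Gamma\cdot Q_G\subseteq p_G^{-1}(Q_0)$; conversely, given $x\in p_G^{-1}(Q_0)$ with $q:=p_G(x)\in Q_0$, both $x$ and $\widetilde\imath(q)$ lie in the fiber $p_G^{-1}(q)$, and since $p_G$ is a normal covering of the connected manifold $M$ its deck group $\pi_1(M)/G$ acts transitively on each fiber, so there is $\Gamma$ with $\Gamma\cdot\widetilde\imath(q)=x$, whence $x\in\Gamma\cdot Q_G$.

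The only genuinely delicate point is step (i): one must check the hypothesis of the lifting criterion with the correct subgroup, i.e. that $\imath_*(\pi_1(Q_0,q_0))$ itself — not merely its normal closure $G_0$ — lies in the subgroup of $\pi_1(M,q_0)$ associated with $p_G$ at the chosen base point. This is exactly where normality of $G$ (so the associated subgroup is $G$ for every base point in the fiber) together with $\imath_*(\pi_1(Q_0,q_0))\subseteq G_0\subseteq G$ is used. Everything else is standard covering-space theory plus compactness of $Q_0$.
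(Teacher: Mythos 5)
Your proof is correct, and it takes a somewhat different route from the paper's. You construct $Q_G$ as the image of a lift $\widetilde\imath:Q_0\to M_G$ of the inclusion, obtained from the lifting criterion using $\imath_*(\pi_1(Q_0,q_0))\subseteq G_0\subseteq G=(p_G)_*\pi_1(M_G,\widetilde q_0)$ (the last identification being base-point independent precisely because $G$ is normal), and then upgrade the lift to an embedding via injectivity of $p_G\circ\widetilde\imath=\imath$, injectivity of $d\widetilde\imath$, and compactness of $Q_0$. The paper instead defines $Q_G$ directly as a connected component $U_j$ of $p_G^{-1}(Q_0)$ and proves by hand that $p_G|_{U_j}$ is injective: two points of $U_j$ over the same $q_0\in Q_0$ are joined by a path in $U_j$, whose projection is a loop in $Q_0$, hence a class in $\imath_*(\pi_1(Q_0))\subseteq G_0\subseteq G$, which therefore cannot act as the nontrivial deck transformation that would be needed to move one preimage to the other. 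The group-theoretic input is identical in both arguments ($\imath_*(\pi_1(Q_0))\subseteq G$, plus transitivity of the deck group $\pi_1(M)/G$ on fibers, which is where normality of $G$ enters), and the final step, namely that every point of $p_G^{-1}(Q_0)$ lies in a deck translate of $Q_G$, is the same. Your version has the small advantage of recording at once that $Q_G$ is a smoothly embedded, in fact isometric, copy of $Q_0$ (which is what is actually used later, when diameters of the lifted submanifolds must be uniformly bounded); the paper's version makes explicit that $Q_G$ is a whole connected component of $p_G^{-1}(Q_0)$, a fact that also holds in your construction (the image of the section $\widetilde\imath$ is open and closed in $p_G^{-1}(Q_0)$) but which you rightly do not need for the statement as given.
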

\vspace{-6mm}
\begin{proof}
Denote with $\{U_j\}_{j\in J}$ the connected components of $p_G^{-1}(Q_0)$, so that 
$p_G^{-1}(Q_0)=\cup_{j\in J} U_j.$ Observe that $p_G|_{U_j}:U_j\rightarrow Q_0$ is injective for every $j\in J$. Indeed, consider $p_0,p_1 \in U_j$ projecting to the same point $q_0\in Q_0$
and pick any path $\alpha:[0,1]\rightarrow U_j$ such that $\alpha(0)=p_0$ and $\alpha(1)=p_1$. Then $[p_G\circ \alpha ]\in \imath_*(\pi_1(Q_0))$; this implies in particular that $[p_G\circ \alpha]$ is a non trivial Deck-transformation, which is clearly impossible since $G_0\subseteq G$.
It follows that $p_G|_{U_j}:U_j\rightarrow Q_0$ is a homeomorphism for every $j\in J$; in particular, $U_j$ is compact for every $j\in J$.

Fix $Q_G:=U_j$ for some $j\in J$. Clearly, for every $\Gamma \in \pi_1(M)/G$, $\Gamma \cdot Q_G$ is a connected component of $p_G^{-1}(Q_0)$. Conversely, 
let $U_G$ be a connected component of $p_G^{-1}(Q_0)$. Consider $q_0\in Q_0$ and its preimages $q\in Q_G$ and $u\in U_G$ under $p_G$. 
Since the cover is normal, the deck group $\pi_1(M)/G$ acts transitively on $p_G^{-1}(q_0)$; therefore, there exists $\Gamma\in \pi_1(M)/G$ such that 
$\Gamma(q)=u$ and hence $U_G\cap \Gamma \cdot Q_G\neq \emptyset$. It follows that $U_G=\Gamma \cdot Q_G$.
\end{proof}

\begin{lemma}
For every $k\geq c(L;Q_0,Q_1)$ the free-time action functional $\A_k$ is bounded from below on every connected component of $\mathcal M_Q$.
\label{boundedness}
\end{lemma}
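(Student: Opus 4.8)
The plan is to reduce the lower bound on a connected component $\mathcal N \subseteq \mathcal M_Q$ to the already-known fact that the critical value $c(L)$ of a suitable cover controls the action of paths in that cover. Fix a connected component $\mathcal N$; by the description of $\pi_0(\mathcal M_Q)$ it corresponds to a class $[\tau]\in \pi_1(M,q_0)/\langle G_0,G_1\rangle$, represented by some path $\tau$ from $q_0$ to $q_1$. Let $G:=\langle G_0,G_1\rangle$, let $p_G:M_G=\widetilde M/G\to M$ be the corresponding cover (with the lifted metric $g_G$), and let $L_G:=dp_G\circ L$ be the lifted Tonelli Lagrangian, so $c(L_G)=c(L;Q_0,Q_1)$ by \eqref{clh0h1}. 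By Lemma \ref{lemmacovering} applied to $G$ (which contains both $G_0$ and $G_1$), we obtain lifts $Q_G^0\cong Q_0$ and $Q_G^1\cong Q_1$ in $M_G$.

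The key step: given $(x,T)\in\mathcal N$ I would lift the curve $\gamma=(x,T):[0,T]\to M$ to a curve $\tilde\gamma$ in $M_G$ with $\tilde\gamma(0)\in Q_G^0$. Because $(x,T)$ lies in the component indexed by $[\tau]$, the endpoint $\tilde\gamma(T)$ lands on the specific lift $\Gamma_{[\tau]}\cdot Q_G^1$ of $Q_1$ determined by $[\tau]$ (here is where the bookkeeping with the deck group in Lemma \ref{lemmacovering} is used). Now fix once and for all a reference path $\sigma_0$ in $M_G$ from the chosen component of $Q_G^0$ to $\Gamma_{[\tau]}\cdot Q_G^1$, with a connecting arc $\rho_0\subseteq Q_G^0$ and $\rho_1\subseteq \Gamma_{[\tau]}\cdot Q_G^1$ chosen so that $\tilde\gamma\#\rho_1\#\sigma_0^{-1}\#\rho_0$ is a closed loop in $M_G$. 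Since $p_G$ is a local isometry, $S_k(\gamma)=S_k(\tilde\gamma)$ where on the right $S_k$ is computed with $L_G$; and for $k\geq c(L;Q_0,Q_1)=c(L_G)$ the definition \eqref{c(L)} of $c(L_G)$ gives $S_k\big(\tilde\gamma\#\rho_1\#\sigma_0^{-1}\#\rho_0\big)\geq 0$. Additivity of $S_k$ over concatenation then yields
$$S_k(\gamma)=S_k(\tilde\gamma)\geq -S_k(\rho_1)-S_k(\sigma_0^{-1})-S_k(\rho_0).$$
The right-hand side is a constant depending only on the component $\mathcal N$ (and on the fixed choices of $\sigma_0,\rho_0,\rho_1$), but crucially not on $(x,T)$ — note that $\rho_0\subseteq Q_G^0$ and $\rho_1\subseteq \Gamma_{[\tau]}\cdot Q_G^1$ can be taken with length (hence $S_k$-action) bounded by a constant depending only on the compact submanifolds $Q_0,Q_1$, exactly as in the argument preceding \eqref{clh0h1}. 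This is the desired lower bound on $\A_k|_{\mathcal N}$.

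The main obstacle I expect is the endpoint-matching inside the cover: one must verify that the lift $\tilde\gamma$ of an arbitrary path in the component $\mathcal N$ genuinely terminates on the one prescribed lift $\Gamma_{[\tau]}\cdot Q_G^1$ of $Q_1$ (and starts on the prescribed lift of $Q_0$, which is automatic once we choose the lift of the initial point), so that a single reference path $\sigma_0$ works uniformly over all of $\mathcal N$. This is precisely the content of the identification of $\pi_0(\mathcal M_Q)$ with $\pi_1(M,q_0)/\langle G_0,G_1\rangle$ together with Lemma \ref{lemmacovering}, so the work is in carefully matching these descriptions rather than in any new estimate. A minor point to handle is that if $\mathcal N$ contains constant paths then the constant loops have $\A_k\geq 0$ trivially from Lemma \ref{completeness0}(2) (or from \eqref{stimaazione} with $l(x)=0$ once $k\ge e_0(L)$, and $k\geq c(L;Q_0,Q_1)\geq e_0(L)$ by Proposition \ref{klminorecl} and \eqref{relazionemane}), so they pose no difficulty; the bound above already covers them as a special case.
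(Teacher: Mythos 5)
Your proposal is correct and takes essentially the same route as the paper: lift to the cover $\widetilde M/\langle G_0,G_1\rangle$, use Lemma \ref{lemmacovering} to place the lifted endpoints on fixed compact lifts of $Q_0$ and $Q_1$ determined by the connected component, close the lifted curve by a curve of uniformly bounded action, and invoke $k\geq c(L;Q_0,Q_1)$ (the critical value of the lifted Lagrangian) to conclude that the resulting closed loop has nonnegative action. The only inessential difference is that the paper closes up with a single connecting path of uniformly bounded action (the endpoints being at uniformly bounded distance), whereas you use a fixed reference path together with short arcs inside the compact lifts of $Q_0$ and $Q_1$.
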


\begin{proof}
Consider $\sigma:[0,T]\rightarrow M$ in some connected component of $\mathcal M_Q$ and
\begin{equation}
M_1:=\widetilde{M}/\langle G_0,G_1\rangle \stackrel{p}{\longrightarrow}  M,
\label{rivestimento}
\end{equation}
where $\widetilde{M}$ is the universal cover. Denote by $\sigma_1$ the lift of $\sigma$ to $M_1$; we lift the metric of $M$ to $M_1$ 
and notice that, by Lemma \ref{lemmacovering}, having fixed the connected component of $\mathcal M_Q$, we have that
$\text{dist} (\sigma_1(0),\sigma_1(T))$
is uniformly bounded. Therefore, there exists a path $\eta_1:[0,1]\rightarrow M_1$ which joins $\sigma_1(T)$ with $\sigma_1(0)$ and has uniformly bounded action 
$$\tilde{S}_k(\eta_1)= \int_0^1 \Big [L_1(\eta_1(t),\dot \eta_1(t))+k \Big ]\, dt  \leq  C ,$$
where $L_1$ denotes the lifted Lagrangian on $M_1$. If $\eta:=p \circ \eta_1$, then the juxtaposition $\sigma \# \eta\in \langle G_0,G_1\rangle$ and, since 
by assumption $k\geq c(L;Q_0,Q_1)$, we get 
$$0 \leq S_k(\sigma \# \eta)= \A_k(\sigma) + S_k(\eta)=\A_k(\sigma) + \tilde{S}_k(\eta_1)\leq \A_k(\sigma) + C$$
from which we deduce that $\A_k(\sigma) \geq -C$.
\end{proof}

\begin{cor}
If $k>c(L;Q_0,Q_1)$, then every Palais-Smale sequence for $\A_k$ in a connected component $\mathcal N$ of $\mathcal M_Q$ that does not contain constant paths admits a converging subsequence. The same holds if $\mathcal N$ contains constant paths, provided that the Palais-Smale sequence is at level $c\neq 0$.
\label{compattezza}
\end{cor}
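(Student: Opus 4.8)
The plan is to reduce the statement to Lemma \ref{lemmalimitatezza}, which guarantees a converging subsequence as soon as the times $T_h$ of the Palais--Smale sequence are bounded and bounded away from zero. So the whole proof amounts to establishing these two-sided bounds on $T_h$ (up to a subsequence); the limit furnished by Lemma \ref{lemmalimitatezza} then lies in $\mathcal M_Q$, and since $\mathcal N$ is a connected component --- hence open and closed --- it in fact lies in $\mathcal N$.

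First I would dispose of the lower bound, which is the easy half and where the hypothesis on $\mathcal N$ and on the level $c$ is used. If $\mathcal N$ contains no constant paths, the chain of inequalities leading to \eqref{stimaazione} in the proof of Lemma \ref{completeness0} shows that $T$ is bounded below by a positive constant on every sublevel set $\{\A_k\le c'\}\cap\mathcal N$; since a Palais--Smale sequence at level $c$ eventually lies in such a sublevel set, $T_h$ is bounded away from zero. If instead $\mathcal N$ contains constant paths but $c\neq 0$, I would argue by contradiction: were $\liminf_h T_h=0$, a subsequence would satisfy $T_h\to 0$ while remaining a Palais--Smale sequence at level $c$, which by Lemma \ref{yesmodification} forces $c=0$ --- a contradiction. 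Hence $\liminf_h T_h>0$ in both cases, and after passing to a subsequence $T_h\ge T_*>0$.

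The hard part will be the upper bound on $T_h$, and this is exactly where the strict inequality $k>c(L;Q_0,Q_1)$ is needed. Put $k':=c(L;Q_0,Q_1)$, so that $k-k'>0$, and observe the trivial identity $\A_k(x,T)=\A_{k'}(x,T)+(k-k')T$, coming straight from the definition of the free-time action functional. By Lemma \ref{boundedness} there is a constant $C$ with $\A_{k'}\ge -C$ on $\mathcal N$, whence
$$\A_k(x_h,T_h)\ \ge\ -C+(k-k')\,T_h .$$
The left-hand side converges to $c$, hence is bounded, and since $k-k'>0$ this forces $T_h$ to be bounded above. Combining with the lower bound, $0<T_*\le T_h\le T^*<+\infty$ along a subsequence, and Lemma \ref{lemmalimitatezza} concludes. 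I expect the only real subtlety to be this upper bound: if $k=c(L;Q_0,Q_1)$ the coefficient $k-k'$ degenerates and Palais--Smale sequences with diverging times may genuinely appear, which is precisely the obstruction that makes subcritical energies harder to treat.
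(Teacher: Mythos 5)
Your proposal is correct and follows essentially the same route as the paper: times bounded away from zero via Lemma \ref{completeness0} (no constant paths) and Lemma \ref{yesmodification} (level $c\neq 0$), the upper bound on $T_h$ from the identity $\A_k=\A_{c(L;Q_0,Q_1)}+(k-c(L;Q_0,Q_1))T$ together with Lemma \ref{boundedness}, and then Lemma \ref{lemmalimitatezza} to extract a converging subsequence. No gaps; your closing remark about the degeneration at $k=c(L;Q_0,Q_1)$ matches the paper's discussion of subcritical energies.
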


\begin{proof} 
Under the assumptions of the corollary we know by Lemmas \ref{completeness0} and \ref{yesmodification} that the times $T_h$ are bounded away from zero. 
Therefore, in virtue of Lemma \ref{lemmalimitatezza} it is enough to show that the $T_h$'s are uniformly bounded from above. Since 
$$\A_k(x,T) = \A_{c(L;Q_0,Q_1)} (x,T)+\Big (k-c(L;Q_0,Q_1)\Big )T$$
for any $(x,T)\in \mathcal M_Q$, the period 
$$T_h =\frac{1}{k-c(L;Q_0,Q_1 )} \Big [\A_k(x_h,T_h)-\A_{c(L;Q_0,Q_1)}(x_h,T_h)\Big ]$$ 
is clearly uniformly bounded from above, being $\A_k$ bounded on the Palais-Smale sequence and being $\A_{c(L;Q_0,Q_1)}(x_h,T_h)$ bounded from below 
by Lemma \ref{boundedness}.  
\end{proof}

\vspace{1mm}

When looking for connecting orbits in case $Q_0,Q_1$ intersect, there is another relevant energy value which we now define. 
In the next section we will namely use Corollary \ref{compattezza} to construct orbits satisfying the conormal boundary conditions as action minimizers. 
However, when minimizing on a  connected component containing constant paths we need to ensure that the infimum is negative (observe that such an infimum cannot be positive). This is not always the case as the example in the introduction shows. 
Thus, let $\mathcal N$ be a connected component of $\mathcal M_Q$ containing constant paths and define the energy value
\begin{equation}
k_{\mathcal N}(L) :=\inf \Big \{k\in \R \ \Big |\ \A_k(\gamma)\geq 0, \ \forall \ \gamma \in \mathcal N\Big \} .
\label{komega(L)}
\end{equation}

By definition we readily see that $c(L;Q_0,Q_1 )\leq k_{\mathcal N}(L)$. In the next section we show that in the interval 
$(c(L;Q_0,Q_1 ),k_{\mathcal N}(L))$ we find Euler-Lagrange orbits in $\mathcal N$ satisfying the conormal boundary conditions by minimizing $\A_k$ on $\mathcal N$. 
Notice that the considered interval might be empty but in general it is not; see Section \ref{counterexamples} for an example.
Existence results above $k_{\mathcal N}(L)$ are in general achievable only under additional assumptions  (c.f. Theorem \ref{teosupercriticalintro}). 

Another ``natural'' energy value is given by 
$$k_0(L) :=  \inf \Big \{k\in \R \ \Big |\ \A_k(\gamma)\geq 0 , \ \forall \ \gamma \in \mathcal M_Q\Big \} .$$

It is interesting to study the relation between $k_0(L)$ and the critical value $c(L;Q_0,Q_1)$ and, more generally, the other critical values we introduced in this section; this will also give us an estimate on how much the 
various critical values can differ. 

Clearly $c(L;Q_0,Q_1) \leq k_0(L)$. 
We claim that actually $c(L) \leq  k_0(L).$ Thus, consider $k<c(L)$; by definition there exists a loop $\delta$ such that $S_k(\delta)<0$. It is now easy to construct a path from $Q_0$ to $Q_1$ with negative action: 
pick any path $\eta$ from a point $q_0\in Q_0$ to the base point $\delta(0)$, then wind $n$-times around $\delta$ and finally join $\delta(0)$ with a point 
$q_1\in Q_1$ by a path $\mu$. If $n$ is large enough then
$$\A_k(\mu\# \delta^n \# \eta) = S_k (\mu) + n S_k(\delta) + S_k(\eta) < 0 ,$$
which implies  $k<k_0(L)$ and the claim follows. Therefore, we have 
$$e_0(L) \leq c_u(L)\leq c(L;Q_0,Q_1) \leq c(L) \leq k_0(L),$$
where the second and third inequalities follow from Lemma \ref{coveringandmane}. It is easy to see that in general there is no relation between $c_0(L)$ and $c(L;Q_0,Q_1)$.

In order to estimate how much the various Ma\~n\'e critical values can differ, one can measure the difference $k_0(L)-e_0(L)$. Thus, consider the smooth one-form 
$$\theta(q)[v]:= d_vL(q,0)[v] ;$$
by taking a Taylor expansion and by using (\ref{secondinequality}), we get that
\begin{align*}
L(q,v)&= L(q,0)+d_vL(q,0)[v] + \frac{1}{2}\, d_{vv}L(q,sv)[v,v] \\ 
          &\geq - E(q,0)+ \theta(q)[v] + a |v|^2,
\end{align*}
where $s\in [0,1]$ is a suitable number. If we set $\gamma(t):=x(t/T)$, then we obtain
\begin{align*}
\A_k(x,T) = \A_k(\gamma) &\geq \int_0^T \Big [-E(\gamma(t),0)+\theta(\gamma(t))[\dot \gamma(t)] + a |\dot \gamma(t)|^2 + k \Big ]\, dt \\ 
   &= \int_0^T \Big [k-E(\gamma(t),0)\Big ]\, dt + \int_0^T \gamma^*\theta + a \int_0^T |\dot \gamma(t)|^2 \, dt\\
   &\geq \Big [k-e_0(L)\Big ] T  + \frac{a}{T} l(\gamma)^2 -  \|\theta\|_\infty  l(\gamma) .
\end{align*}
For $k>e_0(L)$ and $T$ fixed, the latter expression is a parabola in $l(\gamma)$ with minimum
$$(k-e_0(L))  T  -  \frac{\|\theta\|_\infty^2}{4a}T= \left (k-e_0(L) - \frac{\|\theta\|_\infty^2}{4a}\right ) T.$$
In particular, if 
$$k > e_0(L) + \frac{\|\theta\|_\infty^2}{4a} ,$$ 
then $\A_k(\gamma)\geq 0$ for any path $\gamma$ connecting $Q_0$ with $Q_1$ and this implies, by the definition of $k_0(L)$, that $k>k_0(L)$. Therefore we get
$$k_0(L)  \leq e_0(L) + \frac{\|\theta\|_\infty^2}{4a} .$$ 
We sum up the discussion above with the following

\begin{prop}
Let $L:TM\rightarrow \R$ be a Tonelli Lagrangian, $Q_0,Q_1\subseteq M$ closed submanifolds. Then the following chain of inequalities holds 
\begin{equation}
e_0(L) \leq c_u(L) \leq c(L;Q_0,Q_1) \leq c(L) \leq k_0(L) \leq e_0(L) + \frac{\|\theta\|_\infty^2}{4a}.
\label{chainofcriticalvalues}
\end{equation}
In particular, if $\theta\equiv 0$, that is if $L$ is a mechanic Lagrangian as in \eqref{mechaniclag}, we retrieve 
$$e_0(L)= c_u(L)=c(L;Q_0,Q_1)= c(L)= k_0(L).$$
\end{prop}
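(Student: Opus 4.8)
The plan is to assemble the chain of inequalities out of ingredients that are essentially already in place; the statement is a summary of the preceding discussion, so the task is mainly one of bookkeeping. First I would treat the three left-most inequalities. The bound $e_0(L)\le c_u(L)$ is part of \eqref{relazionemane}. For $c_u(L)\le c(L;Q_0,Q_1)$ I would note that the universal cover $\widetilde M$ is also the universal cover of $M_1:=\widetilde M/\langle G_0,G_1\rangle$, so $\widetilde M\to M_1$ is a covering map; applying Lemma \ref{coveringandmane} to it gives $c_u(L)\le c(L_1)=c(L;\langle G_0,G_1\rangle)=c(L;Q_0,Q_1)$ by \eqref{clh0h1}. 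Likewise $M_1\to M$ is a covering map, so the same lemma yields $c(L;Q_0,Q_1)=c(L_1)\le c(L)$.

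Next, for $c(L)\le k_0(L)$ I would reproduce the short argument given just above the statement: if $k<c(L)$ there is, by definition of $c(L)$, a closed loop $\delta$ with $S_k(\delta)<0$; choosing a path $\eta$ from a point $q_0\in Q_0$ to $\delta(0)$ and a path $\mu$ from $\delta(0)$ to a point $q_1\in Q_1$, one has for $n$ large $\A_k(\mu\#\delta^n\#\eta)=S_k(\mu)+nS_k(\delta)+S_k(\eta)<0$, whence $k<k_0(L)$ by definition of $k_0(L)$. Since this holds for every $k<c(L)$, we get $c(L)\le k_0(L)$.

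For the right-most inequality $k_0(L)\le e_0(L)+\|\theta\|_\infty^2/(4a)$, introduce the one-form $\theta(q)[v]:=d_vL(q,0)[v]$ and Taylor-expand $L$ in the fibre variable at $v=0$; the convexity estimate \eqref{secondinequality} gives $L(q,v)\ge -E(q,0)+\theta(q)[v]+a|v|^2$. Integrating along $\gamma(t)=x(t/T)$, bounding $E(\gamma(t),0)\le e_0(L)$, estimating $\left|\int_0^T\gamma^*\theta\right|\le \|\theta\|_\infty\, l(\gamma)$, and using $\int_0^T|\dot\gamma|^2\,dt\ge l(\gamma)^2/T$, one sees that for fixed $T$ and $k>e_0(L)$ the quantity $\A_k(\gamma)$ is bounded below by a quadratic polynomial in $l(\gamma)$ whose minimum equals $\bigl(k-e_0(L)-\|\theta\|_\infty^2/(4a)\bigr)T$. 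Hence $\A_k\ge 0$ on all of $\mathcal M_Q$ as soon as $k>e_0(L)+\|\theta\|_\infty^2/(4a)$, which by definition of $k_0(L)$ forces $k_0(L)\le e_0(L)+\|\theta\|_\infty^2/(4a)$.

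Finally, if $L$ is a mechanical Lagrangian as in \eqref{mechaniclag}, then $d_vL(q,0)=0$, so $\theta\equiv 0$ and $\|\theta\|_\infty=0$; the chain \eqref{chainofcriticalvalues} then forces equality throughout. I do not expect a genuine obstacle here, since every step above is either quoted from \eqref{relazionemane}, Lemma \ref{coveringandmane}, or the elementary computations already carried out; the only point deserving a line of care is verifying that the two covering maps used in the first paragraph are honest covering maps (so that Lemma \ref{coveringandmane} applies) and that the Ma\~n\'e value of the universal cover is indeed obtained by lifting $L$ all the way up, which is immediate from the definition \eqref{cu(L)}.
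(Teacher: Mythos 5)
Your proposal is correct and follows essentially the same route as the paper: the first two nontrivial inequalities via Lemma \ref{coveringandmane} applied to the coverings $\widetilde M\to M_1\to M$, the inequality $c(L)\leq k_0(L)$ by winding around a loop $\delta$ with $S_k(\delta)<0$, and the last bound via the Taylor expansion of $L$ at $v=0$ together with \eqref{secondinequality} and the parabola-in-$l(\gamma)$ estimate. The only difference is that you make explicit the covering maps the paper leaves implicit, which is harmless.
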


%%%%%%%%%%%%%%%%%%%%%%%%%%%%%%%%%%%%%%%%%%%%%%%%

\section{Existence results for high energies}
\label{existenceresultsforhighenergies}

In this section, building on the analytical backgrounds introduced in the previous ones, we prove Theorem \ref{teosupercriticalintro}. 
However, before proving the Theorem we need some preliminaries. 

Thus, suppose that $Q_0,Q_1\subseteq M$ are two closed connected intersecting submanifolds and let $\mathcal N$ be the connected component of 
$\mathcal M_Q$ containing the constant paths. Assume in addition that $Q_0\cap Q_1$ is connected.
We say that $\mathcal N$ has the \textit{retraction property} if there exists a neighborhood $\mathcal U$ of $Q_0\cap Q_1$ in $\mathcal N$ such that $Q_0\cap Q_1$ is a strong deformation retract of $\mathcal U$. 

\begin{oss}
$\mathcal N$ has the retraction property, for instance, if
$Q_0\cap Q_1=\{q\}$ or if  there exists $\epsilon >0$ small enough such that there are no geodesics with length $0<\ell<\epsilon$ from $Q_0$ to $Q_1$ and hitting $Q_0$ and $Q_1$ orthogonally. 
In the first case indeed, if $B_i\subseteq Q_i$ is a contractible neighborhood of $q$ in $Q_i$, for $i=0,1$, then $\mathcal U$ retracts on the image of the map $B_0\times B_1\hookrightarrow \mathcal U$ which associates to every $(q_0,q_1)$ the shortest geodesic connecting $q_0$ with $q_1$. Since $B_0\times B_1$ retracts on $(q,q)$, 
the conclusion follows.

If the second property holds then we can use the negative gradient flow of the kinetic energy 
$$e:H^1_Q([0,1],M)\rightarrow \R,\quad e(x)=\int_0^1|x'(s)|^2\, ds$$
to deform the set $\{(x,T)\in \mathcal N\ |\ e(x)<\epsilon^2\}$ into the set of constant paths. 
Notice that this latter property holds for $Q_0=Q_1$ and for generic choice of $Q_0$ and $Q_1$.
\end{oss}

\begin{proof}[Proof of Theorem \ref{teosupercriticalintro}.] Suppose $\mathcal N'$ is a connected component of $\mathcal M_Q$ not containing constant paths. 
Lemma \ref{completeness0} implies that the sublevels of $\A_k$ in $\mathcal N'$
$$\big \{(x,T)\in \mathcal N'\ \big | \A_k(x,T)\leq c\big \}$$
are complete. Moreover, Corollary \ref{compattezza} implies that $\A_k$ satisfies the Palais-Smale condition on $\mathcal N'$ for every $k>c(L;Q_0,Q_1)$. 
We may then conclude that $\A_k$ has a global minimizer on $\mathcal N'$ by taking a minimizing sequence for $\A_k$ as Palais-Smale sequence.

We prove now statement 2,~(a). Thus, let $\mathcal N$ be the connected component of $\mathcal M_Q$ containing the constant paths. Consider first $k\in (c(L;Q_0,Q_1), k_{\mathcal N}(L))$;
since $c:= \inf \A_k <0\, $, the sublevel sets of $\A_k$ in $\mathcal N$
$$\Big\{(x,T)\in \mathcal N \ \Big |\ \A_k(x,T) \leq c+\epsilon \Big \}$$ 
are complete for every $\epsilon >0$ small by Corollary \ref{cornegative}.  Moreover, Lemma \ref{yesmodification} implies that all the Palais-Smale sequences at level $c$ have $T_h$'s bounded away from zero and hence
$\A_k$ satisfies the Palais-Smale condition at level $c$ by Lemma \ref{boundedness}. We now retrieve the existence of a  global minimizer for $\A_k$ in $\mathcal N$ exactly as above.

Suppose now that $Q_0\cap Q_1$ is connected, $\mathcal N$ has the retraction property, and there exists $l\geq 1$ such that $\pi_l(\mathcal N,Q_0\cap Q_1)\neq 0$. Fix $k>k_{\mathcal N}(L)$; in this case, we retrieve the desired Euler-Lagrange orbit using a minimax argument 
analogous to that used by Lusternik and Fet \cite{FL51} in their proof of the existence of one closed geodesic on a simply connected manifold (see also \cite{Abb13} or \cite{Con06} for an application to the existence of periodic Euler-Lagrange orbits; in that case
$k_{\mathcal N}(L)$ is replaced by $c_u(L)$). 
By assumption there exists a non-trivial element $\mathcal H \in  \pi_l(\mathcal N,Q_0\cap Q_1)$ and therefore we can consider the minimax value 
$$c:= \inf_{h\in \mathcal H} \max_{\zeta \in D^{l}} \A_k(h(\zeta)).$$ 

Let us show that $c>0$; since $\mathcal H$ is non-trivial, there exists a positive number $\lambda$ such that for every map $h=(x,T):(D^{l},S^{l-1})\rightarrow (\mathcal N,Q_0\cap Q_1)$ belonging to the class $\mathcal H$ there holds 
$$\max_{\zeta\in D^{l}} \ l(x(\zeta))\geq \lambda,$$ 
where as usual $l(x(\zeta))$ denotes the length of the path $x(\zeta)$. This follows simply from the fact that $\displaystyle \inf_{\gamma \in \partial \mathcal U} l(\gamma)>0$. If $(x,T)\in \mathcal N$ has length $l(x)\geq \lambda$, then (\ref{firstinequality}) implies that 
\begin{align*}
\A_k(x,T) &= T\int_0^1 \Big [L\Big ( x(s),\frac{x'(s)}{T}\Big ) + k \Big ]\, ds \\
                &\geq \frac{a}{T}\int_0^1 |x'(s)|^2\, ds +  T(k-b)\\
                &\geq \frac{a}{T}  l(x)^2+ T(k-b)\\
                &\geq \frac{a}{T} \lambda^2 +T(k-b) .
\end{align*}

Since $\lambda >0$, the above inequality implies that if $(x,T)\in \mathcal N$ has length $l(x)\geq \lambda$ and action $\A_k(x,T)\leq c+1$ then 
$$c+1 \geq \frac{a}{T} \lambda^2 +T(k-b)$$ 
and hence $T\geq T_0$ for some $T_0>0$, because the quantity on the righthand-side goes to infinity as $T\rightarrow 0$. Now let $h\in \mathcal H$ be such that 
$$\max_{\zeta \in D^{l}}\A_k(h(\zeta)) \leq c+1 ;$$
then by the above considerations there exists $(x,T)\in h(D^{l})$ with $T\geq T_0$ and 
$$\A_k(x,T) =  \A_{k_{\mathcal N}(L)} (x,T) + \big (k-k_{\mathcal N}(L)\big )T \geq \big (k-k_{\mathcal N}(L) \big ) T_0 >0 .$$

The argument above shows that the minimax value $c$ is strictly positive. The celebrated minimax theorem (see for instance \cite[Theorem 1.8 and Remark 1.11]{Abb13} or \cite[Theorem 2.5.3]{Ass15}), 
combined with Lemma \ref{convergenza}, ensures the existence of a Palais-Smale sequence at level $c$. 
Since $c>0$ we also get from Lemma \ref{yesmodification} that the $T_h$'s are bounded away from zero, so that by Corollary \ref{compattezza} the Palais-Smale sequence has a limit point in $\mathcal N$, which gives us the 
required connecting orbit, thus proving 2,~(b).

Suppose now that $Q_0\cap Q_1$ is not connected and at least one connected component $\Omega$ of $Q_0\cap Q_1$ is isolated; in this case it is easy to see that there are always non-trivial classes in $\pi_1(\mathcal N,Q_0\cap Q_1)$. Indeed, any continuous map $u:(D^1,S^0)\rightarrow (\mathcal N, Q_0\cap Q_1)$ such that $u(0)\in \Omega$ and $u(1)\in \Omega'$ for some $\Omega'\neq \Omega$ represents a non-trivial class in $\pi_1(\mathcal N,Q_0\cap Q_1)$. Moreover, for every such a continuous map $u$ we have 
$$\max_{s\in [0,1]} l(u(s))\geq \inf_{\partial \mathcal U} l>0,$$
where $\mathcal U$ is an open set such that $\Omega\subseteq \mathcal U$ and $\Omega'\cap \overline{\mathcal U}=\emptyset$. The proof of 2,~(c) follows now repeating the argument used to prove 2,~(b). 
\end{proof}

\begin{proof}[Proof of Corollary \ref{arnoldchordsupercriticalintro}] The first statement and the first part of the second one follow trivially from the corresponding statements of Theorem \ref{teosupercriticalintro}. The second part of the second statement follows from Theorem \ref{teosupercriticalintro} and the fact that $\pi_l(\mathcal N,Q_0)\cong \pi_{l+1}(M,Q_0)$, for every $l\geq 0$. 

Thus it remains to show that for every $k>c(L;Q_0)$, $k\neq k_{\mathcal N}(L)$, there always exists an Arnold chord if $Q_0\neq M$. By 
connectedness of the spaces we have $\pi_0(M,Q_0)=\{0\}$. Moreover, if $\mathcal N$ is the only connected component of $\mathcal M_Q$, then 
we also have $\pi_1(M,Q_0)=\{0\}$. Therefore, if $\mathcal N$ is the only connected component of $\mathcal M_Q$ and $\pi_l(\mathcal N,Q_0)=\{0\}$ 
for every $l\geq 1$, then $\pi_l(M,Q_0)=\{0\}$ for every $l\geq 0$. 
But then the Hurewicz theorem (cf. \cite[Theorem 4.37]{Hat02}) 
would yield $H_l(M,Q_0)=0$ for every $l\in \N$ and, hence, $H_l(M)\cong H_l(Q_0)$, which is a contradiction for $l=\dim M$.
\end{proof}

%%%%%%%%%%%%%%%%%%%%%%%%%%%%%%%%%%%%%%%%%%%%%%%%%%%

\section{Existence results for low energies}
\label{existenceresultsforlowenergies}

In this section we study the existence of Euler-Lagrange orbits satisfying the conormal boundary conditions for subcritical energies $k<c(L;Q_0,Q_1)$.  As already explained in the introduction, this problem is harder than the corresponding one for supercritical energies. 

Throughout this section we assume that the submanifolds $Q_0$ and $Q_1$ intersect; we will get back to the case $Q_0\cap Q_1=\emptyset$ in Section \ref{counterexamples} 
showing that, under this assumption, the first statement of Theorem \ref{teosupercriticalintro} is optimal.

We start by considering the following particular case, which should help to understand the general situation later on. Let $Q_0$ and $Q_1$ be two closed connected submanifolds which intersect in one point, say $p$.
We show that, under the assumption (\ref{condizionetheta}) on the Lagrangian $L$, for every $k\in (E(p,0),c(L;Q_0,Q_1))$ the action functional $\A_k$ exhibits a mountain-pass geometry on the connected component $\mathcal N$ of $\mathcal M_Q$ 
that contains the constant paths. 

Since for any $k\in (E(p,0),c(L;Q_0,Q_1))$ the free-time action functional $\A_k$ is unbounded from below, it makes sense to define the following class of paths in $\mathcal N$
\begin{equation}
\Gamma := \Big \{ u :[0,1]\rightarrow \mathcal N \ \Big |\ u(0)=(p,T) ,\ T\leq T_0,\  \A_k(u(1))<0 \Big \}  ,
\label{gammasemplice}
\end{equation}
where $T_0>0$ is chosen so small that the class $\Gamma$ is invariant under (a suitable truncation of) the negative gradient flow of $\A_k$. The existence of $T_0$ will be shown  in Remark \ref{osstruncation}.
Define now 
\begin{equation}
\theta_q(\cdot) := d_v L (q,0)[\cdot], \quad \forall q\in M ,
\label{definizionetheta}
\end{equation}
and assume that there exists an open neighborhood $\mathcal U$ of $p$ such that
\begin{equation}
\theta_q \equiv 0, \quad \forall  q \in \mathcal U .
\label{condizionetheta}
\end{equation}

Without loss of generality we may suppose that $\mathcal U=B_r$ is an open ball with radius $r$ around $p$. Under the assumption (\ref{condizionetheta}) we show the desired mountain-pass geometry 
for the action functional $\A_k$. Namely, we prove that there is $\alpha>0$ such that 
$$\max_{s\in[0,1]} \A_k(u(s)) \geq \alpha , \quad \forall u\in \Gamma.$$

Here is the scheme of the proof: we first show that if the length of a path $\gamma$ connecting $Q_0$ and $Q_1$ is sufficiently small then the action of $\gamma$ needs to be non-negative. 
Therefore, for every element $u\in \Gamma$ there must be an $s\in [0,1]$ such that $l(u(s))=\epsilon$ for a suitable $\epsilon>0$. Now we get the assertion showing that every path with length 
$\epsilon$  has $\A_k$-action bounded away from zero by a positive constant.

Since $Q_0$ and $Q_1$ intersect only in $p$, for every $\delta>0$ there exists $\lambda_\delta>0$ such that  
$$d(Q_0\setminus B_\delta, Q_1\setminus B_\delta) \geq \lambda_\delta ,$$
where $B_\delta$ denotes the ball with radius $\delta$ around $p$. In other words, every path connecting $Q_0$ to $Q_1$ with starting and ending point outside $B_\delta$ has length larger than $\lambda_\delta$. It is clear now that, if $\epsilon>0$ is sufficiently small,
then every path $\gamma$ connecting $Q_0$ to $Q_1$ with length $l(\gamma)\leq \epsilon$ is entirely contained in $\mathcal U=B_r$. Indeed, fix $\delta<r$. If $\epsilon <\min \{r-\delta,\lambda_\delta\}$, then at least one between the starting and ending point of $\gamma$ is contained in 
$B_\delta$, say $\gamma(0)\in B_\delta$, and 
\begin{equation}
d(\gamma(t),p) < d(\gamma(t),\gamma(0))+ d(\gamma(0),p) < \epsilon + \delta < r , \quad \forall  t .
\label{gammainu}
\end{equation}
A Taylor expansion together with the bound (\ref{secondinequality}) implies
\begin{align}
L(q,v) &= L(q,0) + d_vL(q,0)[v] + \frac12 d_{vv} L(q,sv)[v,v] \nonumber \\ 
           &\geq - E(q,0) + \theta_q(v) + a |v|^2 .
\label{stimacontheta}
\end{align}

Let now $k>E(p,0)$. Up to choosing a smaller neighborhood $\mathcal U$ of $p$ (thus, a smaller $\epsilon$), the continuity of the energy implies that 
$$k > \sup_{q \in \mathcal U} E(q , 0 ) .$$
Using (\ref{condizionetheta}), (\ref{gammainu}) and (\ref{stimacontheta}) we now compute for every $\gamma=(x,T)$ with length $l(x)\leq \epsilon$ 
\begin{align*}
\A_k(x,T) &\geq \int_0^T \Big [-E(\gamma(t),0) + \theta_{\gamma(t)}(\gamma'(t)) + a \, |\gamma'(t)|^2 + k \Big ]\, dt \\ 
                &\geq T \big (k-E(p,0)\big ) + \frac{a}{T} \, l(x)^2 
\end{align*}
which is a non-negative quantity. It follows that for every $u\in \Gamma$ there is $s\in[0,1]$ such that $l(u(s))=\epsilon$; for such $s$ we obtain 
\begin{align*}
\A_k(u(s)) &\geq T \big (k-e_0(L)\big ) + \frac{a}{T} \epsilon^2 \\
&\geq 2\epsilon \sqrt{a(k-e_0(L))} \\
& =: \alpha
\end{align*}
as we wished to prove. 

\begin{oss}
Since $\A_k(p,T) = T  \big (k-E(p,0)\big ) > 0 $ goes to zero as $T\rightarrow 0$, we can choose $T_0$ in the definition of $\Gamma$ such that $\A_k(p,T_0)\leq \alpha/4$. In this 
way the class $\Gamma$ becomes invariant under the negative gradient flow of $\A_k$ truncated below level $\alpha/2$.
\label{osstruncation}
\end{oss}

We are now ready to deal with the general case. Let $Q_0,Q_1\subseteq M$ be closed and connected submanifolds with non-empty intersection. Let $\Omega\subseteq Q_0\cap Q_1$ be an isolated connected component of $Q_0\cap Q_1$ and denote with $\mathcal N$ the connected component of $\mathcal M_Q$ containing the constant paths. Define 
\begin{equation}
k_{\Omega} := \min \left \{c(L;Q_0,Q_1),\, \max_{q\in \Omega} E(q,0) + \max_{q\in \Omega} \frac{|\theta_q|^2}{4a}\right \},
\label{kq0q1}
\end{equation}
where $\theta_q$ is as in (\ref{definizionetheta}), $|\cdot|$ is the dual norm on $T^*M$ induced by the Riemannian metric on $M$ and $a>0$ is such that (\ref{secondinequality}) is satisfied. Now set the minimax class
\begin{equation}
\Gamma := \Big \{u=(x,T) :[0,1]\rightarrow \mathcal N \ \Big |\ x(0)=p\in \Omega, \ T(0)\leq T_0, \ \A_k(u(1))<0 \Big \} ,
\label{Gamma}
\end{equation}
where as above $T_0$ will be chosen such that $\A_k(p,T_0)\leq \alpha/4$ for every $p\in \Omega$, where 
$\alpha>0$ will be the constant given by Lemma \ref{lemmaminimax1}. In this way $\Gamma$ will become invariant under the negative gradient flow of $\A_k$
truncated below level $\alpha/2$.

Lemma \ref{lemmaminimax1} states that, for every $k \in (k_{\Omega},c(L;Q_0,Q_1))$, the action functional $\A_k$ has a mountain-pass geometry on 
$\mathcal N$, where the two valleys are represented by the constant paths (in $\Omega$) and by the paths with negative action. 
Notice however, that the considered interval could be empty; this happens, for instance, when 
$$\max_{q\in \Omega} E(q,0) = e_0(L),\quad \max_{q\in \Omega} \frac{|\theta_q|^2}{4a} = \frac{\|\theta\|_\infty^2}{4a} ,$$
as the chain of inequalities (\ref{chainofcriticalvalues}) shows. However, this is not always the case as we will show in the counterexample section. 
Observe also that, when $Q_0\cap Q_1=\{p\}$ with $\theta_p=0$, the energy value $k_{Q_0\cap Q_1}$ reduces to the above considered $E(p,0)$.
 
\begin{lemma}
Let $Q_0,Q_1\subseteq M$ be two closed connected submanifolds with non-empty intersection, let $\Omega$ be an isolated connected component of $Q_0\cap Q_1$ and let $k_{\Omega}$ be as  defined in (\ref{kq0q1}). For every $k\in (k_{\Omega},c(L;Q_0,Q_1))$ there exists 
$\alpha>0$ such that 
$$\inf_{u\in \Gamma} \max_{s\in [0,1]} \A_k(u(s)) \geq \alpha.$$
\label{lemmaminimax1}
\end{lemma}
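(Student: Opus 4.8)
The plan is to mimic the argument carried out above in the model case $Q_0\cap Q_1=\{p\}$ with $\theta_p=0$, but now localized near the isolated component $\Omega$ and without the simplifying hypothesis \eqref{condizionetheta}. The strategy is again in two steps: first, show that any path $\gamma=(x,T)$ connecting $Q_0$ to $Q_1$ whose length $l(x)$ is smaller than a suitable threshold $\epsilon>0$ is forced to lie entirely inside a small neighborhood of $\Omega$ and to have non-negative $\A_k$-action; second, show that every path connecting $Q_0$ to $Q_1$ with $l(x)=\epsilon$ has $\A_k$-action bounded below by a positive constant $\alpha$. Granted these two facts, any $u\in\Gamma$ must, by the intermediate value theorem applied to $s\mapsto l(u(s))$ (which starts at $0$ at $u(0)=(p,T(0))$ with $p\in\Omega$ and whose endpoint $u(1)$ has negative action, hence length $\geq\epsilon$), pass through a path of length exactly $\epsilon$, at which point its action is $\geq\alpha$; this gives the claimed minimax estimate.

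First I would fix a small $\delta>0$ so that $B_\delta(\Omega)$ is disjoint from all other connected components of $Q_0\cap Q_1$ (possible since $\Omega$ is isolated), and observe that there is $\lambda_\delta>0$ with $d\big(Q_0\setminus B_\delta(\Omega),\,Q_1\setminus B_\delta(\Omega)\big)\geq\lambda_\delta$; hence any path from $Q_0$ to $Q_1$ of length $<\lambda_\delta$ must have one endpoint in $B_\delta(\Omega)$, and if its length is also $<r-\delta$ for a neighborhood $B_r(\Omega)=:\mathcal U$, the whole path stays in $\mathcal U$, exactly as in \eqref{gammainu}. Next, using the Taylor expansion \eqref{stimacontheta},
$$L(q,v)\geq -E(q,0)+\theta_q(v)+a|v|^2\geq -E(q,0)-\frac{|\theta_q|^2}{4a}+\frac{a}{2}|v|^2,$$
where the last step completes the square in $v$: $\theta_q(v)+a|v|^2\geq -\tfrac{|\theta_q|^2}{4a}+\tfrac{a}{2}|v|^2$ is false as stated — more carefully, $\theta_q(v)+\tfrac a2|v|^2\geq -\tfrac{|\theta_q|^2}{2a}$, so $\theta_q(v)+a|v|^2\geq \tfrac a2|v|^2-\tfrac{|\theta_q|^2}{2a}$; I will track the exact constant, but the point is that for $\gamma=(x,T)$ contained in $\mathcal U$,
$$\A_k(x,T)\geq T\Big(k-\sup_{q\in\mathcal U}E(q,0)-\sup_{q\in\mathcal U}\tfrac{|\theta_q|^2}{4a}\Big)+\frac{a}{2T}l(x)^2.$$
Since $k>k_\Omega$, by continuity of $E$ and of $q\mapsto|\theta_q|$ we may shrink $\mathcal U$ (hence $\epsilon$) so that the coefficient of $T$ is still positive; then the right-hand side is non-negative for paths of small length, and for length exactly $\epsilon$ it is $\geq 2\epsilon\sqrt{\tfrac a2\big(k-\sup_{\mathcal U}E(q,0)-\sup_{\mathcal U}\tfrac{|\theta_q|^2}{4a}\big)}=:\alpha>0$ by minimizing the parabola in $T$. (Here I must also note: if $k_\Omega=c(L;Q_0,Q_1)$ the interval is empty and there is nothing to prove, so we may assume $k_\Omega$ equals the second term in \eqref{kq0q1}.)

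I would close by choosing $T_0$ as indicated after \eqref{Gamma}, namely so small that $\A_k(p,T_0)=T_0\big(k-E(p,0)\big)\leq\alpha/4$ uniformly for $p\in\Omega$ (using compactness of $\Omega$), which guarantees both that the endpoint condition on $u(0)$ in $\Gamma$ is compatible with being below $\alpha/2$ and, as will be used later, that $\Gamma$ is invariant under the truncated negative gradient flow. The main obstacle, compared with the model case, is that $\theta$ need not vanish near $\Omega$, so the term $\theta_{\gamma(t)}(\gamma'(t))$ cannot simply be dropped; it must be absorbed into the kinetic term at the cost of the additive constant $\sup_{\mathcal U}\tfrac{|\theta_q|^2}{4a}$, and the whole argument only survives because this constant is precisely what appears in the definition \eqref{kq0q1} of $k_\Omega$ — so the delicate point is to run the completion-of-the-square estimate with the sharp constant $a$ from \eqref{secondinequality} and verify that shrinking $\mathcal U$ does not destroy the strict inequality $k>\max_{\mathcal U}E(q,0)+\max_{\mathcal U}\tfrac{|\theta_q|^2}{4a}$, which holds for $\mathcal U$ small since $k>k_\Omega=\max_\Omega E(q,0)+\max_\Omega\tfrac{|\theta_q|^2}{4a}$.
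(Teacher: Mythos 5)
Your overall architecture is the same as the paper's: localize near the isolated component $\Omega$ (distance argument giving $\lambda_\delta$, so short paths from $Q_0$ to $Q_1$ near $\Omega$ stay in a small neighborhood $\mathcal U$), show that short paths have non-negative action and paths of length exactly $\epsilon$ have action bounded below by a positive $\alpha$, conclude by the intermediate value theorem on $s\mapsto l(u(s))$, and choose $T_0$ with $\A_k(p,T_0)\leq \alpha/4$. The problem is the central quantitative step, and it is a genuine gap, not a bookkeeping issue. You try to absorb $\theta_q(v)$ into the kinetic term by completing the square, but, as you yourself half-notice, one cannot simultaneously keep a positive multiple of $|v|^2$ and get the sharp constant $\tfrac{|\theta_q|^2}{4a}$: the sharp bound $\theta_q(v)+a|v|^2\geq -\tfrac{|\theta_q|^2}{4a}$ destroys the kinetic term entirely, while your corrected version $\theta_q(v)+a|v|^2\geq \tfrac a2|v|^2-\tfrac{|\theta_q|^2}{2a}$ only yields
\begin{equation*}
\A_k(x,T)\ \geq\ T\Big(k-\sup_{\mathcal U}E(q,0)-\sup_{\mathcal U}\tfrac{|\theta_q|^2}{2a}\Big)+\frac{a}{2T}\,l(x)^2 .
\end{equation*}
For $k$ slightly above $k_\Omega=\max_\Omega E(q,0)+\max_\Omega\tfrac{|\theta_q|^2}{4a}$ and $\theta|_\Omega\not\equiv 0$, the coefficient of $T$ here is negative no matter how much you shrink $\mathcal U$ (shrinking only brings the suprema down to the maxima over $\Omega$, it does not repair the $2a$ versus $4a$ discrepancy), so your estimate proves the lemma only on the strictly smaller interval above $\max_\Omega E+\max_\Omega\tfrac{|\theta_q|^2}{2a}$, not on all of $(k_\Omega,c(L;Q_0,Q_1))$. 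Your closing remark that the delicate point is to ``run the completion-of-the-square estimate with the sharp constant'' is precisely the step that cannot be carried out in this form.

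The paper sidesteps this by never completing the square at the level of the Lagrangian density: it keeps the magnetic term linear in length, $\int_0^T\theta_{\gamma(t)}(\gamma'(t))\,dt\geq -c_\theta\, l(x)$ with $c_\theta=\sup_{\mathcal U}|\theta_q|$, and then studies $f(T,l)=(k-c_E)T+\tfrac aT l^2-c_\theta l$. Minimizing over $T$ for fixed $l$ gives $\big(2\sqrt{a(k-c_E)}-c_\theta\big)l$, which is positive exactly when $k>c_E+\tfrac{c_\theta^2}{4a}$, i.e.\ under \eqref{kmaggiorecectheta}; this yields both the non-negativity for short paths and $\alpha=\big(2\sqrt{a(k-c_E)}-c_\theta\big)\epsilon>0$ on the full interval, with the sharp constant from \eqref{kq0q1}. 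Alternatively, your scheme can be repaired by a $k$-dependent split: since after shrinking $\mathcal U$ one has $k>c_E+\tfrac{c_\theta^2}{4a}$ strictly, choose $s\in(0,1)$ with $k>c_E+\tfrac{c_\theta^2}{4sa}$ and use $\theta_q(v)+a|v|^2\geq (1-s)a|v|^2-\tfrac{|\theta_q|^2}{4sa}$, which retains a (small) kinetic coefficient $(1-s)a>0$. Either fix is needed; as written, the proof does not establish the statement on the claimed energy range.
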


\vspace{-6mm}

\begin{proof}
The proof follows from the one in the particular case treated above with minor adjustments. Consider a neighborhood $\mathcal U$ of $\Omega$ such that 
\begin{equation}
k > \sup_{q\in \mathcal U} E(q,0) + \sup_{q\in \mathcal U} \frac{|\theta_q|^2}{4a}.
\label{kmaggiorecectheta}
\end{equation}

As in the particular case one shows now that, if $\epsilon >0$ is sufficiently small, then every path joining $Q_0$ to $Q_1$ with length less than or equal to $\epsilon$ and contained 
in the connected component of $\{(x,T)\in \mathcal N \ |\ l(x)\leq \epsilon\}$ containing $\Omega$ has image contained in $\mathcal U$ (here is where we need the assumption 
$\Omega$ isolated). 
Pick now such an $\epsilon$; using (\ref{stimacontheta}) we compute for every $\gamma=(x,T)$ with $l(x)\leq \epsilon$
\begin{align*}
\A_k(x,T) &\geq \int_0^T \Big [-E(\gamma(t),0) + \theta_{\gamma(t)}(\gamma'(t)) + a |\gamma'(t)|^2 + k \Big ]\, dt  \\ 
                 &\geq \left (k - \sup_{q\in \mathcal U} \ E(q,0) \right ) T + \frac{a}{T}\, l(x)^2 + \int_0^T \theta_{\gamma(t)}(\gamma'(t))\, dt \\ 
                 &\geq \left (k - \sup_{q\in \mathcal U} \ E(q,0) \right ) T+ \frac{a}{T}\, l(x)^2 - \left (\sup_{q\in \mathcal U} |\theta_q|\right ) l(x) .
\end{align*}
To ease the notation let us define 
$$c_E :=  \sup_{q\in \mathcal U} E(q,0),  \quad c_\theta := \sup_{q\in \mathcal U} |\theta_q|$$ 
and consider the function of two variables 
$$f:(0,+\infty)\times [0,\epsilon] \rightarrow \R ,\quad f(T, l) := \big (k - c_E\big ) T + \frac{a}{T}  l^2 - c_\theta  l.$$
For every $l$ fixed the function $f$ has minimum 
$$\left ( 2\sqrt{a(k-c_E)} - c_\theta\right ) l $$
and this quantity is positive if and only if (\ref{kmaggiorecectheta}) is satisfied. Now, arguing as above, we get that for every $u\in \Gamma$ there exists $s\in [0,1]$ such that $l(u(s))=\epsilon$. For this $s$ we readily have
$$\A_k(u(s)) \geq \left ( 2\sqrt{a(k-c_E)} - c_\theta\right ) \epsilon =: \alpha> 0 ,$$
exactly as we wished to prove.
\end{proof}

\begin{oss}
When $\Omega$ consists of more than one point one would be tempted to replace in the definition of $k_{\Omega}$ the maximum of the energy on $\Omega$ with the corresponding minimum, hence defining 
\begin{equation}
k_{\Omega}^- := \min \left \{c(L;Q_0,Q_1), \, \min_{q\in \Omega} E(q,0)  + \max_{q\in \Omega} \frac{|\theta_q|^2}{4a}\right \},
\end{equation}
and show that the conclusion of Lemma \ref{lemmaminimax1} holds even considering the a priori larger interval $(k_{\Omega}^-,c(L;Q_0,Q_1))$. This is however not the case, since under these assumptions there are constant paths with negative $\A_k$-action. However, it seems reasonable to us that an argument analogous to the one in \cite{Abb13}, where the case of periodic orbits is considered and $k_{\Omega}^-$, $k_{\Omega}$ are replaced by $\min E$, $e_0(L)$ respectively, should go through in this setting, at least under some 
mild additional assumptions (such as $\Omega$ being a CW-complex). Namely, in the energy range $(k_{\Omega}^-,k_{\Omega})$, instead of the class $\Gamma$, one should 
consider the class of deformations 
$u=(x,T):[0,1]\times \Omega \rightarrow \mathcal N$
of the space of constant paths into the space of paths with negative $\A_k$-action
\begin{equation*}
\Gamma_{\Omega} := \Big \{u=(x,T) \ \Big | \ x(0,q) = q , \ \A_k(u(1,q))< 0, \ \forall q\in \Omega\Big \} .
\end{equation*}
However, it is a priori not clear why the class $\Gamma_\Omega$ should be non-empty. Indeed, in order to show that the corresponding class in the periodic setting is non-empty, besides the CW-complex structure of $M$ one has to use the iteration of loops and Bangert's trick of pulling one loop at a time \cite{Ban80} (we refer again to \cite{Abb13} and references therein for the details), which do not immediately generalize to our setting. 
This will be subject of future research.  
\end{oss}
%The proof of the following lemma is analogous to the one of lemma \ref{lemmaminimax1}.
%
%\begin{lemma}
%For every $k\in (k_{\Omega}^-,k_{Q_0\cap Q_1})$ there exists $\alpha>0$ such that for every $u\in \Gamma_{\Omega}$ there holds 
%$$\max_{(s,q)\in [0,1]\times \Omega} \A_k(u(s,q))  \geq \alpha.$$
%\label{lemmaminimax2}
%\end{lemma}

If $\Omega$ is not isolated then the proof of Lemma \ref{lemmaminimax1} might fail. Nevertheless, we can define the energy value $k_\Omega$ in a suitable fashion so that
the conclusion of Lemma \ref{lemmaminimax1} still holds. Thus, we say that a collection $\nu$ of connected components of $Q_0\cap Q_1$ is an \textit{isolating family} for $\Omega$ 
if $\Omega\in \nu$ and there exists $\epsilon >0$ such that $B_\epsilon (\nu)\cap \Omega'=\emptyset$ for every connected component $\Omega'$ of $Q_0\cap Q_1$
that is not contained in $\nu$. Notice that the union of all connected components of $Q_0\cap Q_1$ is an isolating family for every connected component (in particular, 
isolating families always exist) and that $\nu =\{\Omega\}$ is an isolating family if $\Omega$ is isolated.

With slight abuse of notation we denote with $\nu$ both the isolating family and the union of all sets in the isolating family. 
For every isolating family $\nu$ for $\Omega$ we define 
$$k_\nu := \min \left \{c(L;Q_0,Q_1), \max_{q\in \nu} E(q,0)  + \max_{q\in \nu} \frac{|\theta_q|^2}{4a}\right \}.$$
It is easy to see now that the argument in the proof of Lemma \ref{lemmaminimax1} goes through replacing $k_\Omega$ with $k_\nu$. Since this holds for every isolating family
we can define 
\begin{equation}
k_\Omega:=\inf \left \{k_\nu \ \Big |\ \nu \text{ isolating family for } \Omega \right \}.
\label{komega}
\end{equation}

It is easy to see that this definition of $k_\Omega$ coincides with the one given in \eqref{kq0q1} if $\Omega$ is isolated. Indeed, there clearly holds $k_{\nu_1}\leq k_{\nu_2}$ 
if $\nu_1\subseteq \nu_2$ (meaning that every connected component of $Q_0\cap Q_1$ contained in $\nu_1$ is also contained in $\nu_2$), and $\{\Omega\}$ is the smallest 
isolating family for $\Omega$ if $\Omega$ is isolated.

The next lemma follows now directly from Lemma \ref{lemmaminimax1}, keeping in mind the new definition of $k_\Omega$.

\begin{lemma}
Let $Q_0,Q_1\subseteq M$ be two closed connected submanifolds with non-empty intersection, let $\Omega$ be a connected component of $Q_0\cap Q_1$ and let $k_{\Omega}$ be as  defined in \eqref{komega}. For every $k\in (k_{\Omega},c(L;Q_0,Q_1))$ there exists 
$\alpha>0$ such that 
$$\inf_{u\in \Gamma} \max_{s\in [0,1]} \A_k(u(s)) \geq \alpha.$$
\end{lemma}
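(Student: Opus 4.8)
The plan is to reduce the statement to Lemma~\ref{lemmaminimax1} by unwinding the new definition \eqref{komega} of $k_\Omega$ in the non-isolated case. Fix $k\in(k_\Omega,c(L;Q_0,Q_1))$. By \eqref{komega}, $k_\Omega=\inf\{k_\nu\mid \nu \text{ isolating family for }\Omega\}$, so there exists an isolating family $\nu$ for $\Omega$ with $k_\nu<k$; that is, $k\in(k_\nu,c(L;Q_0,Q_1))$. The key observation is that the argument proving Lemma~\ref{lemmaminimax1} only uses that $\Omega$ is isolated in one place, namely to guarantee the existence of a neighborhood $\mathcal U$ of $\Omega$ satisfying \eqref{kmaggiorecectheta} such that every path joining $Q_0$ to $Q_1$ with length at most $\epsilon$ that lies in the connected component of $\{(x,T)\in\mathcal N\mid l(x)\le\epsilon\}$ containing $\Omega$ has image inside $\mathcal U$. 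For an isolating family $\nu$ the same holds: by definition there is $\epsilon>0$ with $B_\epsilon(\nu)$ disjoint from every connected component of $Q_0\cap Q_1$ not contained in $\nu$, so we may take $\mathcal U$ to be a neighborhood of the compact set $\nu$ contained in $B_\epsilon(\nu)$ and, after shrinking, satisfying the energy-and-one-form estimate
$$k>\sup_{q\in\mathcal U}E(q,0)+\sup_{q\in\mathcal U}\frac{|\theta_q|^2}{4a},$$
which is possible precisely because $k>k_\nu$ and $k_\nu$ is built from $\max_{q\in\nu}E(q,0)+\max_{q\in\nu}|\theta_q|^2/(4a)$ together with the continuity of $E$ and of $q\mapsto|\theta_q|$.

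Next I would run the length-threshold dichotomy exactly as in Lemma~\ref{lemmaminimax1}. Choose $\epsilon>0$ small enough that any path $\gamma$ from $Q_0$ to $Q_1$ with $l(\gamma)\le\epsilon$ lying in the connected component of $\{l(x)\le\epsilon\}$ containing $\Omega$ is contained in $\mathcal U$; this uses that the distance from $\nu$ to the complement of $B_\epsilon(\nu)$ in $Q_0\cap Q_1$ is positive together with the usual triangle-inequality estimate as in \eqref{gammainu}. Then for every $\gamma=(x,T)$ with $l(x)\le\epsilon$ the Taylor estimate \eqref{stimacontheta} gives
$$\A_k(x,T)\ \ge\ \Big(k-\sup_{q\in\mathcal U}E(q,0)\Big)T+\frac{a}{T}\,l(x)^2-\Big(\sup_{q\in\mathcal U}|\theta_q|\Big)l(x),$$
and minimizing the right-hand side over $T>0$ produces the bound $\big(2\sqrt{a(k-c_E)}-c_\theta\big)l(x)$ with $c_E,c_\theta$ as in the proof of Lemma~\ref{lemmaminimax1}; this is $\ge 0$ thanks to the displayed inequality above. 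Hence every element $u\in\Gamma$ must hit length exactly $\epsilon$: indeed $u(0)=(p,T)$ with $p\in\Omega$ has $l(x(0))=0$, while $\A_k(u(1))<0$ forces $l(x(1))>\epsilon$ by the estimate just obtained, so by continuity there is $s\in[0,1]$ with $l(u(s))=\epsilon$. For that $s$ the same estimate yields $\A_k(u(s))\ge\big(2\sqrt{a(k-c_E)}-c_\theta\big)\epsilon=:\alpha>0$, and since $u\in\Gamma$ was arbitrary we conclude $\inf_{u\in\Gamma}\max_{s\in[0,1]}\A_k(u(s))\ge\alpha$, which is the assertion.

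The only genuinely new point over Lemma~\ref{lemmaminimax1} is the bookkeeping that an isolating family still confines short paths to a controlled neighborhood, and that $k>k_\Omega$ already yields $k>k_\nu$ for some concrete $\nu$; everything else is verbatim. The main obstacle, if any, is making precise that the connected component of $\{(x,T)\in\mathcal N\mid l(x)\le\epsilon\}$ that contains $\Omega$ cannot ``leak'' toward connected components of $Q_0\cap Q_1$ outside $\nu$ for $\epsilon$ small: this is where one uses that $\nu$, being a finite(-type) union of closed connected components, is at positive distance from the remaining components, so a path of length $\le\epsilon$ starting in $\Omega\subseteq\nu$ stays in $B_\epsilon(\nu)$ and in particular cannot reach another component. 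Once this confinement is granted, choosing $\mathcal U\subseteq B_\epsilon(\nu)$ and shrinking to enforce \eqref{kmaggiorecectheta} completes the reduction, and the lemma follows directly from (the proof of) Lemma~\ref{lemmaminimax1}.
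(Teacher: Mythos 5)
Your proposal is correct and follows essentially the same route as the paper: the paper likewise observes that the proof of Lemma \ref{lemmaminimax1} goes through verbatim with an isolating family $\nu$ in place of the isolated component (the isolating property again confining short paths in the relevant connected component of the length sublevel to a neighborhood $\mathcal U$ of $\nu$ where \eqref{kmaggiorecectheta} holds), and then deduces the statement for $k>k_\Omega=\inf_\nu k_\nu$ by picking $\nu$ with $k_\nu<k$. Your write-up just makes explicit the bookkeeping the paper leaves as ``it is easy to see''.
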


\noindent We can now define the minimax function
\begin{equation}
c_\Omega:\big (k_\Omega, c(L;Q_0,Q_1 )\big ) \longrightarrow \R,\quad  c_\Omega(k) := \inf_{u\in \Gamma} \max_{[0,1]} \A_k\circ u.
\label{minimaxfunction1}
\end{equation}
%and 
%\begin{equation}
%c_\Omega:\big (k_{\Omega}^-,k_{\Omega} \big ) \longrightarrow  \R, \quad c_\Omega(k) := \inf_{u\in \Gamma_{\Omega}} \ \max_{[0,1]\times \Omega} \A_k\circ u .
%\label{minimaxfunction2}
%\end{equation}

Lemma \ref{lemmaminimax1}  above implies that $c_\Omega(k)>0$ for all $k$; furthermore, the monotonicity of $\A_k$ in $k$ implies that the minimax function $c_\Omega(\cdot)$ is monotonically increasing and hence almost everywhere differentiable. In Lemma \ref{struwe}  we
prove the existence of bounded Palais-Smale sequences for every value of the parameter $k$ at which the minimax functions $c_\Omega(\cdot)$ is differentiable, thus overcoming the lack of the Palais-Smale 
condition for $\A_k$ for subcritical energies. The proof is analogous to the one in the periodic case (see \cite{Con06} and \cite{Abb13} for further details) and is based 
on the celebrated \textit{Struwe monotonicity argument}  (cf. \cite{Str90}).

\begin{lemma}
Suppose that $\bar{k}$ is a point of differentiability for the minimax function $c_\Omega(\cdot)$  in (\ref{minimaxfunction1}). Then $\A_{\bar k}$ admits a bounded Palais-Smale sequence at level $c_\Omega(\bar{k})$.
\label{struwe} 
\end{lemma}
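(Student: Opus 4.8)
The strategy is the classical Struwe monotonicity argument, adapted to the free-time action functional $\A_k$. Fix a point $\bar k$ at which the increasing function $c_\Omega(\cdot)$ is differentiable, and pick a sequence $k_j\downarrow \bar k$ with $k_j\in(k_\Omega,c(L;Q_0,Q_1))$. The key quantitative input is that for any $(x,T)\in\mathcal M_Q$ one has the exact identity
$$\A_{k_j}(x,T)=\A_{\bar k}(x,T)+(k_j-\bar k)\,T,$$
so that the ``extra'' action picked up by raising the energy parameter is precisely $(k_j-\bar k)$ times the time $T$. This is what allows one to read off an a priori bound on $T$ along suitably chosen near-optimal paths.

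\textbf{Step 1: a priori time bound along almost-optimal paths.} For each $j$ choose $u_j\in\Gamma$ which is almost optimal for the minimax value $c_\Omega(k_j)$, say
$$\max_{s\in[0,1]}\A_{k_j}(u_j(s))\le c_\Omega(k_j)+(k_j-\bar k).$$
Writing $u_j(s)=(x_j(s),T_j(s))$, I will show that at any parameter $s$ where $\A_{\bar k}(u_j(s))$ is close to $c_\Omega(\bar k)$ (more precisely, $\ge c_\Omega(\bar k)-(k_j-\bar k)$), the time $T_j(s)$ is bounded above by a constant independent of $j$. Indeed at such $s$,
$$(k_j-\bar k)\,T_j(s)=\A_{k_j}(u_j(s))-\A_{\bar k}(u_j(s))\le c_\Omega(k_j)+(k_j-\bar k)-c_\Omega(\bar k)+(k_j-\bar k),$$
and since $c_\Omega$ is differentiable at $\bar k$ we have $c_\Omega(k_j)-c_\Omega(\bar k)=O(k_j-\bar k)$; dividing by $(k_j-\bar k)$ gives $T_j(s)\le C$ for a uniform constant $C=C(\bar k,c_\Omega'(\bar k))$. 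Call $\mathcal C_j\subseteq[0,1]$ the set of such ``high'' parameters; on $\mathcal C_j$ the paths $u_j$ live in the region $\{T\le C\}$.

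\textbf{Step 2: a deformation/min-max argument to produce a Palais-Smale sequence.} Suppose, for contradiction, that there is no Palais-Smale sequence for $\A_{\bar k}$ at level $c_\Omega(\bar k)$ with times bounded, i.e. there exist $\delta,\rho>0$ such that $|d\A_{\bar k}(x,T)|\ge\delta$ for every $(x,T)\in\mathcal N$ with $|\A_{\bar k}(x,T)-c_\Omega(\bar k)|\le\rho$ and $T\le C+1$. Using the (truncated, conformally rescaled) negative gradient flow of $\A_{\bar k}$ — which is complete on $\mathcal N$ by the discussion preceding Lemma~\ref{convergenza}, and which keeps $\Gamma$ invariant by the choice of $T_0$ in Remark~\ref{osstruncation} — one deforms the portion of $u_j$ lying in the critical strip down by a fixed amount of action. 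The crucial point is that the flow decreases $T$ along flow lines where $\partial\A_{\bar k}/\partial T>0$; combined with the bound $T_j\le C$ on $\mathcal C_j$ from Step~1, the deformation stays in the region $T\le C+1$ where the gradient is bounded below, so one genuinely lowers the max of $\A_{\bar k}\circ u_j$ below $c_\Omega(\bar k)-\tfrac{\rho}{2}$ for $j$ large. But then $\A_{k_j}\circ(\text{deformed }u_j)$ is also strictly below $c_\Omega(k_j)$ for $j$ large, since $\A_{k_j}-\A_{\bar k}=(k_j-\bar k)T\le (k_j-\bar k)(C+1)\to 0$; this contradicts the definition of the minimax value $c_\Omega(k_j)$. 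Hence the desired bounded Palais-Smale sequence for $\A_{\bar k}$ at level $c_\Omega(\bar k)$ exists.

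\textbf{Main obstacle.} The delicate part is Step~2: one must perform the deformation only on the part of the path where the time is controlled, and check that outside the critical strip (where no deformation is needed) the values of $\A_{k_j}\circ u_j$ are already safely below $c_\Omega(k_j)-\eta$ for a fixed $\eta>0$, using the almost-optimality of $u_j$ and the smallness of $k_j-\bar k$. In particular one needs that the parameters $s$ where $T_j(s)$ is \emph{large} are precisely those where $\A_{\bar k}(u_j(s))$ is \emph{small} (well below $c_\Omega(\bar k)$), so that no deformation is required there and the large, uncontrolled times are harmless — this is exactly what the identity in Step~1 delivers. Keeping track of the interplay between the truncation level $\alpha/2$ of the gradient flow (from Remark~\ref{osstruncation}), the constant $C$ from Step~1, and the gradient lower bound $\delta$ is the bookkeeping heart of the proof; the remaining ingredients are standard and borrowed verbatim from the periodic case in \cite{Con06,Abb13}.
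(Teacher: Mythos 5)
Your overall architecture (Struwe monotonicity) and your Step 1 coincide with the paper's: differentiability gives $|c_\Omega(k_h)-c_\Omega(\bar k)|\le M(k_h-\bar k)$, and on the portion of a near-optimal path $u_h\in\Gamma$ where $\A_{\bar k}\ge c_\Omega(\bar k)-\epsilon_h$ the identity $\A_{k_h}-\A_{\bar k}=(k_h-\bar k)T$ forces $T\le M+2$; the paper also records the resulting uniform $H^1$-bound via \eqref{firstinequality}, so the relevant set is genuinely bounded in $\mathcal N$ (your contradiction hypothesis only quantifies over $T\le C+1$, but this is harmless since the $H^1$-bound follows from the action and time bounds).

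The genuine gap is in your final contradiction, which you run at level $k_j$ rather than at level $\bar k$. You need, in your own words, that ``outside the critical strip the values of $\A_{k_j}\circ u_j$ are already safely below $c_\Omega(k_j)-\eta$ for a fixed $\eta>0$''. This is not justified and is false in general: on the low part of $u_j$ (where $\A_{\bar k}\le c_\Omega(\bar k)-\epsilon_j$) the times are completely uncontrolled, so $(k_j-\bar k)T_j(s)$ need not be small there, and near-optimality only gives the \emph{upper} bound $\A_{k_j}(u_j(s))\le c_\Omega(k_j)+\epsilon_j$; points with very negative $\A_{\bar k}$ and huge $T$ can have $\A_{k_j}$ arbitrarily close to (or above) $c_\Omega(k_j)$. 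Since the deformation is the truncated negative gradient flow of $\A_{\bar k}$, which decreases $\A_{\bar k}$ but not $\A_{k_j}$ (it moves $T$), you cannot conclude that the deformed path has $\A_{k_j}$-maximum below $c_\Omega(k_j)$, and the contradiction with the minimax value at $k_j$ evaporates. The repair is exactly the paper's conclusion, which your intermediate claim almost delivers: contradict the minimax value at level $\bar k$ instead. On the low part, $\A_{\bar k}\le c_\Omega(\bar k)-\epsilon_j$ is preserved because the flow is $\A_{\bar k}$-decreasing; on the high part (bounded, where the gradient lower bound $\delta$ applies and the bounded vector field keeps you in the bounded region for time $1$), the action drops by $\tfrac12\delta^2$, which beats $(M+2)\epsilon_j$ for $j$ large. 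Hence $\max\A_{\bar k}\circ\Phi_1(u_j)\le c_\Omega(\bar k)-\epsilon_j<c_\Omega(\bar k)$, and since $\Phi_1(u_j)\in\Gamma$ (the truncation of Remark \ref{osstruncation} and Lemma \ref{convergenza} keep the endpoints fixed and the class invariant), this contradicts $c_\Omega(\bar k)=\inf_{u\in\Gamma}\max\A_{\bar k}\circ u$. With this change of target level your argument becomes the paper's proof.
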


\begin{proof}
Since $\bar k$ is a point of differentiability for $c_\Omega(\cdot)$ we have 
\begin{equation}
|c_\Omega(k)-c_\Omega(\bar k)|\leq M |k-\bar k|
\label{modulocontinuita}
\end{equation}
for all $k$ sufficiently close to $\bar k$, where $M>0$ is a suitable constant. Let $\{k_h\}$ be a strictly decreasing sequence which converges to 
$\bar{k}$ and set $\epsilon_h:=k_h -\bar{k} \downarrow 0$. For every $h\in \N$ choose $u_h\in \Gamma$ (or $\Gamma_{Q_0\cap Q_1}$) such that 
$$\max_{u_h} \A_{k_h} \leq c_\Omega(k_h) + \epsilon_h.$$

Up to ignoring a finite numbers of $k_h$'s we may suppose that equation (\ref{modulocontinuita}) is satisfied by every $k_h$. If $z=(x,T)\in u_h$ is such that $\A_{\bar{k}}(z)>c_\Omega(\bar{k})-\epsilon_h$, then 
$$T = \frac{\A_{k_h}(z)-\A_{\bar{k}}(z)}{k_h-\bar{k}}\leq \frac{c_\Omega(k_h)+\epsilon_h - c_\Omega(\bar{k}) + \epsilon_h}{\epsilon_h} \leq M+2.$$
Moreover, 
$$\A_{\bar{k}}(z)\leq \A_{k_h}(z) \leq c_\Omega(k_h)+\epsilon_h \leq c_\Omega(\bar{k}) + (M+1)\epsilon_h$$
and hence
$$u_h  \subseteq  \mathcal A_h  \cup  \Big \{\A_{\bar{k}}  \leq  c_\Omega(\bar{k}) - \epsilon_h\Big \} ,$$
where 
$$\mathcal A_h= \Big \{ (x,T) \in \mathcal N \ \Big |\ T \leq  M+2 ,\ \A_{\bar{k}}(x,T)\leq c_\Omega(\bar{k}) + (M+1)\epsilon_h\Big \} .$$ 
Observe that, if $(x,T)\in \mathcal A_h$, then by (\ref{firstinequality}) we have
$$\A_{\bar{k}}(x,T) \geq \frac{a}{M+2}  \big \|x'\big \|_2^2 - (M+2)\big |b-\bar{k}\big |$$
and hence 
$$\big \|x'\big \|_2^2 \leq \frac{M+2}{a} \Big ( c_\Omega(\bar{k}) + (M+1)\epsilon_h + (M+2) \big |b - \bar{k}\big |\Big ),$$
which shows that $\mathcal A_h$ is bounded in $\mathcal N$, uniformly in $h$. Let $\Phi$ be the flow of the vector field obtained by multiplying $-\nabla \A_{\bar{k}}$ by a suitable non-negative function, whose role is to make the vector field bounded on $\mathcal N$ and vanishing on the sublevel $\big \{\A_{\bar{k}}\leq c_\Omega(\bar{k})/2 \big\}$, while keeping the uniform decrease condition 
\begin{equation}
\frac{d}{d\sigma} \A_{\bar{k}}(\Phi_\sigma (z)) \leq - \frac{1}{2}  \min \Big \{ \big \|d\A_{\bar{k}}(\Phi_\sigma(z))\big \|^2, 1\Big \} ,\quad\text{if}\ \A_{\bar{k}} (\Phi_\sigma (z)) \geq \frac{c_\Omega(\bar{k})}{2} .
\label{decrescita}
\end{equation}

Lemma \ref{convergenza} implies that $\Phi$ is well-defined on $[0,+\infty)\times \mathcal N$ and that $\Gamma$ is positively invariant with respect to $\Phi$. Since $\Phi$ maps bounded sets into bounded sets, 
\begin{equation}
\Phi\big ([0,1]\times u_h\big ) \subseteq \mathcal B_h \cup \Big \{\A_{\bar{k}} \leq c_\Omega(\bar{k})-\epsilon_h\Big \}
\label{fi}
\end{equation}
for some uniformly bounded set 
\begin{equation}
\mathcal B_h \subseteq \Big \{ \A_{\bar{k}} \leq c_\Omega(\bar{k}) + (M+1)\epsilon_h\Big \} .
\label{Bh}
\end{equation}
We claim that there exists a sequence $\{z_h\}\subseteq \mathcal N$ with 
$$z_h  \in  \mathcal B_h \cap \Big \{ \A_{\bar{k}} \geq c_\Omega(\bar{k}) - \epsilon_h \Big \}$$
and $\big \|d\A_{\bar{k}}(z_h)\big \|$ infinitesimal. Such a sequence is clearly a bounded Palais-Smale sequence at level $c_\Omega(\bar{k})$. Assume by contradiction that there exists $\delta \in (0,1)$  such that
$$\big \|d\A_{\bar{k}}\big \| \geq  \delta,\quad \text{on}\  \mathcal B_h  \cap \Big \{ \A_{\bar{k}} \geq c_\Omega(\bar{k}) - \epsilon_h \Big \}$$
for every $h$ large enough. Together with (\ref{decrescita}), (\ref{fi}) and (\ref{Bh}), this implies that, for $h$ large enough, for any $z\in u_h$ such that 
$$\Phi \big ([0,1]\times \{z\} \big ) \subseteq \Big \{\A_{\bar{k}}\geq c_\Omega(\bar{k})-\epsilon_h\Big \}$$
there holds 
$$\A_{\bar{k}}(\Phi_1(z)) \leq \A_{\bar{k}}(z) - \frac{1}{2} \delta^2  \leq c_\Omega(\bar{k}) + (M+1)\epsilon_h - \frac{1}{2} \delta^2.$$ 
It follows that 
$$\max_{\Phi_1 (u_h)}\A_{\bar{k}}\leq c_\Omega(\bar{k}) - \epsilon_h$$ 
for $h$ large enough. Since $\Phi_1(u_h)\in \Gamma$, this contradicts the definition of $c(\bar{k})$.
\end{proof}

\begin{proof}[Proof of Theorem \ref{teosubcriticalintro}]
Follows combining Lemma \ref{struwe} above with Lemma \ref{lemmalimitatezza} and with the fact that a monotonically increasing function is differentiable almost everywhere.
\end{proof}

\begin{oss}
The minimax functions $c_{\Omega}$ do not provide in general different critical points of $\A_k$. The only convenience to pick one different minimax class for 
each connected component $\Omega$ of the intersection $Q_0\cap Q_1$ is that, taking the infimum over all $k_{\Omega}$, one gets an a priori better 
critical value and, hence, a sharper result.
\end{oss}

%%%%%%%%%%%%%%%%%%%%%%%%%%%%%%%%%%%%%%%%%%%%%%%%%%%%%%%

\section{Counterexamples} 

Throughout this section $\Sigma$ will be a closed connected orientable surface and $\widetilde \Sigma$ will be its universal cover. 
Consider the hyperbolic plane 
$$\HH := \Big \{(x_1,x_2) \in \R^2 \ \Big | \ x_2>0\Big\}$$
endowed with the Riemannian metric 
\begin{equation}
g_{(x_1,x_2)} := \frac{1}{x_2^2} \left ( dx_1^2 + dx_2^2\right ).
\label{hyperbolicmetric}
\end{equation}
We refer to \cite{BKS91} for generalities and properties of  $(\HH,g)$. We define
\begin{equation}
L:T\HH\longrightarrow \R, \quad L(q,v) = \frac12 |v|_q^2 + \theta_q(v);
\label{hyperboliclagrangian}
\end{equation}
where $\theta_{(x_1,x_2)} =\frac{dx_1}{x_2}$ is the ``canonical primitive'' of the standard area form 
$$\sigma = \frac{1}{x_2^2} dx_1\wedge dx_2.$$ 
It is well-known that $c(L)=\frac12$ (c.f. \cite[Section 5.2]{CFP10}). In fact, the Hamiltonian associated with $L$ is 
$$H(q,p) = \frac12 |p-\theta_q|^2$$
and hence \eqref{hamiltoniancharacterization} implies that
$$c(L) = \inf_{u\in C^\infty(\HH)} \sup_{q\in \HH} \frac12 |d_qu-\theta_q|^2 \leq \frac12 ,$$
as $|\theta_q|\equiv 1$. Computing the $(L+k)$-action of the clockwise arc-length parametrization $\gamma_r$ of a (hyperbolic) circle with radius $r$ yields the opposite inequality. Indeed, using 
$$l(\gamma_r) = 2\pi \sinh r , \quad \operatorname{Area} (D_r) = 2\pi \big (\cosh r - 1\big ) ,$$
we readily compute for the action of $\gamma_r$
$$S_k(\gamma_r) = \int_0^{l(\gamma_r)} \Big [\frac12 |\dot \gamma_r(t)|^2 + k \Big ]\, dt  + \int_{\gamma_r} \theta = \pi  \Big (k-\frac12 \Big)\, e^r + f(r),$$
with $f(r)$ uniformly bounded function of $r$. It follows that,  for every $k<\frac12$ 
$$S_k(\gamma_r)\longrightarrow -\infty$$
as $r$ goes to infinity, thus showing that $c(L)\geq \frac12$. The restriction of the Euler-Lagrange flow to the energy level set $E^{-1}(\frac12)$ is the celebrated \textit{horocycle flow} of Hedlund (cf. \cite{BKS91} and  \cite{Hed32}). 
Its peculiarity relies on the fact that, once projected to a compact quotient of $\HH$, it becomes \textit{minimal}, meaning that every orbit is dense. For $k<\frac12$, the Euler-Lagrange flow on $E^{-1}(k)$ is periodic 
and the projections of the orbits to $\HH$ describe circles with hyperbolic (thus, euclidean) radius going to zero as $k\rightarrow 0$.

%%%%%%%%%%%%%%%%%%%%%%%%%%%%%%%%%%%%%%%%%%%%%%%%%%%%%%%

\vspace{3mm}

\noindent \textbf{Orbits connecting two points.} In this subsection we show an approximated counterexample to Contreras' result \cite{Con06} about the existence of Euler-Lagrange orbits connecting two points $q_0\neq q_1 \in M$. 
Strictly speaking, for every $\epsilon>0$, we embed the flow of the Lagrangian in (\ref{hyperboliclagrangian}) into any surface $\Sigma$ in a suitable fashion. If the points $q_0$ and $q_1$ are chosen properly, then they cannot be connected by orbits with energy less than $c_u-\epsilon$.

Thus, consider the Euler-Lagrange flow on $T\HH$ associated to the Lagrangian in (\ref{hyperboliclagrangian}) and fix $\epsilon>0$, $q_0\in \HH$. We know that, for every $k<\frac12$, the restriction of the Euler-Lagrange flow to $E^{-1}(k)$ is periodic and orbits describe hyperbolic (hence, euclidean) circles with the same hyperbolic radius. If we denote by $\rho(q_0,v)$ the euclidean radius of the (projection of the unique) Euler-Lagrange 
orbit through $(q_0,v)$, then we readily have 
$$\rho := \max_{k \leq  \frac12 -\epsilon} \max_{|v|=k} \rho(q_0,v) < \infty .$$

Let now $B_1\subseteq B_2 \subseteq B_3$ be open connected sets containing $q_0$ such that all Euler-Lagrange orbits with energy less than 
$\frac12 -\epsilon$ starting from $q_0$ are entirely contained in $B_1$.
We extend $\theta|_{B_1}$ to be equal to zero outside $B_2$ using a suitable cut-off function and embed $B_3$ in $\Sigma$. The embedding induces a Riemannian metric on a subset $\mathcal U$ of $\Sigma$ which can be extended to a metric on the whole $\Sigma$ and also a 1-form on $\Sigma$ obtained simply by setting the pull-back of $\theta$ to be zero outside $\mathcal U$.
\begin{figure}[h]
\begin{center}
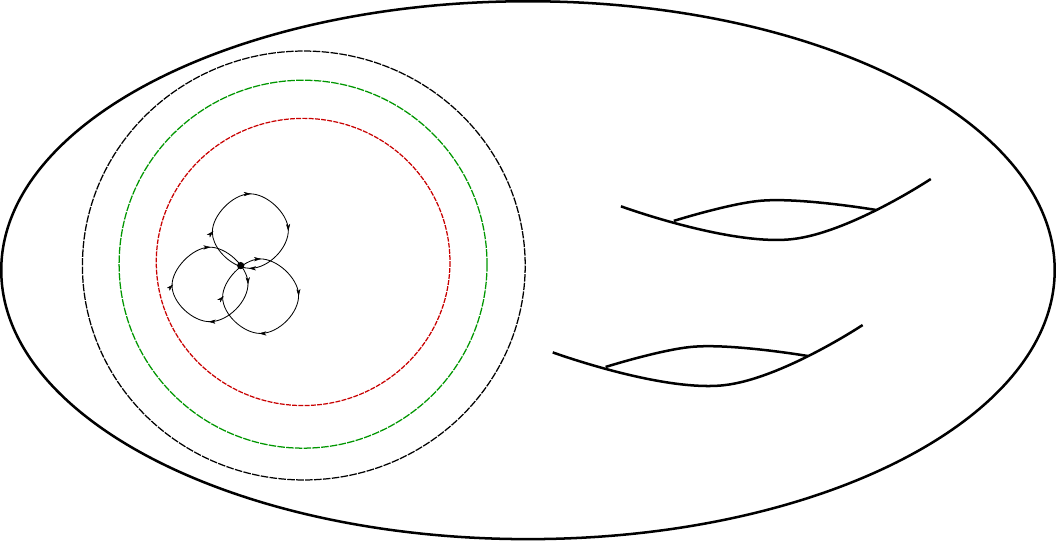
\end{center} 
\caption{An approximated counterexample to Contreras' result.}
\label{figure2}
\end{figure}

We denote the metric, the 1-form and the point on $\Sigma$ given by the embedding again with $g,\theta,q_0$ respectively and define the magnetic Lagrangian
$$L_\epsilon:T\Sigma \rightarrow \R , \quad L(q,v) = \frac12 \, |v|^2 + \theta_q(v) .$$

If we now consider $q_1\in \Sigma \setminus \mathcal U$, then by construction there are no Euler-Lagrange orbits with energy $k$ connecting $q_0$ with $q_1$ for every $k<\frac12 -\epsilon$. This automatically implies that $c_u(L_\epsilon)\geq \frac12 - \epsilon$ by Contreras' result.
At the same time, by \eqref{hamiltoniancharacterization},
$$c_u(L_\epsilon) = \inf_{u \in C^\infty(\widetilde \Sigma)} \sup_{\tilde q\in \widetilde \Sigma}  \frac12 |d_{\tilde q} u - \tilde \theta_{\tilde q}|^2 \leq \frac12 ,$$
where $\tilde \theta$ denotes the lift of $\theta$ to $\widetilde \Sigma$, since $|\theta_q|\leq 1$ for every $q\in \Sigma$.

%%%%%%%%%%%%%%%%%%%%%%%%%%%%%%%%%%%%%%%%%%%%%%%%%%%%

\vspace{4mm}

\label{counterexamples}

\noindent \textbf{Supercritical energies.} In this subsection we prove Theorem \ref{counterexample1intro}. We also show that in general one cannot expect the existence of local minimizers (necessarily not global) for $\A_k$ in the energy range $(c_u(L),c(L;Q_0,Q_1))$, even if the configuration space is a surface; this is in sharp contrast with what happens in the case of periodic orbits (see e.g. \cite{AMMP14} or \cite{CMP04}).

The example constructed in the previous subsection suggests that below $c_u(L)$ we might not expect to find orbits connecting two given disjoint submanifolds. However in the example we gave above we had $c_u(L)=c(L;Q_0,Q_1)$. The natural question is now to study what happens for 
$$k\in \big (c_u(L),c(L;Q_0,Q_1)\big )\, .$$

In fact, for every energy in this range every point $q_0$ of $Q_0$ can be joined with every point $q_1$ of $Q_1$. Namely, for every $k$ in this energy range, the free-time action functional $\A_k$ on the space $\mathcal M_q$ of $H^1$-paths 
from $q_0$ to $q_1$ is bounded from below and satisfies the Palais-Smale condition; it follows that $\A_k$ has a global minimizer on each connected component of $\mathcal M_q$, which therefore corresponds to an Euler-Lagrange orbit from $q_0$ to $q_1$.
What is not clear is whether such an orbit connecting $q_0$ to $q_1$ also satisfies the conormal boundary conditions. 
Actually, it does not need to, as we now show. Namely, we exhibit an example of a magnetic Lagrangian and disjoint submanifolds $Q_0,Q_1$ such that $c_u(L)<c(L;Q_0,Q_1)$ and for every $k<c(L;Q_0,Q_1)$ there are no orbits satisfying the conormal boundary conditions. 
We shall start producing a situation where 
$$0< c_u(L) < c(L);$$
this is inspired by the construction in \cite{Man96}. Think of $\T^2$ as the square $[0,1]^2$ in $\R^2$ with identified sides and equipped with the euclidean metric and consider the magnetic Lagrangian 
\begin{equation}
L:T\T^2 \longrightarrow \R , \quad L (q,v) = \frac12  |v|^2 + \psi(y) v_x ,
\label{teorema1ottimale}
\end{equation}
where $q=(x,y)$, $v=(v_x,v_y)$ and $\psi:[0,1]\rightarrow [0,1]$ is a smooth cut-off function compactly supported in $(0,1)$ with $\psi \leq 1$, $\psi(\frac12)=1$, and $\psi'\geq 0$ 
on $[0,\frac 12]$, $\psi'\leq 0$ on $[\frac 12,1]$.

The Lagrangian in (\ref{teorema1ottimale}) is a magnetic Lagrangian with magnetic 1-form $\theta_q(\cdot) = \psi(y)dx$. Clearly $|\theta_q|=|\psi(y)|$ for every $q=(x,y)$ and hence 
$$c(L) = \inf_{u\in C^\infty(\T^2)} \max_{q\in \T^2} \ \frac12 \, |d_qu-\theta_q|^2 \leq \frac12 .$$

Conversely, consider the path $a:[0,1]\rightarrow \R^2,\, a(t) \ = \ (1-t\, ,\, \frac12)$; it is clear that $a$ is closed as a path in $\T^2$. We now readily compute for $k>0$ 
$$S_k(a) =  \int_0^1 \left (\frac12 \, |\dot a(t)|^2 + \psi(a(t))\dot a_x(t) +k \right )\, dt = \int_0^1 \left (\frac12  |1|^2  -1 +k \right )\, dt = k -  \frac12,$$
which is negative for every $k<\frac12$. We may then conclude that $c(L)=\frac12$.
\begin{figure}[h]
\begin{center}
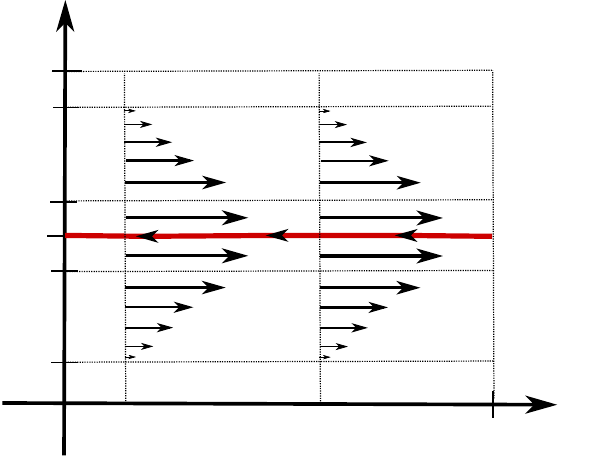
\end{center}
\label{figure3}
\caption{The Ma\~n\'e critical value $c(L)$ of the Lagrangian $L$ in \eqref{teorema1ottimale} equals $\frac 12$.}
\end{figure}

\noindent Again, by the Hamiltonian characterization of the Ma\~n\'e critical value we have 
$$c_u(L) = \inf_{u\in C^\infty(\R^2)} \sup_{q\in \R^2} \ \frac12 \, |d_qu-\theta_q|^2\leq \frac18$$
as one gets by choosing $u:\R^2\rightarrow \R$, $u(x,y) = \frac{x}{2}$:
$$\frac12  \left |\frac12 dx - \psi(y)dx\right |^2= \frac12  \left |\frac12 - \psi(y) \right|^2\leq \frac18 , \quad \forall (x,y)\in \R^2 ,$$
since $0 \leq \psi(y)\leq 1$ for every $y$. On the other hand, for every $n\in \N$ consider the contractible loop $\alpha_n := e\#d^n\#c\#b^n$
obtained concatenating the following paths $b,c,d,e$ with constant speed $\sqrt{2k}$:  
\begin{align*}
& b:[0,\frac{1}{\sqrt{2k}}]\rightarrow \R^2, \quad t\mapsto (1-\sqrt{2k}\, t, \frac 12)\\
& c:[0,\frac{1}{2\sqrt{2k}}]\rightarrow \R^2, \quad t \mapsto (0 , \frac 12 - \sqrt{2k} \, t )\\
& d:[0,\frac{1}{\sqrt{2k}}]\rightarrow \R^2, \quad t\mapsto (\sqrt{2k}\, t, 0)\\
& e:[0,\frac{1}{2\sqrt{2k}}]\rightarrow \R^2, \quad t\mapsto (1,\sqrt{2k}\, t).
\end{align*}
A straightforward computation shows that, for $k<\frac 18$,  
\begin{align*}
S_k(\alpha_n) & = n\cdot S_k(b)+S_k(c)+n \cdot S_k(d)+S_k(e) \\
                        & = \frac{n}{\sqrt{2k}} (2k - \sqrt{2k}) + \frac{2k}{2\sqrt{2k}} + \frac{n}{\sqrt{2k}} (2k)+  \frac{2k}{2\sqrt{2k}} \\
                        & = n \left (2\sqrt{2k} - 1 \right ) + \sqrt{2k}
                        \end{align*}
goes to $-\infty$ as $n\rightarrow +\infty$. It follows that $c_u(L)=\frac 18$.

We pick now $Q_0$ to be any point in $\T^2$, for instance $(\frac12,0)$ and $Q_1$ to be the circle $\{y=\frac12\}$; by construction we have 
$$c(L;Q_0,Q_1) = c(L)= \frac12 .$$
Since $\pi_0(\mathcal M_Q)\cong \Z$, Theorem \ref{teosupercriticalintro} implies that for every $k>\frac 12$ there are infinitely many Euler-Lagrange orbits with energy $k$ satisfying the conormal boundary conditions. Namely, there is one such orbit for every connected component of $\mathcal M_Q$, which is in addition a global minimizer of  $\A_k$ on its connected component. 
Furthermore, for every $k\in \big (\frac 18,\frac 12 \big )$ and any point $q_1\in Q_1$ there are infinitely many Euler-Lagrange orbits with energy $k$ joining $q_0=Q_0$ with $q_1$. However, none of these can satisfy the conormal boundary conditions for $Q_1$ since by the obstruction (\ref{necessarylag}) this is possible only above energy $\frac12$ 
(observe indeed that in this example $k(L;Q_0,Q_1)=c(L;Q_0,Q_1)=\frac 12$). 

The same counterexample holds clearly for every point of the form $q_0=(\frac12,h)$ for every $h>0$, in particular showing that we might not expect to find Euler-Lagrange orbits 
with energy less than $c(L;Q_0,Q_1)$ satisfying the conormal boundary conditions, even if the two submanifolds are ``close'' to each other.
Notice that this example for $q_0=(\frac12,\frac12)$ is not in contradiction with Theorem \ref{teosubcriticalintro}, since in this case we have 
$$ k_{Q_0\cap Q_1} = c(L;Q_0,Q_1) .$$

\vspace{2mm}

The Lagrangian in \eqref{teorema1ottimale} gives also a sharp counterexample to Contreras' result. Namely, if $q_0=(\frac 12,0)$ and $q_1=(\frac 12,\frac 12)$, then 
there are no Euler-Lagrange orbits connecting the two points for $k\leq \frac 18 =c_u(L)$. This can be seen as follows: The function $I:T\T^2\rightarrow \R$ given by 
$I(q,v) := v_x+\psi(y)$
is an integral of the motion. Computing $I(q_0,v)$ and $I(q_1,v)$ for every $v\in \R^2$ with $|v|=\sqrt{2k}$ yields 
$$I(q_0,v)=v_x\in [-\sqrt{2k},\sqrt{2k}],\quad I(q_1,v)=v_x+1 \in [1-\sqrt{2k},1+\sqrt{2k}].$$
Since the two intervals are disjoint for $k<\frac 18$, there are no orbits connecting the two points with energy $k<\frac 18$. Moreover, for $k=\frac 18$, the only possible
Euler-Lagrange orbit connecting the two points must start and end parallel to the $x$-axis. However, this is not possible because the orbits starting from $q_0$ or $q_1$ 
with tangent vector parallel to the $x$-axis are periodic orbits parallel to the $x$-axis. 
Note moreover that, for $k=\frac 18$, there are heteroclinic orbits connecting the two periodic 
orbits $\{y=0\}$ and $\{y=\frac 12\}$, provided that the function $\psi$ has non-degenerate minimum and maximum at $y=0$ and $y=\frac 12$, respectively.

\vspace{2mm}

We finally observe that the Lagrangian in (\ref{teorema1ottimale}) can also be used to give an example in which the interval $(c(L;Q_0,Q_1),k_{\mathcal N}(L))$ as in the statement 2-(a) of Theorem \ref{teosupercriticalintro} is non-empty. 
Just consider as $Q_0$ and $Q_1$ two small (contractible) intersecting circles with center on $\{y=\frac12\}$. 

\begin{figure}[h]
\begin{center}
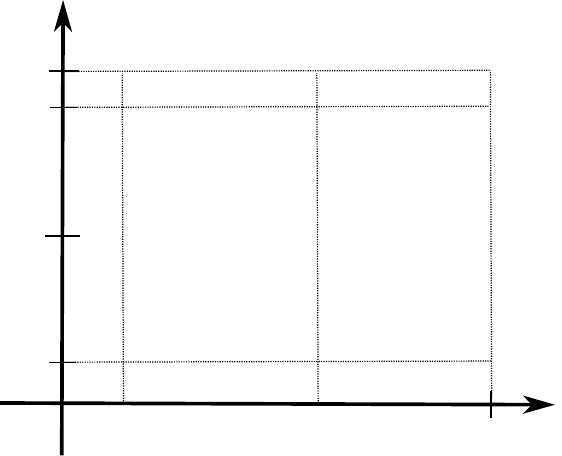
\end{center}
\caption{An example where $c(L;Q_0,Q_1)<k_{\mathcal N}(L)$.}
\label{figure4}
\end{figure}

\noindent One readily sees that for such a choice of submanifolds there holds 
$$c(L;Q_0,Q_1)  = c_u(L)  <  c(L)  = k_{\mathcal N}(L) =  \frac12 .$$ 
Indeed, the path $a:J\rightarrow \T^2$ with constant speed 1 depicted in Figure \ref{figure4} lies in the connected component $\mathcal N$ containing the constant paths and satisfies $\A_k(a)=|J| (k-\frac 12)$; this shows that $k_{\mathcal N}(L)\geq \frac 12$ and hence automatically $k_{\mathcal N}(L)=\frac 12$, as on the other hand by \eqref{chainofcriticalvalues} we have 
$$k_{\mathcal N}(L)\leq e_0(L)+\frac{\|\theta\|_\infty^2}{4a} = \frac{\|\theta\|_\infty^2}{2}=\frac 12,$$
since $e_0(L)=0$ and $a=\frac 12$ for magnetic Lagrangians, and $\|\theta\|_\infty=1$.

%%%%%%%%%%%%%%%%%%%%%%%%%%%%%%%%%%%%%%%%%%%%%%%%%
\vspace{5mm}

\noindent \textbf{Subcritical energies.} In this subsection we prove Theorem \ref{counterexample2intro} by constructing an example of magnetic Lagrangian $L:T\Sigma\rightarrow \R$ 
and intersecting submanifolds $Q_0,Q_1$ such that $c_u(L)\in [\frac 32,2]$, $k_{Q_0\cap Q_1}=\frac 12$, $k(L;Q_0,Q_1)=0$, and there are no connecting orbits with energy less than $\frac 12$.

We start considering the 1-form $2\theta$ on $\HH$ and the associated magnetic Lagrangian $L(q,v)=\frac 12|v|_q^2 + 2\theta_q(v)$; here $\theta$ is the canonical
primitive of the standard area form in $\HH$. It is easy to see that $c_u(L)=2$. 
Now let $Q_0$ be a rounded up rectangle in $\HH$; to fix the notation say that the vertical sides of $Q_0$ have $x=a$ and $x=b$ respectively. Moreover, fix $c<d$,
with $c,d\in [a,b]$, such that 
all orbits with energy $k\leq \frac 12$ and starting on the vertical sides stay in the region $\{x\notin [c,d]\}$. Up to increasing the length of the horizontal sides of $Q_0$ one 
sees that $c$ and $d$ actually exist. Take now a proper subinterval $[e,f]\subset [c,d]$ and let $\phi:\R\rightarrow [1,2]$ be a smooth function with the following properties:
\begin{itemize}
\item $\phi\equiv 2$ on $\R\setminus (c,d)$,
\item $\phi\equiv 1$ on $[e,f]$,
\item $\phi$ decreasing on $[c,e]$ and increasing on $[f,d]$.
\end{itemize}

Now consider $\theta'=\phi(x)\cdot \theta$, the associated magnetic Lagrangian $L'$ and set $Q_1$ to be any circle intersecting $Q_0$ and contained in 
the region $\{x\in [e,f]\}$ (see Figure \ref{figure5} below). Take $C\subset C'\subset \HH$ sufficiently large compact sets 
as in Figure \ref{figure5} and such that $C$ contains loops with negative $(L'+\frac 32)$-action (observe that this is possible since $c_u(L')=2$). 
Finally, using a suitable cut-off function, define from $\theta'$ a new 1-form $\theta''$ on $\HH$ such that $\theta''\equiv\theta'$ on $C$ and $\theta''\equiv0$ outside $C'$,
and consider the associated magnetic Lagrangian $L''$. 

Embedding this example into any surface $\Sigma$ as done at the beginning of this section yields 
a magnetic Lagrangian $L:T\Sigma\rightarrow \R$ and intersecting submanifolds $Q_0$,$Q_1$ such that: 
\begin{itemize}
\item $c_u(L)\in (\frac 32,2]$ and $k_{Q_0\cap Q_1}=\frac 12$. 
\item For almost every $k\in (\frac 12, c_u(L))$ there is a connecting orbit with energy $k$ by Theorem \ref{teosubcriticalintro}.
\item For every $k<\frac 12$ there are no connecting orbits with energy $k$. Indeed, since $|\mathcal P_0 w_q|\geq 1$ on the horizontal edges of 
$Q_0$, by the obstruction \eqref{necessarylag} such connecting orbits should start from the vertical edges of $Q_0$. However, by construction the 
orbits starting from the vertical edges of $Q_0$ do not intersect the region $\{x\in [e,f]\}$ and, hence, they cannot be connecting orbits. 
\item $k(L;Q_0,Q_1)=0$. In particular, the condition $k>k(L;Q_0,Q_1)$ 
is not sufficient to guarantee the existence of connecting orbits. 
\end{itemize}

\begin{figure}[h]
\begin{center}
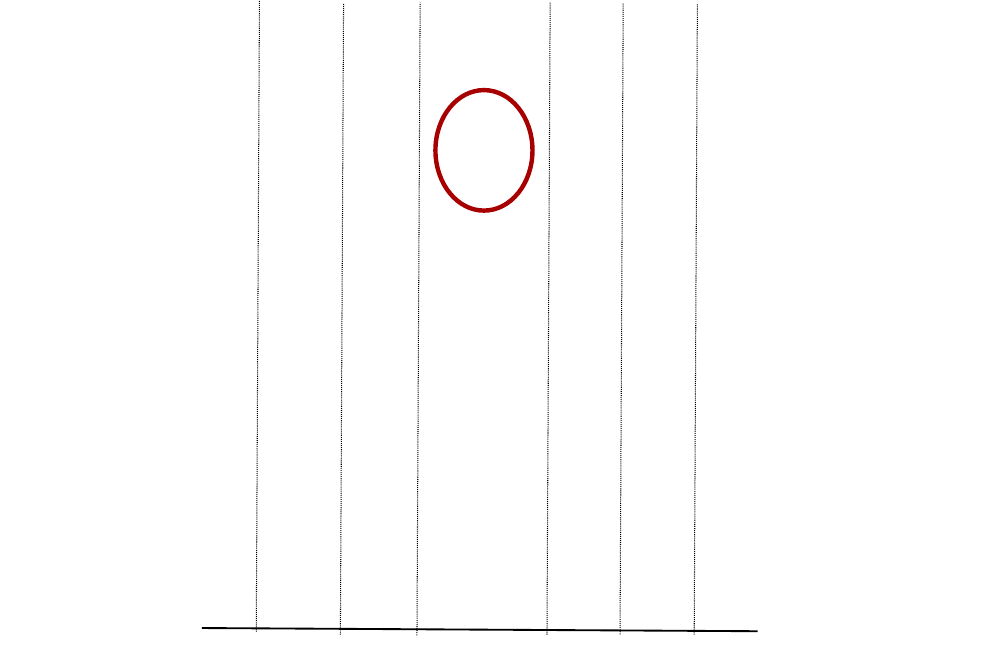
\end{center}
\caption{Theorem \ref{teosubcriticalintro} is sharp.}
\label{figure5}
\end{figure}

%We can give an equivalent counterexample on the two-torus. Indeed, consider the two-torus obtained by identifying the sides of $[0,a]\times [0,1]\subseteq \R^2$, where 
%$a>0$ is a number to be determined. Let $\psi:[0,1]\rightarrow \R$ be a function obtained from $y\mapsto 1-|y-\frac 12|$ by changing it a bit at $y=0,\frac 12,1$ so 
%that it becomes a smooth periodic function on $\R$. Consider now a function $\phi:[0,a]\rightarrow [1,b]$, with $b$ to be determined, such that:
%\begin{itemize}
%\item  $\phi$ is constant and equal to $b>1$ on a neighborhood 
%of $0$ and $a$,
%\item $\phi$ is constant and equal to $1$  on a neighborhood of $\frac a2$,
%\item $\phi$ is decreasing on $[0,\frac a2]$ and increasing on $[\frac a2,a]$. 
%\end{itemize}
%
%Now consider the 1-form $\theta:=\phi(x)\psi(y)dx$ and the associated Lagrangian $L$. As $Q_0$ take $\{y=\frac 14\}$, perturbed a little around $x=0$ and $x=a$ so 
%that it has points with vertical tangent, and as $Q_1$ take $\{x=\frac a2\}$. By choosing $a$ and $b$ large enough one sees that also this shows the sharpness of Theorem 
%\ref{teosubcritical}; observe indeed that 
%$$\frac 12 = k_{Q_0\cap Q_1} < c(L;Q_0,Q_1)=c(L) \in \left [\frac 12 \left (\frac{\int_0^a \phi(x)\, dx}{a}\right )^2, \frac{b^2}{2}\right ].$$

\vspace{3mm}

\textit{Acknowledgments.} I warmly thank Alberto Abbondandolo for drawing my attention on the subject. I am also grateful to the anonimous referee for his very careful reading of the draft and 
for his precious suggestions which helped me to improve essentially the paper. The author is partially supported by the DFG grant AB 360/2-1 ``Periodic orbits
of conservative systems below the Ma\~n\'e critical energy value''. 

\bibliographystyle{amsplain}

%%%%%%%%%%%%%%%%%%%%%%%%%%%%%%%%%%%%%%%%%%%%%%%%%%%%%%%%%%%%%%%%%%%%%%%%%%%%%%%%

\end{document}